\documentclass[amsart,10pt]{article}

\usepackage[all]{xy}
\usepackage{enumitem}
\usepackage{amsmath}
\usepackage{tikz-cd}
\usepackage{mathrsfs}
\usepackage{amssymb}
\usepackage{amsbsy}
\usepackage{amsthm}
\usepackage{graphicx}
\usepackage{xypic}

\usepackage[textwidth=13cm,textheight=21cm]{geometry}

\usepackage{cite}
\usepackage{color}
\usepackage{yfonts}
\usepackage{multirow}
\usepackage{makeidx}
\usepackage[toc]{glossaries}
\usepackage{glossary-mcols}
\usepackage{chngcntr}
\usepackage{apptools}
\usepackage{hyperref}
\usepackage[toc,page]{appendix}

\AtAppendix{\counterwithin{exam}{chapter}}
\AtAppendix{\counterwithin{lem}{chapter}}
\AtAppendix{\counterwithin{prop}{chapter}}
\AtAppendix{\counterwithin{theo}{chapter}}
\AtAppendix{\counterwithin{definition}{chapter}}
\AtAppendix{\counterwithin{cortheo}{chapter}}
\AtAppendix{\counterwithin{cor}{chapter}}

\makeglossaries

\DeclareUnicodeCharacter{FE20}{\`}
\DeclareUnicodeCharacter{FE21}{\`}

\usepackage[utf8]{inputenc}

\usepackage[english]{babel}

\numberwithin{equation}{section}

\newtheorem{prop}{Proposition}[section]
\newtheorem{lem}{Lemma}[section]
\newtheorem{theo}{Theorem}[section]

\newtheorem{cor}{Corollary}[section]

\theoremstyle{definition}
\newtheorem*{claim}{Claim}
\newtheorem{exam}{Example}[section]
\newtheorem{definition}{Definition}[section]

\theoremstyle{remark}
\newtheorem{remark}{Remark}[section]

\title{An integral model of the perfectoid modular curve}
\date{}
\author{Juan Esteban Rodr\'iguez Camargo}

\newcommand{\bb}[1]{\mathbb{#1}}
\newcommand{\n}[1]{\mathcal{#1}}
\newcommand{\f}[1]{\mathfrak{#1}}
\newcommand{\s}[1]{\mathscr{#1}}

\DeclareMathOperator{\GL}{GL}
\DeclareMathOperator{\Tr}{Tr}
\DeclareMathOperator{\id}{id}
\DeclareMathOperator{\SL}{SL}

\DeclareMathOperator{\R}{\mathrm{R}}
\DeclareMathOperator{\D}{\mathrm{D}}
\DeclareMathOperator{\N}{N}

\DeclareMathOperator{\Hom}{Hom}

\DeclareMathOperator{\dotimes}{\otimes^{\mathrm{L}}}

\DeclareMathOperator{\coker}{coker}

\DeclareMathOperator{\Gal}{Gal}

\DeclareMathOperator{\Spf}{Spf}
\DeclareMathOperator{\Spec}{Spec}
\DeclareMathOperator{\Ell}{Ell}

\DeclareMathOperator{\Spa}{Spa}

\DeclareMathOperator{\Cusps}{Cusps}
\DeclareMathOperator{\Tate}{Tate}
\DeclareMathOperator{\HomSurj}{HomSurj}
\DeclareMathOperator{\Art}{Art}

\DeclareMathOperator{\Intperf}{\f{I}nt-\f{P}erf}
\DeclareMathOperator{\Perf}{\s{P}erf}

\newcommand{\mc}[1]{X(Np^{#1})}
\newcommand{\fmc}[1]{\mathfrak{X}(Np^{#1})}

\begin{document}
\maketitle

\begin{abstract}
We construct an  integral model of the perfectoid modular curve.  Studying this object, we prove some vanishing results for the coherent cohomology at perfectoid level.   We use a local duality theorem at finite level to compute duals for the coherent cohomology of the  integral perfectoid curve.  Specializing to the structural sheaf,  we can describe  the dual of the completed cohomology as the inverse limit of the integral cusp forms of weight $2$ and trace maps. 
\end{abstract}

\section*{Introduction}

Throughout this document we fix a prime number $p$, $\bb{C}_p$ the $p$-adic completion of an algebraic closure of $\bb{Q}_p$,  and $\{\zeta_{m}\}_{m\in \bb{N}}\subset \bb{C}_p$ a compatible system of primitive roots of unity.  Given a non-archimedean field $K$ we let $\n{O}_K$ denote its valuation ring.  We let $\overline{\bb{F}}_p$ be the residue field of $\n{O}_{\bb{C}_p}$  and $\breve{\bb{Z}}_p=W(\overline{\bb{F}}_p)\subset \bb{C}_p$ the ring of  Witt vectors.     Let $\bb{Z}_p^{cyc}$ and $\breve{\bb{Z}}_p^{cyc}$ denote the $p$-adic completions of the $p$-adic cyclotomic extensions of $\bb{Z}_p$ and $\breve{\bb{Z}}_p^{cyc}$ in $\bb{C}_p$ respectively.

Let $M\geq 1$ be an integer and $\Gamma(M)\subset \GL_2(\bb{Z})$ the principal congruence subgroup of level $M$. We fix $N\geq 3$ an integer prime to $p$.  For $n\geq 0$ we denote by $Y(Np^n)/ \Spec \bb{Z}_p$ the integral modular curve of level $\Gamma(Np^n)$ and $X(Np^n)$ its compactification,  cf. \cite{katz1985arithmetic}.   We denote by $\f{X}(Np^n)$ the completion of $X(Np^n)$ along its special fiber, and by $\n{X}(Np^n)$ its analytic generic fiber seen as an adic space over $\Spa(\bb{Q}_p, \bb{Z}_p)$,  cf. \cite{MR1734903}. 

In \cite{scholzeTorsCoho2015},  Scholze constructed the perfectoid modular curve of tame level $\Gamma(N)$. He proved that there exists a  perfectoid space $\n{X}(Np^\infty)$, unique up to a unique isomorphism, satisfying the tilde limit property 
\[
\n{X}(Np^\infty)\sim \varprojlim_n \n{X}(Np^n),
\]
see definition 2.4.1 of \cite{scholze2013moduli} and definition 2.4.2 of \cite{MR1734903}. 

The first result of this paper is the existence of a Katz-Mazur integral model of the perfectoid modular curve.  More precisely, we  prove the following theorem, see Section \ref{Proof Theorem} for the notion of a perfectoid formal scheme  
\begin{theo}
\label{Theo1Intro}
The inverse limit $\f{X}(Np^\infty)= \varprojlim_n \f{X}(Np^n)$ is  a formal perfectoid scheme over $\Spf \bb{Z}_p^{cyc}$ whose analytic generic fiber is naturally isomorphic to the perfectoid modular curve $\n{X}(Np^\infty)$.  
\end{theo}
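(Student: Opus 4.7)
The plan is to verify the formal perfectoid property locally on an explicit affine formal cover of $\f{X}(N)$, and separately to identify the analytic generic fiber using the uniqueness clause in Scholze's tilde--limit construction. Being formal perfectoid is local on the target, so it suffices to cover $\f{X}(N)$ by finitely many formal opens $\Spf R_0$ such that the preimage in $\f{X}(Np^\infty)$ is affine of the form $\Spf \widehat{R}_\infty$, where $\widehat{R}_\infty$ is the $p$-adic completion of $\varinjlim_n R_n$ and $R_n$ is the ring of functions on the preimage at level $Np^n$; one then needs $\widehat{R}_\infty$ to be integral perfectoid over $\bb{Z}_p^{cyc}$.

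I would cover $\f{X}(N)$ by three types of regions: (a) the ordinary locus, (b) formal neighborhoods of the finitely many supersingular points of the special fiber, and (c) formal neighborhoods of the cusps. On (a), I work in the anticanonical tower: the canonical subgroup trivializes the connected--étale sequence of $E[p^n]$ and splits the $\Gamma(Np^n)$--structure into a multiplicative and an étale factor; the transition maps modulo $p$ are then essentially Frobenius on the Igusa cover, so $R_\infty/p$ is semiperfect. Together with the $p$--power roots of unity in $\bb{Z}_p^{cyc}$ this yields the integral perfectoid property. On (c) the Tate curve supplies an explicit local parameter $q$: the $\Gamma(Np^n)$--structure adjoins $\zeta_{Np^n}$ and $q^{1/Np^n}$, so the inverse limit has the form $\Spf \bb{Z}_p^{cyc}[[q^{1/p^\infty}]]$, which is manifestly integral perfectoid.

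The main obstacle is region (b). Here Serre--Tate identifies the formal completion of $\f{X}(Np^n)$ at a supersingular point with a Katz--Mazur level cover of the Lubin--Tate deformation space of the underlying supersingular elliptic curve, i.e.\ with a moduli of Drinfeld $\Gamma(p^n)$--structures on the universal height--$2$ formal group over $\breve{\bb{Z}}_p[[u]]$. Weinstein's theorem gives that in the analytic generic fiber this tower is perfectoid at infinite level; the task is to upgrade to an integral perfectoid statement on the formal side. I would do so by writing out the Katz--Mazur presentation in Drinfeld coordinates, verifying that passing to the limit adjoins $p$--power roots of the Lubin--Tate parameter $u$ and that Frobenius on $R_\infty/p$ is surjective, and concluding integral perfectoidness. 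The delicate point is that Drinfeld level structures are not given by honest torsion sections of the formal group until one inverts $p$, so the argument must be carried out directly at the level of the formal model, using the explicit equations developed in \cite{katz1985arithmetic}.

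For the generic fiber, the compatible morphisms $\n{X}(Np^n)\to(\f{X}(Np^n))_\eta$ assemble into a morphism from $\n{X}(Np^\infty)$ to the analytic generic fiber of $\f{X}(Np^\infty)$. The local charts from the previous steps show that this generic fiber is a perfectoid space, and since taking analytic generic fibers commutes with the inverse limits of affine formal schemes appearing in those charts, the tilde--limit property with respect to $\{\n{X}(Np^n)\}_n$ is automatic. By the uniqueness clause of Scholze's construction in \cite{scholzeTorsCoho2015}, this morphism is the desired isomorphism.
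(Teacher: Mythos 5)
The central gap is in your covering strategy: formal neighborhoods of supersingular points are \emph{not} Zariski opens of $\f{X}(N)$, so the three ``regions'' you list do not form a cover by formal open subschemes. The $\Spf$ of the completed local ring at a supersingular point is the formal completion at a closed point, and no Zariski open of the special fiber isolates a supersingular point from nearby ordinary ones. Hence ``local on the target'' in the sense of Zariski opens does not get you to the three case distinctions you want. What actually makes the reduction work in the paper is faithfully flat descent: one takes an \emph{arbitrary} affine open $\Spec R\subset X(Np^{n_0})$, base changes to $\breve{\bb{Z}}_p$, and observes that Frobenius surjectivity modulo $p$ can be tested after the faithfully flat map to the product of completed local rings at all $\overline{\bb{F}}_p$-points. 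It is only after this descent step (together with a finiteness-of-Frobenius argument to identify the relevant base change of $\widehat{S_{n,x_n}}/p$) that the three-way case analysis by ordinary points, cusps, and supersingular points becomes available. Your instinct to treat exactly these three kinds of points is correct, but the mechanism getting you there must be fpqc descent, not a Zariski cover.

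Two smaller remarks. On the supersingular locus, invoking Weinstein's theorem is not the right tool: as the paper's remark following Corollary~\ref{corsupersingfrob} points out, Weinstein's result concerns the $\f{m}_{x_0}$-adic completion of $\varinjlim_n A_{x_n}$, whereas the integral model requires the stronger statement about the $p$-adic completion, and this is proved directly by exhibiting the Verschiebung factorization (\ref{verschpowerseries}) and checking that the generators $T(P^{(n)}_{univ})$, $T(Q^{(n)}_{univ})$ of the maximal ideal become $p$-th powers one level up. On the ordinary locus, your route via canonical subgroups and the anticanonical/Igusa picture is a legitimate alternative to the paper's Serre--Tate computation, which gives the same formula $(1+T_{n+1})^p=1+T_n$ more directly from Proposition~\ref{theodefringord}; both buy you the same Frobenius surjectivity modulo $p$, but the Serre--Tate argument is more elementary and fits cleanly into the completed-local-ring framework required by the descent step. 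Finally, note that the integral perfectoid condition requires the Frobenius $R/\pi\to R/\pi^p$ to be an \emph{isomorphism}, not merely surjective; the injectivity input (that $\widehat{R}_\infty$ is integrally closed in $\widehat{R}_\infty[1/p]$ and equals the power-bounded subring) should be flagged, and in the paper it is established before the descent argument even begins.
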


 The integral perfectoid modular curve $\f{X}(Np^\infty)$ was previously constructed by Lurie in \cite{JacobFullLevel2019},  his method reduces the proof of perfectoidness to the ordinary locus via a mixed characteristic version of Kunz Theorem.  The strategy in this paper is more elementary: we use faithfully flat descent to deduce perfectoidness of $\f{X}(Np^\infty)$ from the  description of  the stalks at the $\overline{\bb{F}}_p$-points.    Then,  we deal with three different kind of points: 
 \begin{itemize}
 \item  The ordinary points where we use the Serre-Tate parameter to explicitly compute the deformation rings, cf.   \cite[\S 2]{katz1981serre}. 
 \item The cusps where we have explicit descriptions provided by the Tate curve,   cf.  \cite[\S  8-10]{katz1985arithmetic}.  
 \item The supersingular points where even though we do not compute explicitly the stalk,  one can proves that the Frobenius map is surjective modulo $p$.   
\end{itemize} 
  It worth to mention that the study of the ordinary locus in Lurie's approach and the one presented in this document are very related,   see  Proposition  2.2  of \cite{JacobFullLevel2019} and  Proposition \ref{theodefringord} down below.   
 
As an application of the integral model  we can prove vanishing results for the coherent cohomology of the  perfectoid modular curve.  Let $E^{sm}/X(N)$ be the semi-abelian scheme extending the universal elliptic curve over $Y(N)$,  cf. \cite{deligne1973schemas}. Let $e: X(N)\rightarrow E^{sm}$ be the unit section and $\omega_E= e^* \Omega^1_{E^{sm}/X(N)}$ the sheaf of invariant differentials.  For $n\geq 0$ we denote by $\omega_{E,n}$ the pullback of $\omega_E$ to $X(Np^n)$, and $D_n\subset X(Np^n)$ the reduced cusp divisor.  Let $k\in \bb{Z}$, we denote $\omega^{k}_{E,n}= \omega^{\otimes k}_{E,n}$ and $\omega^{k}_{E,n,cusp}= \omega^k_{E,n}(-D_n)$. 
 
 Let $\omega^k_{E,\infty}$ be the pullback of $\omega_E^k$ to $\f{X}(Np^\infty)$,  and $\omega^k_{E,\infty, cusp}$ the $p$-adic completion of the direct limit of the cuspidal modular sheaves $\omega^k_{E,n,cusp}$.   In the following we consider almost mathematics with respect to the maximal ideal of $\bb{Z}_p^{cyc}$. 
 
 \begin{theo}
 \label{Theo2Intro}
 Let $\n{F}= \omega^k_{E,\infty}$ or $\omega^k_{E,\infty,cusp}$  and  $\n{F}_{\mu}^+=\n{F}\widehat{\otimes}_{\s{O}_{\f{X}(Np^\infty)}} \s{O}^+_{\n{X}(Np^\infty)}$.   There is an almost quasi-isomorphism of complexes 
 \[
 \R \Gamma_{an}(\n{X}(Np^\infty), \n{F}^+_{\mu}) \simeq^{ae} \R \Gamma(\f{X}(Np^\infty),  \n{F}).
 \] 
 Moreover, the following holds
 \begin{enumerate}
 
\item The cohomology complex $\R \Gamma(\f{X}(Np^\infty), \n{F})$ is concentrated in degree $0$ if $k>0$, degree $[0,1]$ for $k=0$, and degree $1$ if $k<0$. 

\item For $k\in \bb{Z}$ and $i,s\geq 0$, we have $\mathrm{H}^i(\f{X}(Np^\infty),  \n{F})/p^s = \mathrm{H}^i(\f{X}(Np^\infty),  \n{F}/p^s)$ and \newline $\mathrm{H}^i(\f{X}(Np^\infty),  \n{F})= \varprojlim_s \mathrm{H}^i(\f{X}(Np^\infty),  \n{F}/p^s)$. 

\item  The cohomology groups $\mathrm{H}^i(\f{X}(Np^\infty),  \n{F})$ are torsion free. 
\end{enumerate}
 \end{theo}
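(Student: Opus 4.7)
The plan is to reduce all four claims to a good finite affine open cover of $\f{X}(Np^\infty)$ provided by Theorem \ref{Theo1Intro}. I would start by fixing a finite affine open cover $\{U_i\}_{i=1}^r$ of $\f{X}(N)$ chosen so that $\omega_E$ is trivial on each $U_i$, and pulling it back along $\f{X}(Np^\infty)\to\f{X}(N)$ to an affine cover $\{U_{i,\infty}=\Spf A_i\}$ of $\f{X}(Np^\infty)$. By Theorem \ref{Theo1Intro} every $A_i$, together with each intersection $A_{i_1\cdots i_k}$, is a perfectoid $\bb{Z}_p^{cyc}$-algebra whose analytic generic fibre $V_{i_1\cdots i_k}$ is an affinoid perfectoid open of $\n{X}(Np^\infty)$. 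Scholze's almost vanishing theorem for $\s{O}^+$ on affinoid perfectoid spaces then gives $\mathrm{H}^j_{an}(V_{i_1\cdots i_k},\s{O}^+)\simeq^{ae} A_{i_1\cdots i_k}$ for $j=0$ and almost zero otherwise. Transporting this identification to $\omega^k_{E,\infty}$ via the chosen trivializations (and, for the cuspidal variant, identifying the cusp divisor inside each $U_{i,\infty}$ using the Tate curve coordinates) and assembling the result via Čech cohomology yields the desired almost quasi-isomorphism $\R\Gamma_{an}(\n{X}(Np^\infty),\n{F}^+_{\mu})\simeq^{ae}\R\Gamma(\f{X}(Np^\infty),\n{F})$.

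The key observation for parts (2) and (3) is that the right-hand side is computed by the \emph{same} bounded Čech complex $C^\bullet(\{U_{i,\infty}\},\n{F})$, which sits in degrees $[0,r-1]$ and whose terms are $p$-adically complete and $p$-torsion free (since $\n{F}$ is a line bundle and each $A_{i_1\cdots i_k}$ is $p$-torsion free by perfectoidness). For such a bounded complex of $p$-torsion free modules, both $-\otimes^{\mathrm{L}}\bb{Z}/p^s$ and $\R\varprojlim_s$ commute with taking cohomology, so the mod $p^s$ identity, the inverse limit formula, and the $p$-torsion freeness of each $\mathrm{H}^i(\f{X}(Np^\infty),\n{F})$ follow formally.

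For part (1), the upper bound on cohomological dimension is immediate: two affines suffice to cover a projective curve, so cohomology lives in degrees $[0,1]$. For $k<0$ the vanishing of $\mathrm{H}^0$ reduces, via the inverse-limit formula of part (2) together with the comparison to each finite level $\f{X}(Np^n)$, to $\mathrm{H}^0(X(Np^n),\omega^k_{E,n})=0$, which holds because there are no nonzero modular forms of negative weight over a $p$-torsion free base. For $k>0$, I would identify $\mathrm{H}^1$ at each finite level via Serre duality with the dual of $\mathrm{H}^0(X(Np^n),\omega^{2-k}_{E,n}\otimes\Omega^1_{X(Np^n)})$, and use the Kodaira--Spencer isomorphism $\Omega^1_{X(Np^n)}\simeq\omega^2_{E,n}(-D_n)$ to rewrite this as a space of weight $2-k$ cusp forms, which vanishes for $k\geq 2$. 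The remaining case $k=1$ is the main obstacle, since the contributing weight-one cusp forms do not vanish at any finite level; I expect this to require a genuinely perfectoid input, exploiting the trace maps up the tower (or the $U_p$-operator at infinite level, analogous to the one that will be used in the application to the dual of completed cohomology) to show that the $p$-adic completion of the corresponding direct limit is annihilated.
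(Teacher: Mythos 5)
Your Čech-complex framework for the comparison isomorphism is essentially the one the paper uses (Corollary \ref{CoroCoherentPerfCurve}), and you correctly flag that the $k=1$ case needs a "genuinely perfectoid input." But there are two concrete problems.

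First, parts (2) and (3) are \emph{not} formal consequences of computing everything by a bounded Čech complex with $p$-torsion-free, $p$-complete terms. A bounded complex of $p$-torsion-free modules can have $p$-torsion cohomology — take $\bb{Z}\xrightarrow{p}\bb{Z}$, whose $\mathrm{H}^1$ is $\bb{Z}/p$, and for which $\mathrm{H}^1\otimes\bb{Z}/p\to\mathrm{H}^1(\,\cdot\,\otimes\bb{Z}/p)$ is not an isomorphism because of the $\mathrm{Tor}_1$ correction. Likewise $\mathrm{H}^i(\varprojlim_s C^\bullet/p^s)=\varprojlim_s\mathrm{H}^i(C^\bullet/p^s)$ requires a Mittag-Leffler hypothesis on $\{\mathrm{H}^{i-1}(C^\bullet/p^s)\}_s$, which must be verified. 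In the paper these statements are \emph{derived from} part (1), not the other way around: once one knows the cohomology is concentrated in a single degree (for $k\ne0$), the long exact sequence for $\cdot\xrightarrow{p^s}\cdot$ immediately gives torsion-freeness and the mod-$p^s$ identity; and for $k=0$ the explicit computation $\mathrm{H}^0(\f{X}_\infty,\s{O}_{\f{X}_\infty}/p^s)=\n{O}^{cyc}/p^s$ gives Mittag-Leffler by hand. So the logical order of your argument must be reversed.

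Second, in part (1) you claim the finite-level Serre-duality argument handles all $k\ge2$; this is wrong for the cuspidal sheaf at $k=2$, where the dual is $\mathrm{H}^0(X_n,\s{O}_{X_n})\ne0$, so $\mathrm{H}^1(X_n,\omega^2_{E,n,cusp})\ne0$ at every finite level. Thus $k=1$ and $k=2$ (cuspidal) both genuinely require the perfectoid structure, and your proposed mechanism — normalized traces or a $U_p$-operator at infinite level — is not how the paper proceeds, nor do I see a direct way to make it work (the duals of the pullback transition maps are traces, and showing that every weight-one mod-$p$ cusp form dies under normalized trace at some finite level is not obviously tractable). The decisive input in the paper is the Frobenius on the \emph{integral} perfectoid model: $F:\f{X}_\infty/p\xrightarrow{\sim}\f{X}_\infty/p^{1/p}$ is an isomorphism of ringed spaces with $F^*(\omega^k_{E,\infty}/p^{1/p})\cong\omega^{pk}_{E,\infty}/p$ (and $F^*D_n = D_{n-1}$), which transports the weight-$k$ question to weight $pk$, where finite-level Serre duality does vanish. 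That Frobenius trick is exactly the payoff of having built the integral perfectoid model in Theorem~\ref{Theo1Intro}, and it is the piece your proposal is missing.
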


Next,  we use Serre duality and Pontryagin duality to construct a local duality theorem for the modular curves at finite level.   In the limit one obtains the following theorem 
\begin{theo}
\label{Theo3Intro}
Let $\f{X}_n$ be the connected component of $\f{X}(Np^n)_{\breve{\bb{Z}}_p}$ defined as the locus where the Weil pairing of the universal basis of $E[N]$ is equal to $\zeta_{N}$.    We denote $\f{X}_{\infty}= \varprojlim_n \f{X}_n$.  Let $\n{F}= \omega^k_{E,\infty}$ or $\omega^k_{E,\infty, cusp}$ and   $\n{F}_n= \omega^k_{E,n}$ or $\omega^k_{E,n,cusp}$   respectively.  There is a natural $\GL_2(\bb{Q}_p)$-equivariant isomorphism 
\[
\Hom_{\breve{\bb{Z}}_p^{cyc} }(\mathrm{H}^i(\f{X}_{\infty},  \n{F}), \breve{\bb{Z}}_p^{cyc}) = \varprojlim_{n,  \widetilde{\Tr}_n} \mathrm{H}^{1-i}(X_n, \n{F}^\vee_n \otimes \omega^2_{E,n,cusp}), 
\]
where the transition maps in the RHS are given by normalized traces, and $\n{F}_n^\vee$ is the dual sheaf of $\n{F}_n$. 
\end{theo}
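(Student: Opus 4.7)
The plan is to combine Grothendieck--Serre duality at each finite level with the limit-compatibility of cohomology furnished by Theorem \ref{Theo2Intro}. First I would unwind both sides as iterated limits modulo $p^s$. Since parts (2) and (3) of Theorem \ref{Theo2Intro} show that $\mathrm{H}^i(\f{X}_\infty, \n{F})$ is $p$-adically complete and $p$-torsion free with $\mathrm{H}^i(\f{X}_\infty, \n{F})/p^s = \mathrm{H}^i(\f{X}_\infty, \n{F}/p^s)$, $\breve{\bb{Z}}_p^{cyc}$-duality converts the left-hand side into
\[
\varprojlim_s \Hom_{\breve{\bb{Z}}_p^{cyc}/p^s}(\mathrm{H}^i(\f{X}_\infty, \n{F}/p^s), \breve{\bb{Z}}_p^{cyc}/p^s).
\]
Since $\f{X}_\infty = \varprojlim_n \f{X}_n$ as formal schemes, one has $\s{O}_{\f{X}_\infty}/p^s = \varinjlim_n \s{O}_{\f{X}_n}/p^s$; on the qcqs space $\f{X}_\infty$ cohomology commutes with this filtered colimit, so $\mathrm{H}^i(\f{X}_\infty, \n{F}/p^s) = \varinjlim_n \mathrm{H}^i(\f{X}_n, \n{F}_n/p^s)$, and dualizing converts the colimit into an inverse limit in $n$.

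Second, I would apply Grothendieck--Serre duality for the smooth proper relative curve $X_n/\breve{\bb{Z}}_p$. Kodaira--Spencer identifies its relative dualizing sheaf with $\omega^2_{E,n,cusp}$, so for the locally free coherent sheaf $\n{F}_n/p^s$ the duality yields a natural isomorphism
\[
\Hom_{\breve{\bb{Z}}_p/p^s}(\mathrm{H}^i(X_n, \n{F}_n/p^s), \breve{\bb{Z}}_p/p^s) \cong \mathrm{H}^{1-i}(X_n, \n{F}_n^\vee \otimes \omega^2_{E,n,cusp}/p^s).
\]
Base changing to $\breve{\bb{Z}}_p^{cyc}$, collapsing $\varprojlim_s$ using the $p$-adic completeness of the right-hand cohomology at finite level, and combining with the preceding step recovers the claimed description. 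Note the two cases ($\n{F}_n$ cuspidal or not) are exchanged by this duality, consistent with the usual self-duality between modular forms and cusp forms.

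The main obstacle is identifying the transition maps precisely. The dual under Serre duality of the pullback $\mathrm{H}^i(\f{X}_n, \n{F}_n) \to \mathrm{H}^i(\f{X}_{n+1}, \n{F}_{n+1})$ is the Grothendieck trace attached to the finite cover $X_{n+1} \to X_n$, while the statement asks for the \emph{normalized} trace $\widetilde{\Tr}_n$. The two differ by a power of $p$ because $X_{n+1} \to X_n$ is generically étale of $p$-power degree but integrally ramified along the supersingular locus --- the same phenomenon that makes the perfectoidness argument of Theorem \ref{Theo1Intro} subtle there. Pinning down the exact normalization, and verifying $\GL_2(\bb{Q}_p)$-equivariance (which should follow from the naturality of Serre duality with respect to Hecke correspondences together with the equivariance of the Kodaira--Spencer isomorphism), is the technical heart of the argument and is what the author alludes to as the ``local duality theorem at finite level'' in the introduction.
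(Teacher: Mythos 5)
The high-level shape of your argument is the same as the paper's: reduce to finite level, apply Serre duality combined with a $p$-adic/Pontryagin duality, and interpret the transition maps as normalized traces. (The paper uses $K/\n{O}$-coefficients and Pontryagin duality in place of your reduction mod $p^s$ followed by $\varprojlim_s$; these are equivalent packagings of the same step.) However, there are two real problems in the technical heart you explicitly flag as unresolved, and your sketch of the resolution is wrong.

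First, $X_n$ is \emph{not} smooth over $\n{O}_n$ (nor over $\breve{\bb{Z}}_p$) for $n\geq 1$: its special fiber is a union of Igusa curves crossing at the supersingular points. The curve is only regular, hence lci, and one must use duality for lci projective curves (Proposition \ref{LocalDualityCurves} in the paper), not for smooth ones. As a consequence, the Kodaira--Spencer isomorphism $\omega^2_{E,cusp}\cong\Omega^1_{X/\n{O}}$ identifies the dualizing sheaf with cusp forms only at tame level; at level $\Gamma(Np^n)$ the identification $\omega^\circ_n\cong\omega^2_{E,n,cusp}$ is a nontrivial statement (Proposition \ref{isoDualizingModforms}), proved by showing that the normalized trace $\frac{1}{p}\Tr_n$ gives an isomorphism $\s{O}_{X_n}(D_{n-1}-D_n)\cong p_n^!\s{O}_{X'_{n-1}}$ via a local computation of the different at ordinary points and cusps.

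Second, your explanation of where the normalization comes from is incorrect. You attribute the discrepancy to ramification ``along the supersingular locus,'' but the supersingular locus is of codimension $2$ and is in fact discarded in the relevant computation. The map $X_n\to X_{n-1}$ is \'etale on the generic fiber but its \emph{entire} special fiber is a Frobenius-type cover: the different ideal at an ordinary $\overline{\bb{F}}_p$-point is $(p)$, which is exactly what the $\frac{1}{p}$ in $\widetilde{\Tr}_n$ cancels (at cusps there is an additional $q$-power correction, which is absorbed by the cuspidal twist $D_{n-1}-D_n$). More to the point, the normalization is not a separate adjustment one applies to the Grothendieck trace after the fact: once one works with the honest dualizing sheaves $\omega^\circ_n=\pi_n^!\n{O}_n$, the dual of pullback \emph{is} the Serre duality trace with no further correction, and $\widetilde{\Tr}_n$ appears only because one \emph{chooses} to transport the statement along the isomorphisms $\xi_n\circ\cdots\circ\xi_1\circ KS$. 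Your proposal leaves this as a ``technical heart'' that you have not actually carried out, and the partial sketch you give of how to carry it out would not succeed.
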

Finally, we specialize to the case $\n{F}= \s{O}_{\f{X}_{\infty}}$  where the completed cohomology appears.  Let $X_n$ be a connected component of $X(Np^n)_{\breve{\bb{Z}}_p}$ as in the previous theorem.  Let $i\geq 0$ and let  $\widetilde{\mathrm{H}}^i= \varprojlim_s \varinjlim_n \mathrm{H}^i_{et}(X_{n,\bb{C}_p},  \bb{Z}/p^s \bb{Z})$ be the completed $i$-th cohomology group,   where $X_{n,\bb{C}_p}= X_n\times_{\Spec \breve{\bb{Z}}_p[\zeta_{p^n}] }\Spec \bb{C}_p$.     Note that this is a slightly different version of Emerton's  completed cohomology \cite{MR2207783},  where one considers the \'etale cohomology with compact supports of $Y_{n,\bb{C}_p}\subset X_{n,\bb{C}_p}$.  Nevertheless, both cohomologies are related via the open and closed immersions $Y_n \subset X_n \supset D_n$.  Following the same ideas of \cite[\S 4.2]{scholzeTorsCoho2015} one can show that $\widetilde{\mathrm{H}}^i\widehat{\otimes}_{\bb{Z}_p} \n{O}_{\bb{C}_p}$ is almost equal to $\mathrm{H}^i_{an}(\n{X}_{\infty, \bb{C}_p},  \s{O}^+_{\n{X}_{\infty}})$,  in particular it vanishes for $i\geq 2$ and $\widetilde{\mathrm{H}}^0= \bb{Z}_p$.  Using the theorem above we obtain the following result
\begin{theo}
\label{Theo4Intro}
There is a $\GL_2(\bb{Q}_p)$-equivariant almost isomorphism of almost $\n{O}_{\bb{C}_p}$-modules 
\[
\Hom_{\n{O}_{\bb{C}_p}}( \widetilde{\mathrm{H}}^1\widehat{\otimes}_{\bb{Z}_p} \n{O}_{\bb{C}_p}, \n{O}_{\bb{C}_p})=^{ae} \varprojlim_{n, \widetilde{\Tr}_n} \mathrm{H}^0(X_{n,\bb{C}_p}, \omega^2_{E,\infty,cusp}). 
\]
\end{theo}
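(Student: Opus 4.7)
The plan is to deduce Theorem \ref{Theo4Intro} as the case $\n{F}=\s{O}_{\f{X}_\infty}$, $i=1$ of Theorem \ref{Theo3Intro}, upgraded from $\breve{\bb{Z}}_p^{cyc}$-coefficients to $\n{O}_{\bb{C}_p}$-coefficients by means of the completed-cohomology comparison recalled just before the statement together with Theorem \ref{Theo2Intro}. Specializing Theorem \ref{Theo3Intro} to $\n{F}=\s{O}_{\f{X}_\infty}$ gives, since $\s{O}^\vee=\s{O}$, the $\GL_2(\bb{Q}_p)$-equivariant identification
\[
\Hom_{\breve{\bb{Z}}_p^{cyc}}(\mathrm{H}^1(\f{X}_\infty,\s{O}_{\f{X}_\infty}),\breve{\bb{Z}}_p^{cyc})=\varprojlim_{n,\widetilde{\Tr}_n}\mathrm{H}^0(X_n,\omega^2_{E,n,cusp}),
\]
so the task reduces to propagating the base change to $\n{O}_{\bb{C}_p}$ through both sides.

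For the source, I would chain the two almost isomorphisms
\[
\widetilde{\mathrm{H}}^1\widehat{\otimes}_{\bb{Z}_p}\n{O}_{\bb{C}_p}=^{ae}\mathrm{H}^1_{an}(\n{X}_{\infty,\bb{C}_p},\s{O}^+_{\n{X}_\infty})=^{ae}\mathrm{H}^1(\f{X}_\infty,\s{O}_{\f{X}_\infty})\widehat{\otimes}_{\breve{\bb{Z}}_p^{cyc}}\n{O}_{\bb{C}_p},
\]
where the first is the variant of Scholze's comparison recalled in the excerpt (via the open--closed d\'evissage along $Y_n\subset X_n\supset D_n$), and the second follows from Theorem \ref{Theo2Intro} with $\n{F}=\s{O}_{\f{X}_\infty}$, base-changed from $\breve{\bb{Z}}_p^{cyc}$ to $\n{O}_{\bb{C}_p}$; the permissibility of this base change is controlled by parts (2) and (3) of Theorem \ref{Theo2Intro}. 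Applying $\Hom_{\n{O}_{\bb{C}_p}}(-,\n{O}_{\bb{C}_p})$ and the $\widehat{\otimes}$--$\Hom$ adjunction then identifies the left-hand side of Theorem \ref{Theo4Intro} with $\Hom_{\breve{\bb{Z}}_p^{cyc}}(\mathrm{H}^1(\f{X}_\infty,\s{O}_{\f{X}_\infty}),\n{O}_{\bb{C}_p})$. For the target, I would use that each $\mathrm{H}^0(X_n,\omega^2_{E,n,cusp})$ is a finite free $\breve{\bb{Z}}_p[\zeta_{p^n}]$-module (finite-dimensional space of weight $2$ cusp forms combined with proper flat base change), so that $\mathrm{H}^0(X_n,\omega^2_{E,n,cusp})\otimes_{\breve{\bb{Z}}_p[\zeta_{p^n}]}\n{O}_{\bb{C}_p}=\mathrm{H}^0(X_{n,\bb{C}_p},\omega^2_{E,n,cusp})$ compatibly with the normalized traces, and then pass to the inverse limit over $n$.

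The main obstacle I expect is ensuring that these base changes commute with the limits inside the almost category. Concretely, one must establish the two almost identifications
\[
\Hom_{\breve{\bb{Z}}_p^{cyc}}(\mathrm{H}^1(\f{X}_\infty,\s{O}),\breve{\bb{Z}}_p^{cyc})\widehat{\otimes}_{\breve{\bb{Z}}_p^{cyc}}\n{O}_{\bb{C}_p}=^{ae}\Hom_{\breve{\bb{Z}}_p^{cyc}}(\mathrm{H}^1(\f{X}_\infty,\s{O}),\n{O}_{\bb{C}_p})
\]
and
\[
\Bigl(\varprojlim_n\mathrm{H}^0(X_n,\omega^2_{E,n,cusp})\Bigr)\widehat{\otimes}_{\breve{\bb{Z}}_p^{cyc}}\n{O}_{\bb{C}_p}=^{ae}\varprojlim_n\mathrm{H}^0(X_{n,\bb{C}_p},\omega^2_{E,n,cusp}).
\]
Both reduce, via Theorem \ref{Theo2Intro}(2)--(3) (derived $p$-adic completeness and torsion-freeness) and the level-wise finiteness, to Mittag-Leffler-type arguments that are clean in the almost category but require careful bookkeeping of the normalized traces and of the $p$-adic completions.
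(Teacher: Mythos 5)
Your proposal takes a genuinely different route from the paper, and in this case the difference is not cosmetic: it introduces a gap that the paper's route was designed to avoid.

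The paper does not base-change the duality isomorphism from $\breve{\bb{Z}}_p^{cyc}$ to $\n{O}_{\bb{C}_p}$. Instead, Theorem~\ref{TheoDualityinfinite} (the body form of Theorem~\ref{Theo3Intro}) is stated and proved for an \emph{arbitrary} non-archimedean extension $C$ of $K^{cyc}$, and the proof of Theorem~\ref{Theo4Intro} simply invokes it with $C=\bb{C}_p$ and combines with Proposition~\ref{PropCompletedCoherent}. The proof of Theorem~\ref{TheoDualityinfinite} works uniformly in $C$ because each ingredient base-changes at finite level: the Serre/Pontryagin pairing of Proposition~\ref{LocalDualityCurves} extends to $X_{n,\n{O}_C}$ by flat base change (Remark~\ref{remark1}), the direct limit identification $\mathrm{H}^i(\f{X}_\infty,\n{F}\otimes K/\n{O})=\varinjlim_n\mathrm{H}^i(X_{n,\n{O}_C},\n{F}_n\otimes K/\n{O})$ holds over $\n{O}_C$, and Pontryagin duality over $\n{O}_C$ then turns the colimit into a limit of the trace system over $\n{O}_C$ directly. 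Crucially, the limit over $n$ is \emph{taken after} extending scalars to $\n{O}_C$.

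Your route takes the limit first (over $\breve{\bb{Z}}_p^{cyc}$) and then tries to extend scalars, and this is where the gap sits. The two identifications you isolate as ``the main obstacle'' are exactly where the argument fails, and you do not supply a proof of them---you say they ``reduce to Mittag-Leffler-type arguments,'' but Mittag-Leffler governs exactness of inverse limits, not commutation of inverse limits with (completed) tensor products or of $\Hom$ with tensor products. Concretely, $\varprojlim_n\mathrm{H}^0(X_{n,\n{O}_{\bb{C}_p}},\omega^2_{E,n,cusp})$ is not in general $(\varprojlim_n\mathrm{H}^0(X_n,\omega^2_{E,n,cusp}))\widehat{\otimes}_{\breve{\bb{Z}}_p^{cyc}}\n{O}_{\bb{C}_p}$: the ranks grow without bound, the limit behaves like an infinite product of finite free modules, and for a set $I$ of infinite cardinality the natural map $\bigl(\prod_{i\in I}\breve{\bb{Z}}_p^{cyc}\bigr)\widehat{\otimes}_{\breve{\bb{Z}}_p^{cyc}}\n{O}_{\bb{C}_p}\to\prod_{i\in I}\n{O}_{\bb{C}_p}$ is not surjective even modulo $p$, since $\n{O}_{\bb{C}_p}/p$ is not finite over $\breve{\bb{Z}}_p^{cyc}/p$. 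The paper itself flags that the limit is not compact (Remark~\ref{RemarkDualityinfty}(3)), so one cannot invoke any compactness to salvage the commutation. The same issue afflicts $\Hom_{\breve{\bb{Z}}_p^{cyc}}(\mathrm{H}^1(\f{X}_\infty,\s{O}),\breve{\bb{Z}}_p^{cyc})\widehat{\otimes}\n{O}_{\bb{C}_p}$ versus $\Hom_{\breve{\bb{Z}}_p^{cyc}}(\mathrm{H}^1(\f{X}_\infty,\s{O}),\n{O}_{\bb{C}_p})$, since $\mathrm{H}^1(\f{X}_\infty,\s{O})$ is an infinite-rank $p$-adically complete module. The fix is not to base-change the end result but to re-run the duality argument of Theorem~\ref{TheoDualityinfinite} with $\n{O}_C=\n{O}_{\bb{C}_p}$ from the outset, which is exactly what the body theorem (and its proof via level-$n$ local duality over $\n{O}_C$) provides. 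Your treatment of the source side via Proposition~\ref{PropCompletedCoherent} and the $\widehat{\otimes}$--$\Hom$ adjunction is fine; it is the target side and the commutation of the $\Hom$ with the extension of scalars that need the paper's approach rather than yours.
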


The outline of the paper is the following.  In Section \ref{RecallsModularCurves} we recall the construction of the integral modular curves at finite level;  they are defined as the moduli space parametrizing elliptic curves endowed with a Drinfeld basis of the torsion subgroups,  we will follow \cite{katz1985arithmetic}.  Then, we study the deformation rings of the modular curves at $\overline{\bb{F}}_p$-points.  For ordinary points we use the Serre-Tate parameter to describe the deformation ring at level $\Gamma(Np^n)$. We show that it represents   the moduli problem  parametrizing deformations of the $p$-divisible group $E[p^\infty]$, and a split of the connected-\'etale short exact sequence 
\[
0\rightarrow  \widehat{E} \rightarrow E[p^\infty] \rightarrow E[p^\infty]^{et}\rightarrow 0. 
\]
For cusps we refer to the explicit computations of \cite[\S 8 and 10]{katz1985arithmetic}. Finally, in the case of a supersingular point we prove that any element of the local deformation ring at level $\Gamma(Np^n)$ admits a $p$-th root   modulo $p$ at level $\Gamma(Np^{n+1})$. 
  
 In Section \ref{Proof Theorem}  we introduce the notion of a  perfectoid formal scheme.  We prove Theorem \ref{Theo1Intro} reducing to the formal deformation rings at $\overline{\bb{F}}_p$-points via faithfully flat descent.  We will say some words regarding Lurie's construction of $\f{X}(Np^\infty)$.  It is worth to mention that the tame level $\Gamma(N)$ is taken only for a more clean exposition,  by a result of Kedlaya-Liu about quotients of perfectoid spaces by finite group actions (Theorem 3.3.26 of  \cite{kedlaya2016relative}),  there are integral models of any tame level.  
 
 In Section \ref{Reminders Duality},  we use Serre and Pontryagin duality to define a local duality pairing for the coherent cohomology of vector bundles over an lci projective curve over a finite extension of $\bb{Z}_p$. 
 
 In Section \ref{COhomology of modular sheaves},   we compute the dualizing complexes of the modular curves at finite level.  We prove the cohomological vanishing of Theorem \ref{Theo2Intro} and its comparison with the cohomology of the perfectoid modular curve.    We prove the duality theorem at infinite level, Theorem \ref{Theo3Intro},  and specialize to $\n{F}=\s{O}_{\f{X}_{\infty}}$ to obtain  Theorem \ref{Theo4Intro}.

\textbf{Acknowledgments.} The construction of the integral  perfectoid modular curve initiated  as a problem of a \textit{m\'emoire} of M2  in 2019.   I should like to convey my special  gratitude to Vincent Pilloni for encouraging me to continue with the study of the coherent cohomology at perfectoid level,   and its application to  the completed cohomology.  I would like to thank George Boxer and Joaquin Rodrigues Jacinto  for  all the fruitful discussions of the subject.  I wish to express  special thanks to the anonymous referee  for the careful proofreading of  this paper, and for the numerous suggestions and corrections  that have notably improved the presentation of this text.    Particularly,   for  the remark  that Theorem \ref{Theo3Intro} holds for all the  sheaves involved and not only for the structural sheaf.     This work has been done while the author was a PhD student at the ENS de Lyon. 

\section{A brief introduction to the Katz-Mazur integral modular curves}

\label{RecallsModularCurves}

Let $N\geq 3$ be an integer prime to $p$ and $n\in \bb{N}$.   Let $\Gamma(Np^n)\subset \GL_2(\bb{Z})$ be  the principal congruence subgroup of level $Np^n$.  
\subsection*{Drinfeld bases}

We recall the definition of a Drinfeld basis for the $M$-torsion of an elliptic curve

\begin{definition}
 Let $M$ be a positive integer,   $S$   a scheme and $E$ an elliptic curve over $S$. A \textit{Drinfeld basis} of $E[M]$ is a morphism of  group schemes $ \psi:(\bb{Z}/M\bb{Z})^2\rightarrow E[M] $ such that  the following equality of effective divisors holds
 \begin{equation}
 \label{eqDrinfeldbasis}
 E[M]=\sum_{(a,b)\in (\bb{Z}/M\bb{Z})^2} \psi(a,b).
 \end{equation}
 We also write $(P,Q)=(\psi(1,0), \psi(0,1))$ for the Drinfeld basis $\psi$. 
\end{definition} 
 
 \begin{remark}
The left-hand-side of (\ref{eqDrinfeldbasis}) is  an effective divisor of $E/S$ being a finite flat group scheme over $S$. The right-hand-side is a sum of effective divisors given by the sections $\psi(a,b)$ of $S$ to $E$. Furthermore, if $M$ is invertible over $S$,   a homomorphism  $\psi: \bb{Z}/M\bb{Z}\rightarrow E[M]$ is a Drinfeld basis if and only if it is an isomorphism of group schemes, cf. \cite[Lem. 1.5.3]{katz1985arithmetic}.
 \end{remark}

\begin{prop}
 Let $E/S$ be an elliptic curve.   Let $(P,Q)$ be a Drinfeld basis of $E[M]$ and $e_M: E[M]\times E[M]\rightarrow \mu_M$ the Weil pairing. Then $e_M(P,Q)\in \mu_M^\times(S)$   is a primitive root of unity , i.e. a root of the $M$-th cyclotomic polynomial. 
\proof
\cite[Theo. 5.6.3]{katz1985arithmetic}. 
\endproof
\end{prop}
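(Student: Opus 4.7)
The plan is to reduce to the universal situation and then run a density argument on the Katz-Mazur moduli scheme. Given any pair $(E/S,\psi)$ with $\psi$ a Drinfeld basis of $E[M]$, there is a classifying morphism $S \to Y(M)$ under which $(E,\psi)$ is pulled back from the universal Drinfeld basis on the universal elliptic curve over $Y(M)$. Since the Weil pairing commutes with base change, it suffices to establish the claim for the universal datum.

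The key geometric input, due to Drinfeld and Katz-Mazur, is that $Y(M)$ is regular (in particular reduced) and flat over $\Spec \bb{Z}$. Consequently every irreducible component of $Y(M)$ dominates $\Spec \bb{Z}$, and the open subscheme $Y(M)[1/M]\subseteq Y(M)$ is schematically dense. The primitivity condition on the global section $e_M(P,Q)\colon Y(M)\to \mu_M$ is the vanishing of $\Phi_M(e_M(P,Q))$, where $\Phi_M$ is the $M$-th cyclotomic polynomial; this cuts out a closed subscheme $Z\subseteq Y(M)$.

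On $Y(M)[1/M]$ the integer $M$ is invertible, so by the remark $\psi$ is an isomorphism of \'etale group schemes and $(P,Q)$ is an honest $(\bb{Z}/M\bb{Z})^2$-basis of $E[M]$. The non-degeneracy and the alternating property of the Weil pairing then force $e_M(P,Q)$ to have exact order $M$: if $e_M(P,Q)^{M/d}=1$ for a proper divisor $d$ of $M$, then $(M/d)P$ would pair trivially with $P$ (by the alternating property) and with $Q$ (by hypothesis), hence with all of $E[M]$, contradicting non-degeneracy of $e_M$ together with the fact that $P$ has exact order $M$. Therefore $Y(M)[1/M]\subseteq Z$, and by schematic density $Z=Y(M)$, which proves the proposition.

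The only delicate point I foresee is invoking the correct representability and regularity statement for $Y(M)$ from \cite{katz1985arithmetic}; once this is granted, the rest is the classical non-degeneracy computation on the $M$-invertible locus plus a one-line density argument. I do not expect any genuinely hard step.
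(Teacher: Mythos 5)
Your proof is correct and is essentially the argument given in the cited reference, \cite[Theo. 5.6.3]{katz1985arithmetic}: reduce to the universal case over $Y(M)$, check the primitivity on the schematically dense open $Y(M)[1/M]$ where the Drinfeld basis is an honest basis and the Weil pairing is perfect, and conclude by regularity (hence reducedness) and $\bb{Z}$-flatness of $Y(M)$. The paper itself only cites Katz--Mazur, so you have simply filled in the standard proof.
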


Let  $M\geq 3$.  From Theorem 5.1.1 and Scholie 4.7.0 of \cite{katz1985arithmetic},   the moduli problem parametrizing elliptic curves $E/S$ and  Drinfeld bases $(P,Q)$ of $E[M]$ is representable by an affine and  regular curve over $\bb{Z}$.  We denote this curve by $Y(M)$ and call it the \textit{(affine) integral modular curve of level $\Gamma(M)$}.  By an abuse of notation, we will  write $Y(M)$ for its scalar extension to $\bb{Z}_p$. 

The $j$-invariant is a finite flat morphism of $\bb{Z}_p$-schemes $j:Y(M)\longrightarrow \bb{A}^1_{\bb{Z}_p}$. The \textit{compactified  integral modular curve of level $\Gamma(M)$}, denoted by $X(M)$, is the normalization of $\bb{P}^1_{\bb{Z}_p}$ in $Y(M)$ via the $j$-invariant. The \textit{cusps} or the \textit{boundary divisor} $D$   is the closed reduced subscheme of $X(M)$  defined by $\frac{1}{j}=0$.  The curve $X(M)$  is projective over $\bb{Z}_p$ and a regular scheme.   
We  refer to $X(M)$ and $Y(M)$ simply as the modular curves of level $\Gamma(M)$.  

Let $E_{univ}/Y(M)$ be the universal elliptic curve and $(P_{univ,M},Q_{univ,M})$  the universal Drinfeld basis of $E_{univ}[M]$. Let $\Phi_{M}(X)$ be the $M$-th cyclotomic polynomial, and let $\bb{Z}_p[\mu^{\times}_{M}]$ denote the ring $\bb{Z}_p[X]/(\Phi_{M}(X))$.  The Weil pairing of $(P_{univ,M},Q_{univ,M})$ induces a morphism of $\bb{Z}_p$-schemes 
\[
e_M:Y(M)\rightarrow \Spec \bb{Z}_p[\mu_{M}^{\times}].
\] 

 The map $e_M$ extends uniquely to a map $e_M: X(M)\rightarrow \Spec \bb{Z}_p[\mu_{M}^\times]$ by normalization.   In addition,  $e_M$ is geometrically reduced, and has geometrically connected fibers.  
 
 Taking $N$ as in the beginning of the section,  and $n\in \bb{N}$ varying,  we construct  the commutative diagram 
\[
\begin{tikzcd}[column sep = small]
\cdots \ar[r] & \mc{n+1}\ar[d] \ar[r] & \mc{n} \ar[d]\ar[r] & \mc{n-1} \ar[r]\ar[d]&  \cdots \\
\cdots \ar[r] & \Spec(\bb{Z}_p[\mu^\times_{Np^{n+1}}]) \ar[r]& \Spec(\bb{Z}_p[\mu^\times_{Np^{n}}]) \ar[r]& \Spec(\bb{Z}_p[\mu^\times_{Np^{n-1}}]) \ar[r] & \cdots,
\end{tikzcd}
\]
  the upper horizontal arrows being  induced by the map \[(P_{univ,Np^{n+1}}, Q_{univ,Np^{n+1}})\mapsto (pP_{univ,Np^{n+1}}, pQ_{univ,Np^{n+1}})=(P_{univ,Np^{n}}, Q_{univ,Np^{n}}),  \] and the lower horizontal arrows by the natural inclusions. In fact,  the commutativity of the diagram  is a consequence of the compatibility of the Weil pairing with multiplication by $p$
\[
e_{Np^{n+1}}(pP_{univ,Np^{n+1}}, pQ_{univ,Np^{n+1}})= e_{Np^{n}}(P_{univ,Np^{n}}, Q_{univ,Np^{n}})^p
\]
cf.  Theorems 5.5.7 and 5.6.3 of \cite{katz1985arithmetic}.

\subsection*{Deformation rings at $\overline{\bb{F}}_p$-points }

Let $k=\overline{\bb{F}}_p$ be an algebraic closure of $\bb{F}_p$.  Let $\{\zeta_{Np^n}\}_{n\in \bb{N}}$ be a fixed  sequence of compatible primitive $Np^n$-th roots of unity, set $\zeta_{p^n}=\zeta_{Np^n}^{N}$.  Let $\breve{\bb{Z}}_{p}=W(k)$ denote the ring of integers of the $p$-adic completion of the maximal unramified extension of $\bb{Q}_p$.    In the next paragraphs we will study the deformation rings of the modular curve at the closed points  $X(Np^n)(k)$.   We let $X(Np^n)_{\breve{\bb{Z}}_p}$ denote the compactified modular curve over $\breve{\bb{Z}}_p$ of level $\Gamma(Np^n)$. Proposition 8.6.7 of \cite{katz1985arithmetic}   implies that  $\mc{n}_{\breve{\bb{Z}}_p}= \mc{n}\times_{\Spec \bb{Z}_p} \Spec \breve{\bb{Z}}_p$.

  There is an isomorphism  $\breve{\bb{Z}}_p [\mu_{Np^n}^{\times}]\cong \prod_{k\in (\bb{Z}/N\bb{Z})^{\times}} \breve{\bb{Z}}_p[\zeta_{p^n}]$ given by fixing a primitive $N$-th root of unity in $\breve{\bb{Z}}_p$.    Let $\mc{n}_{\breve{\bb{Z}}_p}^{\circ}$ be the connected component of the modular curve which corresponds to the root $\zeta_N$. In other words,  $\mc{n}_{\breve{\bb{Z}}_p}^\circ$ is the locus of $\mc{n}_{\breve{\bb{Z}}_p}$ where $e_{Np^n}(P_{univ,Np^n}, Q_{univ,Np^n})= \zeta_{Np^n}$.  We  denote  $P_{univ}^{(n)}:=N P_{univ,Np^{n}}$ and $Q_{univ}^{(n)}:=NQ_{univ,Np^n}$.  
  
   Finally,  given an elliptic curve $E/S$, we denote by $\widehat{E}$ the completion of $E$ along the identity section.

\vspace{10pt}

\subsubsection*{The ordinary points}

Let $\Art_k$ be the category of local artinian rings with residue field $k$, whose morphisms are the local ring  homomorphisms  compatible with the reduction to $k$.   Any object in $\Art_k$ admits an unique algebra structure over $\breve{\bb{Z}}_p$.  Let $\breve{\bb{Z}}_p[\zeta_{p^n}]\mbox{-}\Art_k$ denote the subcategory of $\Art_k$ of objects endowed with an algebra structure of $\breve{\bb{Z}}_p[\zeta_{p^n}]$ compatible with the reduction to $k$.   Following \cite{katz1981serre},   we use the  Serre-Tate parameter to describe the deformation rings at ordinary $k$-points of $X(Np^n)_{\breve{\bb{Z}}_p}$.

Let $E_0$ be an ordinary elliptic curve over $k$ and $R$  an object in $\Art_k$. A \textit{deformation of $E_0$ to $R$} is a pair $(E,\iota)$ consisting of an elliptic curve $E/R$ and an isomorphism $\iota:E\otimes_R k\rightarrow E_0$.  We define the deformation functor $\Ell_{E_0}: \Art_k \rightarrow \mbox{Sets}$ by the rule
\[
R \mapsto \{(E,\iota): \mbox{ deformation of $E_0$ to $R$}\}/\sim.
\]
Then $\Ell_{E_0}$ sends an artinian ring $R$ to the set of deformations of $E_0$ to $R$ modulo isomorphism.

Let $Q$ be a generator of the physical Tate module $T_pE_0(k)=T_p (E_0[p^\infty]^{et})$. Let $\bb{G}_m$ be the multiplicative group over $\breve{\bb{Z}}_p$ and $\widehat{\bb{G}}_m$ its formal completion along the identity.    We have the following pro-representability theorem 

\begin{theo}{\cite[Theo. 2.1]{katz1981serre}}
\label{SerreTate}
\begin{enumerate}

\item
 The Functor $\Ell_{E_0}$ is pro-representable by the formal scheme 
 \[\Hom_{\bb{Z}_p}(T_pE_0(k)\otimes T_p E_0(k),\widehat{\bb{G}}_m).\]
 The isomorphism is given by the Serre-Tate parameter $q$, which sends a deformation $E/R$ of $E_0$ to a bilinear form 
 \[
 q(E/R; \cdot,\cdot): T_pE_0(k)\times T_p E_0(k)\rightarrow \widehat{\bb{G}}_m(R).
 \]
By evaluating at the fixed generator $Q$ of $T_p E_0(k)$, we obtain the more explicit description 
\[
\Ell_{E_0}=\Spf( \breve{\bb{Z}}_p[[X]])
\]
 where $X=q(E_{univ}/\Ell_{E_0}; Q,Q)-1$. 

\item
 
 Let $E_0$ and $E_0'$ be ordinary elliptic curves over $k$, let $\pi_0: E_0\longrightarrow E_0'$ be a homomorphism and  $\pi_0^t: E_0' \longrightarrow E_0$  its dual.  Let $E$ and $E'$ be liftings of $E_0$ and $E_0'$ to $R$ respectively.  A necessary and sufficient condition for $\pi_0$  to lift to a homomorphism $\pi:E\longrightarrow E'$ is that 
\[
q(E/R;\alpha, \pi^t(\beta))=q(E'/R; \pi(\alpha), \beta)
\] 
for every $\alpha\in T_pE(k)$ and $\beta\in T_p E'(k)$.
 
\end{enumerate}
\end{theo}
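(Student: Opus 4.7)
The plan is to reduce the deformation theory of the ordinary elliptic curve $E_0$ to that of its $p$-divisible group, and then to exploit the rigidity of the connected–étale sequence.

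First I would invoke the classical Serre–Tate comparison theorem, which identifies the deformation functor of an abelian variety over $k$ with the deformation functor of its associated $p$-divisible group. This requires the fact that the obstruction to lifting an abelian scheme through a square-zero thickening $R' \twoheadrightarrow R$ with nilpotent kernel is controlled by Grothendieck–Messing / crystalline Dieudonné theory, and vanishes because the Hodge filtration of the lift is rigidly determined by $R'$. Granting this, the functor $\Ell_{E_0}$ is equivalent to the functor of deformations of the $p$-divisible group $E_0[p^\infty]$.

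In the ordinary case the connected–étale sequence
\[
0 \to \widehat{E_0} \to E_0[p^\infty] \to E_0[p^\infty]^{et} \to 0
\]
has its outer terms equal to a one-dimensional formal multiplicative group $\widehat{E_0} \cong \widehat{\bb{G}}_m$ and an étale $p$-divisible group of height one. Both lift uniquely to any $R \in \Art_k$ by the rigidity of formal tori and of étale group schemes, so the set of deformations is a torsor under
\[
\mathrm{Ext}^1_R(E_0[p^\infty]^{et}, \widehat{\bb{G}}_m) \cong \Hom_{\bb{Z}_p}(T_pE_0(k), \widehat{\bb{G}}_m(R)).
\]
Using the autoduality of $E_0[p^\infty]$ supplied by the principal polarization of $E_0$, which identifies the étale quotient with the Cartier dual of the connected part in a way compatible with the Weil pairing, one can rewrite this invariant as a bilinear form
\[
q(E/R; -, -) : T_pE_0(k) \otimes T_pE_0(k) \to \widehat{\bb{G}}_m(R),
\]
yielding the pro-representability. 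Evaluating at the chosen generator $Q$ of $T_pE_0(k)$ produces the explicit description $\Ell_{E_0} = \Spf \breve{\bb{Z}}_p[[X]]$ with $X = q(E_{univ}; Q, Q) - 1$.

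For part (2) the argument is formal once part (1) is in place: a homomorphism $\pi_0: E_0 \to E_0'$ induces by functoriality a map of connected–étale sequences, hence an action on $\mathrm{Ext}^1$, and a lift $\pi$ of $\pi_0$ to $R$ exists if and only if the extension classes of $E$ and $E'$ become compatible under $\pi_0$ and its dual $\pi_0^t$. Unwinding this compatibility via the bilinear description of $q$ and the canonical duality between $\pi_0$ and $\pi_0^t$ produces exactly the relation $q(E/R; \alpha, \pi^t\beta) = q(E'/R; \pi\alpha, \beta)$. The main obstacle is the Serre–Tate comparison theorem itself, the nontrivial statement that deformations of an abelian scheme and of its $p$-divisible group coincide; this is the source of rigidity underlying the whole argument. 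A secondary technical point is the precise compatibility of the principal-polarization-induced autoduality of $E_0[p^\infty]$ with the Weil pairing, needed so that the construction of $q$ is genuinely bilinear on $T_pE_0(k) \otimes T_pE_0(k)$.
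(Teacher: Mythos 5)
The paper does not prove this statement at all: it is quoted as-is from \cite[Theo.\ 2.1]{katz1981serre}, and the entire argument lives in Katz's paper, not here. So there is no ``paper's own proof'' to compare you against, and the comparison must be with Katz's original argument.

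With that caveat, your sketch does reproduce the standard proof. Reducing to deformations of the $p$-divisible group via Serre--Tate/Grothendieck--Messing, splitting the connected--étale sequence, using rigidity of the toric and étale parts, and then classifying the extension class in $\mathrm{Ext}^1(E_0[p^\infty]^{et},\widehat{E}_0)\cong\Hom_{\bb{Z}_p}\bigl(T_pE_0(k)\otimes T_pE_0^t(k),\widehat{\bb{G}}_m\bigr)$ is exactly the shape of Katz's argument, and part~(2) does follow by functoriality of the extension class once~(1) is in place.

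Two points you gloss over deserve flagging. First, the identification $\widehat{E}_0\cong\widehat{\bb{G}}_m$ should be made canonically: $\widehat{E}_0$ is Cartier dual to $E_0^t[p^\infty]^{et}$, so the extension class naturally lives in $\Hom\bigl(T_pE_0(k)\otimes T_pE_0^t(k),\widehat{\bb{G}}_m(R)\bigr)$, and only after applying the principal polarization $E_0\cong E_0^t$ do you land in $\Hom\bigl(T_pE_0(k)^{\otimes 2},\widehat{\bb{G}}_m(R)\bigr)$. Second, and more substantively, the claim that the resulting form $q(E/R;\,\cdot\,,\,\cdot\,)$ is actually \emph{symmetric} under this polarization-induced identification --- which is what makes $T_pE_0(k)\otimes T_pE_0(k)\to\widehat{\bb{G}}_m$ a well-posed target and makes the evaluation $q(E;Q,Q)$ the natural coordinate --- is a genuine theorem of Katz, proved via the identity $q(E/R;\alpha,\beta)=q(E^t/R;\beta,\alpha)$ together with the fact that a polarization lifts iff it respects $q$. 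Your phrase ``one can rewrite this invariant as a bilinear form'' elides this, and it is precisely the content of~(2) (applied to $\pi_0=$ the polarization) that certifies the symmetry. As written, then, your part~(1) quietly uses part~(2), so the two should be set up simultaneously rather than in sequence.
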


  We deduce the following proposition describing the ordinary deformation rings of finite level: 

\begin{prop}
\label{theodefringord}
Let $x\in \mc{n}_{\breve{\bb{Z}}_p}^{\circ}(k)$ be  an ordinary   point,  say given by a triple $(E_0,P_0,Q_0)$,  and write $(P_0^{(n)},Q_0^{(n)})=(NP_0, NQ_0)$.   Let $A_{x}$ denote the deformation ring of $\mc{n}_{\breve{\bb{Z}}_p}^{\circ}$ at $x$. Then there is an isomorphism
\begin{equation}
\label{eqisoordinarypoint}
 A_{x}\cong \breve{\bb{Z}}_p[\zeta_{p^n}][[X]][T]/((1+T)^{p^n}-(1+X))=  \breve{\bb{Z}}_p[\zeta_{p^n}][[T]]
\end{equation}
such that: 

\begin{itemize}
\item[\normalfont{i.}]

the map (\ref{eqisoordinarypoint}) is $\breve{\bb{Z}}_p[\zeta_{p^n}]$-linear.

\item[\normalfont{ii.}]  the variable $1+X$ is equal to the Serre-Tate parameter $q(E_{univ}/A_{\overline{x}};Q,Q)$;

\item[\normalfont{iii.}] the variable $1+T$ is equal to the Serre-Tate parameter $q(E'_{univ}/A_{\overline{x}}, (\pi^{t})^{-1}(Q), (\pi^{t})^{-1}(Q))$ of the universal deformation $\pi: E_{univ}\rightarrow E'_{univ}$ of the \'etale isogeny $\pi_0: E_0\rightarrow E_0/C_0$, with $C_0=E_0[p^n]^{et}$.

\end{itemize}

\proof

The group scheme $E_0[N]$ is finite \'etale over $k$, which implies that   a deformation of $(E_0,P_0,Q_0)$ is equivalent to a deformation of $(E_0,P_0^{(n)},Q_0^{(n)})$.   The group $\SL_2(\bb{Z}/p^n\bb{Z})$ acts transitively on the set of Drinfeld bases of $E_0[p^n]$ with Weil pairing $\zeta_{p^n}$.  Without loss of generality,  we can assume that  $P_0^{(n)}=0$ and that $Q_0^{(n)}$ generates $E_0[p^n](k)$, see Theorem 5.5.2 of \cite{katz1985arithmetic}.  Let $E_{univ}$  denote the universal elliptic curve over $A_{x}$  and  $C\subset E_{univ}[p^n]$  the subgroup generated by $Q^{(n)}_{univ}$, it is an \'etale group lifting the \'etale group $C_0=E_0[p^n]^{et}$.  The base $(P_{univ}^{(n)},Q_{univ}^{(n)})$ provides  a splitting of the exact sequence 
\[
\begin{tikzcd}
0 \ar[r] & \widehat{E}_{univ} \ar[r] & E_{univ}[p^n] \ar[r, ] &  C_0 \ar[r] \ar[l, bend right, "Q_{univ}^{(n)}"'] & 0.
\end{tikzcd}
\]
Conversely,  let $R$ be  an object in $\breve{\bb{Z}}_p[\zeta_{p^n}]\mbox{-}\Art_k$    and $E/R$ a deformation of $E_0$.    Let  $C$   be an \'etale subgroup of $E[p^n]$ of rank $p^n$.     Then there exists a unique $Q^{(n)}\in C$ reducing to $Q_0^{(n)}$ modulo the maximal ideal. By Cartier duality,  there is   a unique $P^{(n)}\in \widehat{E}[p^n]$ such that $e(P^{(n)}, Q^{(n)})=\zeta_{p^n}$. The pair $(P^{(n)},Q^{(n)})$ is then a Drinfeld basis of $E[p^n]$ lifting $(P^{(n)}_0,Q^{(n)}_0)$ (cf.  Proposition 1.11.2 of \cite{katz1985arithmetic}).   We have  proved the equivalence of functors of  $\breve{\bb{Z}}_p[\zeta_{p^n}]\mbox{-}\Art_k$
\[
\left\{ \substack{ \mbox{Deformations $E$ of  $E_0$  and }   \\ \mbox{Drinfeld bases of $E[p^n]$} \\ \mbox{ with Weil pairing $\zeta_{p^n}$}  } \right\}\!/\!\sim\,  \longleftrightarrow   \left\{ \substack{ \mbox{Deformations  $E$ of  $E_0$  and }    \\ \mbox{\'etale subgroup  $C\subset E[p^n]$    of rank  $p^n$}   } \right\}\!/\! \sim.
\]
We also have a natural equivalence 
\[
 \left\{ \substack{ \mbox{Deformations  $E$ of  $E_0$  and }    \\ \mbox{\'etale subgroup $C\subset E[p^n]$     of rank  $p^n$}   } \right\}\! /\!\sim\,  \longleftrightarrow \left\{ \substack{ \mbox{Deformations of the \'etale isogeny}  \\ 	\mbox{$\pi_0: E_0\rightarrow  E_0/C_0$} }\right\}\! /\!\sim. 
\]
Let $E'_{univ}/\breve{\bb{Z}}_p[\zeta_{p^n}][[T]]$ denote the universal deformation of $E_0/C_0$. The universal \'etale point $Q_{univ}^{(n)}$ induces an \'etale isogeny of degree $p^n$  over $A_{x}$
\[
\pi: E_{univ}\rightarrow  E'_{univ}
\] 
lifting the  quotient $\pi_0:E_0\rightarrow E_0/C_0$. Furthermore, the dual morphism $\pi^t: E'_{univ}\rightarrow E_{univ}$ induces an isomorphism of the physical Tate modules $\pi^{t}: T_pE'_{univ}(k)\xrightarrow{\sim} T_p E_{univ}(k)$. Let $Q\in T_{p} E_{univ}(k)$ be the fixed generator, and   $Q'\in T_{p} E'_{univ}(k)$   its inverse under $\pi^t$.  Theorem \ref{SerreTate} implies 
\[
q(E_{univ}; Q,Q)=q(E_{univ}; Q,\pi^{t}(Q'))=q(E'_{univ};\pi(Q), Q' )=q(E'_{univ};Q',Q')^{p^n}. 
\]
We obtain the isomorphism 
\[
A_{x}\cong \breve{\bb{Z}}_p[\zeta_{p^n}][[X, T]]/((1+T)^{p^n}-(1+X))=\breve{\bb{Z}}_p[\zeta_{p^n}][[T]]
\]
where  $X=q(E_{univ};Q,Q)-1$ and $T=q(E'_{univ}; Q',Q')-1$.
\endproof
\end{prop}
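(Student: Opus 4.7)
The plan is to describe $A_x$ as pro-representing a deformation functor on $\breve{\bb{Z}}_p[\zeta_{p^n}]\mbox{-}\Art_k$ and then use the Serre-Tate parameter (Theorem \ref{SerreTate}) on two different elliptic curves related by a $p^n$-isogeny to extract the relation $(1+T)^{p^n}=1+X$.

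First I would eliminate the tame level: since $p\nmid N$, the group scheme $E_0[N]$ is étale over $k$, so any deformation of $(E_0,P_0,Q_0)$ is equivalent to the data of a deformation of $(E_0,P_0^{(n)},Q_0^{(n)})$ together with the (uniquely lifting) prime-to-$p$ part. Next I would normalize using the transitive action of $\SL_2(\bb{Z}/p^n\bb{Z})$ on Drinfeld bases of $E_0[p^n]$ with fixed Weil pairing $\zeta_{p^n}$, so I may assume $P_0^{(n)}=0$ and $Q_0^{(n)}$ is an étale generator of $E_0[p^n](k)$. In this normalization the point $Q_{univ}^{(n)}$ generates an étale subgroup $C\subset E_{univ}[p^n]$ of rank $p^n$ lifting $C_0:=E_0[p^n]^{et}$, and by Cartier duality $P_{univ}^{(n)}$ is then forced to be the unique element of $\widehat{E}_{univ}[p^n]$ pairing to $\zeta_{p^n}$ against $Q_{univ}^{(n)}$ (Proposition 1.11.2 of \cite{katz1985arithmetic}). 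This yields an equivalence between the functor of deformations of $(E_0,P_0^{(n)},Q_0^{(n)})$ and the functor of pairs $(E,C)$ with $E$ a deformation of $E_0$ and $C\subset E[p^n]$ an étale subgroup of rank $p^n$, which in turn is equivalent to the functor of deformations of the étale isogeny $\pi_0\colon E_0\to E_0/C_0$.

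Then I would apply Theorem \ref{SerreTate} twice. On one side, $\Ell_{E_0}=\Spf\breve{\bb{Z}}_p[\zeta_{p^n}][[X]]$ with $1+X=q(E_{univ};Q,Q)$ for a generator $Q$ of $T_pE_0(k)$. On the other side, deformations of $E_0/C_0$ are parametrized by $\breve{\bb{Z}}_p[\zeta_{p^n}][[T]]$ with $1+T=q(E'_{univ};Q',Q')$, where $Q'=(\pi^t)^{-1}(Q)\in T_p(E_0/C_0)(k)$. The universal étale point $Q_{univ}^{(n)}$ defines a canonical étale isogeny $\pi\colon E_{univ}\to E'_{univ}$ over $A_x$ lifting $\pi_0$, whose dual induces an isomorphism $\pi^t\colon T_pE'_{univ}(k)\xrightarrow{\sim}T_pE_{univ}(k)$. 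The functoriality assertion (2) of Theorem \ref{SerreTate} then gives
\[
q(E_{univ};Q,Q)=q(E_{univ};Q,\pi^t(Q'))=q(E'_{univ};\pi(Q),Q')=q(E'_{univ};Q',Q')^{p^n},
\]
so $1+X=(1+T)^{p^n}$, yielding the claimed presentation of $A_x$, and the map is $\breve{\bb{Z}}_p[\zeta_{p^n}]$-linear by construction.

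The main obstacle, in my view, is not the Serre-Tate computation itself but justifying cleanly the equivalence between Drinfeld-basis data with prescribed Weil pairing and the pair $(E,C)$: one must check that in the normalized situation the Drinfeld-basis condition (the equality of divisors (\ref{eqDrinfeldbasis})) is automatic once $Q^{(n)}$ generates $C$ étale-locally and $P^{(n)}\in\widehat{E}[p^n]$ is its Cartier dual, which is exactly the content of Proposition 1.11.2 of \cite{katz1985arithmetic}. Once this is in place, everything else follows formally from the two applications of Serre-Tate and the compatibility under isogeny.
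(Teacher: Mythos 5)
Your proposal is correct and follows essentially the same route as the paper's own proof: eliminate the tame level via étaleness of $E_0[N]$, normalize with the $\SL_2(\bb{Z}/p^n\bb{Z})$-action so that $P_0^{(n)}=0$, reinterpret the Drinfeld-basis data as the choice of an étale subgroup (and hence as a deformation of the isogeny $\pi_0$, invoking Proposition 1.11.2 of Katz--Mazur exactly as the paper does), and then apply both parts of Theorem \ref{SerreTate} to derive $q(E_{univ};Q,Q)=q(E'_{univ};Q',Q')^{p^n}$. The only cosmetic difference is that you flag the Drinfeld-basis verification as the potential sticking point, whereas the paper takes it as the central content of the functor equivalence; the substance is identical.
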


\vspace{10pt}

\begin{remark}
Let $x\in X(Np^n)(k)$ be a  closed ordinary point. The special fiber of the map  $e_{Np^n}: X(Np^n)\rightarrow \Spec \bb{Z}_p[\mu_{Np^n}]$ is a union of Igusa curves with intersections at the supersingular points \cite[Theo. 13.10.3]{katz1985arithmetic}. The Igusa curves are smooth  over $\bb{F}_p$ \cite[Theo.  12.6.1]{katz1985arithmetic},  which implies that the deformation ring of $X(Np^n)$ at $x$ is isomorphic to a power series ring $\breve{\bb{Z}}_p[[T_n]]$ (cf.  discussion after Remark 3.4.4 of \cite{WeinsteinModinfinite}).  The content of the  previous proposition   is  the explicit  relation between the variables $T_n$ in the modular tower, see also Proposition 2.2 of \cite{JacobFullLevel2019}.
\end{remark}

\subsubsection*{The cusps}

Let $\Tate(q)/\bb{Z}_p((q))$ be the Tate curve, we recall from \!\cite[Ch. 8.8]{katz1985arithmetic}  that  it has $j$-invariant equal to \[1/q+744+\cdots.\] 
We consider the ring $\bb{Z}_p[[q]]$ as the completed stalk  of $\bb{P}^1_{\bb{Z}_p}$ at infinity. The Tate curve provides a description of the modular curve locally around the cusps, for that reason one can actually compute the formal deformation rings by means of this object,  see  \cite{katz1985arithmetic} and \cite{deligne1973schemas}. In fact, let $\widehat{\Cusps}[\Gamma(Np^n)]$ be the completion of the modular curve $\mc{n}_{\breve{\bb{Z}}_p}$ along the cusps. From the theory  developed in \cite[Ch. 8 and 10]{katz1985arithmetic}, more precisely Theorems 8.11.10 and  10.9.1, we deduce the following proposition:

\begin{prop}
\label{Theorem structure cusps 2}
We have an isomorphism of formal $\breve{\bb{Z}}_p[[q]]$-schemes 
\begin{equation*}
\widehat{\Cusps}(\left[\Gamma(Np^n)\right])  \xrightarrow{\sim} \displaystyle\bigsqcup_{\Lambda\in \HomSurj((\bb{Z}/Np^n\bb{Z})^2,\bb{Z}/Np^n\bb{Z})/\pm 1}  \Spf (\breve{\bb{Z}}_p[\zeta_{p^n}][[q^{1/Np^n}]]).
\end{equation*}
The morphism $\widehat{\Cusps}[\Gamma(Np^{n+1})]\rightarrow \widehat{\Cusps}[\Gamma(Np^n)]$ is induced  by the natural inclusion
\[
\breve{\bb{Z}}_p[\zeta_{p^n}][[q^{1/Np^{n}}]]\rightarrow \breve{\bb{Z}}_p[\zeta_{p^{n+1}}][[q^{1/Np^{n+1}}]]
\] 
on each respective connected component.
\end{prop}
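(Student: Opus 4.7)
The plan is to reduce to the explicit description of the Tate curve near the cusps given by Katz-Mazur, and then translate Drinfeld level structures on $\Tate(q)$ into the combinatorial datum appearing in the statement.

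First I would recall that, after extending scalars from $\bb{Z}_p((q))$ to $\bb{Z}_p[\zeta_{Np^n}]((q^{1/Np^n}))$, the $Np^n$-torsion of $\Tate(q)$ sits in a canonical split short exact sequence
\[
0 \longrightarrow \mu_{Np^n} \longrightarrow \Tate(q)[Np^n] \longrightarrow \bb{Z}/Np^n\bb{Z} \longrightarrow 0,
\]
where the \'etale quotient is generated by the class of $q^{1/Np^n}$. A Drinfeld basis $\psi:(\bb{Z}/Np^n\bb{Z})^2\to \Tate(q)[Np^n]$ is then the same as a pair of sections, one into $\mu_{Np^n}$ and one into the \'etale quotient, whose composition $\Lambda:=\pr\circ\psi:(\bb{Z}/Np^n\bb{Z})^2\twoheadrightarrow \bb{Z}/Np^n\bb{Z}$ is surjective (so that the induced divisor equation of a Drinfeld basis is satisfied). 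The Weil pairing of this basis equals $\zeta_{Np^n}^{\det}$ for an appropriate unit determinant, forcing the primitive $Np^n$-th root of unity; and the automorphism $[-1]$ of $\Tate(q)$ identifies $\Lambda$ with $-\Lambda$. This is essentially the content of Katz-Mazur \S 8.11, and in particular Theorem 8.11.10 which identifies the completion of the moduli problem along the cusps with a disjoint union indexed precisely by $\HomSurj((\bb{Z}/Np^n\bb{Z})^2,\bb{Z}/Np^n\bb{Z})/\pm 1$.

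Next I would identify each component. Fixing $\Lambda$, the \'etale part of the Drinfeld basis is determined by $\Lambda$ up to the choice of a compatible $Np^n$-th root of $q$, while the $\mu_{Np^n}$-part is determined by Cartier duality from the \'etale part, since the Weil pairing is pinned down to be $\zeta_{Np^n}$ on the relevant connected component. Therefore the formal deformation ring of $\mc{n}_{\breve{\bb{Z}}_p}$ at the corresponding cusp, obtained by deforming the pair $(\Tate(q),\psi)$ over $\breve{\bb{Z}}_p[[q]]$, is precisely $\breve{\bb{Z}}_p[\zeta_{p^n}][[q^{1/Np^n}]]$. Assembling over all $\Lambda\in \HomSurj/\pm 1$ yields the disjoint-union decomposition.

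Finally, for the compatibility of the transition maps I would invoke Theorem 10.9.1 of Katz-Mazur, which computes the behavior of these cuspidal completions under the tower maps. The forgetful map at the level of Drinfeld bases is $\psi_{n+1}\mapsto p\cdot\psi_{n+1}$, which on \'etale components sends a surjection $\Lambda_{n+1}$ to its reduction $p\Lambda_{n+1}\bmod Np^n$; correspondingly, on formal parameters it sends $q^{1/Np^n}$ to $(q^{1/Np^{n+1}})^p$ and is compatible with the inclusion $\breve{\bb{Z}}_p[\zeta_{p^n}]\subset \breve{\bb{Z}}_p[\zeta_{p^{n+1}}]$. The main obstacle, or rather the only genuine check, is verifying the bookkeeping that a system of compatible Drinfeld bases along the tower corresponds exactly to these inclusions of formal power series rings on each connected component; this is taken care of by the combinatorial part of Katz-Mazur 10.9.1, and given the structural results already invoked it is a direct translation rather than a new computation.
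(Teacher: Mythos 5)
Your proposal is correct and follows essentially the same route as the paper: the paper's own ``proof'' of this proposition is simply a citation of Theorems 8.11.10 and 10.9.1 of Katz--Mazur, and you invoke exactly these two results, filling in the Tate-curve translation that they leave implicit. One small imprecision is worth flagging: the divisor condition on a Drinfeld basis of $\Tate(q)[Np^n]$ is \emph{not} equivalent to surjectivity of $\Lambda=\pr\circ\psi$ alone, as your parenthetical suggests; it also constrains the $\mu_{Np^n}$-component (roughly, $\psi$ restricted to $\ker\Lambda$ must give a Drinfeld basis of $\mu_{Np^n}$). You do address this in the next paragraph via the Weil pairing and Cartier duality, so the gap is cosmetic rather than logical, but the parenthetical as stated would be misleading if read in isolation. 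Likewise the phrase ``$p\Lambda_{n+1}\bmod Np^n$'' for the transition on level structures should be read as ``the reduction of $\Lambda_{n+1}$ modulo $Np^n$'' (literal multiplication by $p$ would destroy surjectivity); again the conclusion you draw about $q^{1/Np^n}\mapsto (q^{1/Np^{n+1}})^p$ is the correct one, so this does not affect the argument.
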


\vspace{10pt}

\subsubsection*{The supersingular points}

Let $(x_n\in  \mc{n}_{\breve{\bb{Z}}_p})_{n\in \bb{N}}$ be a sequence of compatible supersingular points and $E_0$  the elliptic curve defined over $x_n$.  We denote by   $A_{x_n}$  the deformation ring of $X(Np^n)_{\breve{\bb{Z}}_p}$ at $x_n$.  Let $E_{univ}/A_{x_n}$ be the universal elliptic curve and $(P^{(n)}_{univ},Q^{(n)}_{univ})$ the  universal Drinfeld basis of  $E_{univ}[p^n]$. We fix a formal parameter  $T$ of $\widehat{E}_{univ}$.   Since $x_n$ is supersingular,   any $p$-power torsion point belongs to $\widehat{E}_{univ}$.  We will use the following lemma as departure point:

\begin{lem}{\cite[Theo. 5.3.2]{katz1985arithmetic}.} 
\label{Lemma generators max ideal}
The maximal ideal of the local ring $A_{x_n}$ is generated by $T(P^{(n)}_{univ})$ and $T(Q^{(n)}_{univ})$.

 \end{lem}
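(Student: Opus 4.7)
The plan is to identify $A_{x_n}$ with the Drinfeld deformation ring of the height-$2$ formal group $\widehat{E}_0$ and then appeal to Drinfeld's structure theorem. First I would observe that, since $x_n$ is supersingular, the $p$-divisible group of $E_0$ is the connected formal group $\widehat{E}_0$ of height $2$. By rigidity of étale group schemes, every $p^n$-torsion section of $E_{univ}/A_{x_n}$ factors through $\widehat{E}_{univ}$, so $T(P^{(n)}_{univ})$ and $T(Q^{(n)}_{univ})$ are well-defined elements of the maximal ideal $\f{m}$ of $A_{x_n}$.

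Next I would use the regularity of $X(Np^n)_{\breve{\bb{Z}}_p}$ (Katz--Mazur, Theorem 5.1.1) to deduce that $A_{x_n}$ is a regular local Noetherian ring of Krull dimension $2$, so that $\f{m}/\f{m}^2$ is $2$-dimensional over $k$. By Nakayama's lemma, the lemma would then follow from showing that the images of $T(P^{(n)}_{univ})$ and $T(Q^{(n)}_{univ})$ form a basis of $\f{m}/\f{m}^2$, or equivalently, that the quotient $A_{x_n}/(T(P^{(n)}_{univ}),T(Q^{(n)}_{univ}))$ is equal to $k$.

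The main step is to compare $A_{x_n}$ with the universal deformation ring $B_n$ of the triple $(\widehat{E}_0, P_0^{(n)}, Q_0^{(n)})$ on the category of artinian $\breve{\bb{Z}}_p[\zeta_{p^n}]$-algebras, where the Drinfeld basis is required to live inside the formal group. Via Serre--Tate theory, passing from an elliptic curve to its $p$-divisible group yields an equivalence between deformations of $(E_0, P_0^{(n)}, Q_0^{(n)})$ and deformations of $(\widehat{E}_0, P_0^{(n)}, Q_0^{(n)})$, so there is a canonical isomorphism $B_n \xrightarrow{\sim} A_{x_n}$ matching up the two coordinates $T(P), T(Q)$ on each side. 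Drinfeld's theorem on deformations of formal $\bb{Z}_p$-modules with full level structure then says precisely that $B_n$ is a regular local ring of dimension $2$ and that $(T(P), T(Q))$ is a regular system of parameters, giving the conclusion.

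The principal obstacle is Drinfeld's theorem itself: its proof proceeds by induction on $n$, exploiting the Drinfeld factorization $[p^n]_F(T) = \prod_{(a,b) \in (\bb{Z}/p^n\bb{Z})^2}(T -_F [a]_F T(P) +_F [b]_F T(Q))$ coming from the divisor identity $E[p^n] = \sum \psi(a,b)$, combined with Weierstrass preparation applied to $[p^n]_F$, and carefully tracking how the level structure constrains the formal group law. Once this input is granted, the statement is essentially the translation of Drinfeld's theorem into the language of the modular curve via Serre--Tate.
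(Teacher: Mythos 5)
The paper does not prove this lemma at all: it cites it directly as Theorem~5.3.2 of Katz--Mazur \cite{katz1985arithmetic}. Your sketch correctly reconstructs the standard Katz--Mazur/Drinfeld argument behind that citation, namely: Serre--Tate reduces deformations of the supersingular pair $(E_0,(P_0^{(n)},Q_0^{(n)}))$ to deformations of the height-$2$ formal group $\widehat{E}_0$ with Drinfeld $\Gamma(p^n)$-structure (the $N$-part being \'etale contributes nothing, and all $p^n$-torsion is in $\widehat E$ by connectedness), and Drinfeld's structure theorem then gives that the universal ring is regular local of dimension $2$ with $\left(T(P),T(Q)\right)$ a regular system of parameters. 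Your framing is thus consistent with the source, and the steps you identify --- identification of $A_{x_n}$ with the level-$n$ Drinfeld deformation ring, Weierstrass preparation of $[p^n]_F$, and the divisor identity (\ref{eqDrinfeldbasis}) giving the factorization of the distinguished part of $[p^n]_F$ into translates $T -_F ([a]_F T(P) +_F [b]_F T(Q))$ --- are exactly the inputs to Drinfeld's induction. Two small remarks: the factorization you wrote is missing parentheses (it should be $T -_F \bigl([a]_F T(P) +_F [b]_F T(Q)\bigr)$, and there is a unit factor from Weierstrass preparation), and the base category should be $\Art_k$ over $\breve{\bb{Z}}_p$ rather than over $\breve{\bb{Z}}_p[\zeta_{p^n}]$, since the Weil pairing (and hence the $\breve{\bb{Z}}_p[\zeta_{p^n}]$-structure) is induced by the Drinfeld basis rather than fixed in advance; neither affects the conclusion.
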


By  the Serre-Tate Theorem \cite[Theo. 1.2.1]{katz1981serre}, and the general moduli theory of $1$-dimensional formal groups over $k$ \cite{MR238854},   the deformation ring of $X(N)_{\breve{\bb{Z}}_p}$ at a   supersingular point is isomorphic to $\breve{\bb{Z}}_p[[X]]$. Moreover, the $p$-multiplication  modulo $p$ can be written as $[p](T)\equiv  V(T^p) \mod p$,  with $V\in k[[X]][[T]]$ the Verschiebung map $V: E_0^{(p)}\rightarrow E_0$.  Without loss of generality  we  assume that $V$ has the form 
\[
V(T)= XT+\cdots u(X)T^p+\cdots,
\]
with $V(T)\equiv T^p \mod X$. Using the Weierstrass Preparation Theorem we factorize $V(T)$ as 
\begin{equation}
\label{verschpowerseries}
V(T)=T(X+\cdots \tilde{u}(X)T^{p-1})(1+XT R(X,T)),
\end{equation} 
where $\tilde{u}(0)=1$ and $R\in k[[X,T]]$.  

\begin{prop}
\label{propSuperSingFrob}
The parameter $X$ is a $p$-power in  $A_{x_1}/p$. Moreover, the generators $T(P^{(n)}_{univ})$ and $T(Q^{(n)}_{univ})$ of the maximal ideal of $A_{x_n}$ are $p$-powers in  $A_{x_{n+1}}/p$.
\proof
The second claim follows from the first  and the equality $[p](T)\equiv V(T^p) \mod p$.  Consider $n=1$ and write $P=P^{(1)}_{univ}$ and $Q=Q^{(1)}_{univ}$. Let $F: E_0\rightarrow E_0^{(p)}$ and $V: E_0^{(p)}\rightarrow E_0$ denote the Frobenius and Verschiebung  homomorphisms respectively.  Using the action of $\GL_2(\bb{Z}/p\bb{Z})$,  we can assume that $P$ and $F(Q)$ are generators of $\ker F$ and  $\ker V$ respectively  (cf. Theorem 5.5.2 of \cite{katz1985arithmetic}).   We have the equality of divisors on $E_{univ}^{(p)}/(A_{x_n}/p)$
\begin{equation}
\label{kerversch}
\ker V= \sum_{i=0}^{p-1} [i\cdot F(Q)].
\end{equation}
The choice of the formal parameter $T$  gives  a formal parameter of $E^{(p)}_{univ}$ such that $T(F(Q))=T(Q)^p$. Therefore, from (\ref{kerversch}) we see that the roots of $V(T)/T$  are $\{[i]^{(p)}(T(Q)^p)\}_{1\leq i\leq p-1}$ where $[i]^{(p)}(T)$ is the $i$-multiplication of the formal group of  $E^{(p)}_{univ}$.  We obtain from (\ref{verschpowerseries})
\[
\frac{X}{\tilde{u}(X)}= (-1)^{p-1} \prod_{i=1}^{p-1} ([i]^{(p)}(T(Q)^p))= \left(\prod_{i=1}^{p-1}[i](T(Q))\right)^p,
\]
proving that $\frac{X}{\tilde{u}(X)}$ is a $p$-power in $A_{x_1}/p$. As $k[[X]]=k[[X/\tilde{u}(X)]]$ we are done. 
\endproof
\end{prop}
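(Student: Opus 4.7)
The plan is to reduce the second claim to the first and then attack the first by comparing two factorizations of the Verschiebung series modulo $p$. For the reduction, note that in the formal group $T(P^{(n)}_{univ})=[p]\bigl(T(P^{(n+1)}_{univ})\bigr)$ and similarly for $Q$, so the congruence $[p](T)\equiv V(T^p)\pmod{p}$ reduces the task to showing that, once $X$ is a $p$-th power in $A_{x_1}/p$, the power series $V(T^p)\in (A_{x_{n+1}}/p)[[T]]$ is itself a $p$-th power. If $X=Y^p$ in $A_{x_1}/p$, then any $f(X)\in k[[X]]\subset k[[Y]]\subset A_{x_{n+1}}/p$ is a $p$-th power because $k=\overline{\bb{F}}_p$ is perfect, so each coefficient of $V$ (as a series in $T$) is a $p$-th power and hence so is $V(T^p)$.

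For the first claim I would work at level $\Gamma(Np)$ modulo $p$. Using the transitive $\SL_2(\bb{Z}/p\bb{Z})$-action on Drinfeld bases of $E_0[p]$ with fixed Weil pairing (Theorem~5.5.2 of \cite{katz1985arithmetic}), I can normalize so that $P:=P^{(1)}_{univ}$ generates $\ker F$ and $F(Q):=F(Q^{(1)}_{univ})\in E_{univ}^{(p)}$ generates $\ker V$. The Drinfeld condition then upgrades the set-theoretic description of $\ker V$ to the divisor identity
\[
\ker V \;=\; \sum_{i=0}^{p-1} [\,i\,F(Q)\,]
\]
on $E_{univ}^{(p)}/(A_{x_1}/p)$.

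The key step is to read this identity in the formal group. Fixing the compatible formal parameter on $E_{univ}^{(p)}$ so that Frobenius acts as $T\mapsto T^p$, we get $T(F(Q))=T(Q)^p$, and the non-zero roots of $V(T)/T\in (A_{x_1}/p)[[T]]$ are exactly $[i]^{(p)}\bigl(T(Q)^p\bigr)$ for $i=1,\dots,p-1$. Matching against the Weierstrass factorization \eqref{verschpowerseries} yields, up to sign,
\[
\frac{X}{\tilde{u}(X)} \;=\; \prod_{i=1}^{p-1}[i]^{(p)}\bigl(T(Q)^p\bigr) \;=\; \Bigl(\prod_{i=1}^{p-1}[i]\bigl(T(Q)\bigr)\Bigr)^{\!p},
\]
exhibiting $X/\tilde{u}(X)$ as an explicit $p$-th power in $A_{x_1}/p$. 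Since $\tilde{u}(0)=1$, the element $X/\tilde{u}(X)$ is another topological generator of $k[[X]]$, and a formal substitution $X\leftrightarrow X/\tilde{u}(X)$ transfers the conclusion to $X$ itself.

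The step I expect to be the main obstacle is the bridge between the divisor-theoretic identity for $\ker V$ and the power-series factorization of $V(T)/T$ in $(A_{x_1}/p)[[T]]$. This is where the supersingular hypothesis is essential: it forces all $p$-power torsion to live inside the formal group, so that divisor-theoretic zeros of Verschiebung faithfully correspond to roots of the power series, and the final comparison with the Weierstrass preparation can pin down the value of $X/\tilde{u}(X)$ unambiguously.
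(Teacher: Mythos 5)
Your proof is correct and follows essentially the same approach as the paper's, with the same normalization via $\GL_2(\bb{Z}/p\bb{Z})$, the divisor identity for $\ker V$, the Weierstrass factorization, and the final change-of-variable observation $k[[X]]=k[[X/\tilde{u}(X)]]$. You add a useful elaboration on the reduction of the second claim to the first (using perfectness of $k$ and the characteristic-$p$ Frobenius additivity to show $V(T^p)$ becomes a $p$-th power once $X$ is), a step the paper leaves implicit.
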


\begin{cor}
\label{corsupersingfrob}
The Frobenius $\varphi: \varinjlim_n A_{x_n}/p \rightarrow \varinjlim_n A_{x_n}/p$ is surjective. 

\proof
By induction on the graded pieces of the filtration defined by the ideal $(T(P^n),T(Q^n))$, one shows that $A_{x_n}/p$ is in the image of the Frobenius restricted to $A_{x_{n+1}}/p$.  
\endproof
\end{cor}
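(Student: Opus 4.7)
The plan is to upgrade Proposition \ref{propSuperSingFrob}---which asserts that the generators of $\f{m}_n$ become $p$-th powers one step higher in the tower---to the ring-theoretic statement that every element of $A_{x_n}/p$ is a $p$-th power in $A_{x_{n+1}}/p$.  Once this inclusion $A_{x_n}/p \subseteq \varphi(A_{x_{n+1}}/p)$ is established, surjectivity of $\varphi$ on $\varinjlim_n A_{x_n}/p$ is immediate: any element is represented in some $A_{x_n}/p$, and is therefore a $p$-th power in $A_{x_{n+1}}/p \subseteq \varinjlim_n A_{x_n}/p$.

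To prove the inclusion I would first exploit the perfectness of $k=\overline{\bb{F}}_p$ together with Cohen's structure theorem: the characteristic-$p$ complete local ring $A_{x_n}/p$ contains a unique coefficient field isomorphic to $k$, and the continuous $k$-algebra map $k[[X,Y]] \to A_{x_n}/p$ sending $X, Y$ to $T(P^{(n)}_{univ}), T(Q^{(n)}_{univ})$ (which generate $\f{m}_n$ by Lemma \ref{Lemma generators max ideal}) is surjective.  Hence every $a \in A_{x_n}/p$ admits an $\f{m}_n$-adically convergent expansion
\[
a \;=\; \sum_{i,j \geq 0} \alpha_{ij}\, T(P^{(n)}_{univ})^i\, T(Q^{(n)}_{univ})^j, \qquad \alpha_{ij} \in k,
\]
constructed by inductively lifting $a$ across the graded pieces of the $\f{m}_n$-adic filtration---this is the concrete content of the one-line sketch in the paper.

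Passing to $A_{x_{n+1}}/p$, I would substitute $T(P^{(n)}_{univ}) = u^p$ and $T(Q^{(n)}_{univ}) = v^p$ for some $u,v\in\f{m}_{n+1}$, as supplied by Proposition \ref{propSuperSingFrob}, and use perfectness of $k$ once more to set $\beta_{ij} := \alpha_{ij}^{1/p}\in k$.  The Freshman's Dream in characteristic $p$ then yields
\[
a \;=\; \sum_{i,j} \alpha_{ij}\, u^{pi} v^{pj} \;=\; \sum_{i,j} (\beta_{ij}\, u^i v^j)^p \;=\; \Big(\sum_{i,j} \beta_{ij}\, u^i v^j\Big)^p,
\]
where the last equality amounts to commuting $\varphi$ past an infinite sum.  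This is legitimate because $\varphi$ sends $\f{m}_{n+1}^N$ into $\f{m}_{n+1}^{pN}$ and is therefore continuous for the $\f{m}_{n+1}$-adic topology in which $A_{x_{n+1}}/p$ is complete.

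The step I expect to be most delicate is the production of the convergent $k$-linear expansion of $a$: it requires invoking Cohen's theorem for the coefficient field and then an inductive argument across the graded pieces $\f{m}_n^s/\f{m}_n^{s+1}$---each spanned over $k$ by the degree-$s$ monomials in $T(P^{(n)}_{univ})$ and $T(Q^{(n)}_{univ})$---to assemble the series degree by degree.  Once this structural input is in hand, the rest is a purely formal manipulation driven by characteristic $p$ identities and $\f{m}$-adic completeness.
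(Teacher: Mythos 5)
Your argument is correct and is essentially the paper's own proof spelled out: the paper's ``induction on the graded pieces of the filtration defined by $(T(P^{(n)}_{univ}),T(Q^{(n)}_{univ}))$'' is exactly your Cohen-coefficient-field construction of a convergent expansion $a=\sum\alpha_{ij}T(P^{(n)}_{univ})^iT(Q^{(n)}_{univ})^j$, after which substituting the $p$-th roots from Proposition \ref{propSuperSingFrob}, pulling out $p$-th roots of the $\alpha_{ij}$ in the perfect field $k$, and commuting the (additive, $\f{m}$-adically continuous) Frobenius past the infinite sum yields the $p$-th root. You have filled in precisely the details the paper leaves implicit; there is no substantive difference in approach.
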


\begin{remark}
The completed local ring  at a geometric  supersingular point $\overline{x}$ of $X(Np^n)$  is difficult to describe.  For example,  its reduction modulo $p$ is the quotient of the power series ring  $k[[X,Y]]$ by some explicit principal ideal which is written in terms of the formal group law of $E$ at $\overline{x}$ \cite[Theo.  13.8.4]{katz1985arithmetic}. Weinstein gives in \cite{MR3529120} an explicit description of the deformation ring at a supersingular point of the modular curve at level $\Gamma(Np^\infty)$. In fact, Weinstein finds an explicit description of the deformation ring at infinite level of the Lubin-Tate space parametrizing  $1$-dimensional formal $\n{O}_K$-modules  of arbitrary height.  In particular,  he proves that the $\f{m}_{x_0}$-adic completion of the direct limit $\varinjlim_n A_{x_n}$ is a perfectoid ring.  The Corollary \ref{corsupersingfrob} says that the $p$-adic completion of $\varinjlim_{n} A_{x_n}$ is perfectoid, which is a slightly stronger result. 
\end{remark}

\section{Construction of the perfectoid integral model}
\label{Proof Theorem}

\subsection*{Perfectoid Formal  spaces}

In this section we introduce a notion of perfectoid formal  scheme which is already considered in \cite[Lemma 3.10]{MR3905467}, though not explicitly defined.      We start with the affine pieces 

\begin{definition}
An \textit{integral perfectoid ring} is a topological  ring $R$ containing a  non zero divisor $\pi$ such that  $p\in \pi^p R$, satisfying the following conditions:
\begin{itemize}

\item[i.] the ring $R$ is endowed with the $\pi$-adic topology. Moreover, it is  separated and complete. 

\item[ii.] the Frobenius morphism $\varphi: R/\pi R\rightarrow R/\pi^{p}R$ is an isomorphism. 
\end{itemize}
We call $\pi$ satisfying the previous conditions a \textit{pseudo-uniformizer} of $R$. 
\end{definition}

\begin{remark}
The previous definition of integral perfectoid rings   is well suited for $p$-adic completions of formal schemes. We do not consider the case where the underlying topology is not generated by a non-zero divisor, for example,   the ring $W(\overline{\bb{F}}_p)[[X^{1/p^\infty}, Y^{1/p^{\infty}}]]$ which is the $(p,X,Y)$-adic completion of the ring $W(\overline{\bb{F}}_p)[X^{1/p^{\infty}}, Y^{1/p^{\infty}}]$.   As is pointed out in Remark 3.8 of \cite{MR3905467},  the notion of being integral perfectoid does  not depend on the underlying  topology, however to  construct  a formal scheme it is necessary to fix one. 
\end{remark}

Let $R$ be an integral perfectoid ring with pseudo-uniformizer $\pi$, we attach to $R$ the formal scheme $\Spf R$ defined as the $\pi$-adic completion of $\Spec R$.    We say that $\Spf R$ is a  perfectoid formal affine scheme.    The following lemma says that the standard open subschemes of $\Spf R$ are perfectoid 

\begin{lem}
\label{lemLocalizationperf}
Let $f\in R$.  Then $R\langle f^{-1} \rangle= \varprojlim_n R/\pi^n [f^{-1}]$ is an integral perfectoid ring. 
\proof
Let $n,k\geq 0$,  as $\pi$ is not a zero divisor we have a short exact sequence 
\[
0\rightarrow R/\pi^n \xrightarrow{\pi^k} R/\pi^{n+k}\rightarrow R/\pi^k \rightarrow 0.
\]
Localizing at $f$ and taking inverse limits on $n$ we obtain 
\[
0\rightarrow R\langle f^{-1}\rangle\xrightarrow{\pi^k} R\langle f^{-1} \rangle\rightarrow R/\pi^{k} [f^{-1}]\rightarrow 0.
\]
Then $R\langle f^{-1} \rangle$ is $\pi$-adically complete and $\pi$ is not a zero-divisor.  On the other hand, localizing at $f$ the Frobenius map  
$\varphi: R/\pi \xrightarrow{\sim} R/\pi^p$ one gets 
\[
\varphi: R/\pi[f^{-1}]\xrightarrow{\sim} R/\pi^{p}[f^{-p}]= R/\pi^p [f^{-1}]
\]
which proves that $R\langle f^{-1}\rangle$ is an integral perfectoid ring. 
\endproof
\end{lem}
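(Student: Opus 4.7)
The goal is to verify, for $R\langle f^{-1}\rangle := \varprojlim_n R/\pi^n[f^{-1}]$, the three defining properties of an integral perfectoid ring with pseudo-uniformizer $\pi$: (a) it is $\pi$-adically separated and complete with $\pi$ a non-zero divisor and $p\in\pi^p R\langle f^{-1}\rangle$; (b) the Frobenius $R\langle f^{-1}\rangle/\pi \to R\langle f^{-1}\rangle/\pi^p$ is an isomorphism. The condition on $p$ is automatic since $p \in \pi^p R \subset \pi^p R\langle f^{-1}\rangle$, so the work concentrates on (a) and (b).

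The technical heart of the argument is to identify $R\langle f^{-1}\rangle/\pi^k R\langle f^{-1}\rangle$ with the ordinary localization $R/\pi^k[f^{-1}]$, because then both the completeness statement and the Frobenius statement reduce to facts already known about $R$. The plan is to start from the short exact sequence
\[
0 \to R/\pi^n \xrightarrow{\pi^k} R/\pi^{n+k} \to R/\pi^k \to 0,
\]
which is exact on the left because $\pi$ is a non-zero divisor in $R$. Localization at $f$ is exact, and the resulting inverse system indexed by $n$ has surjective transition maps, so Mittag-Leffler holds and passage to the inverse limit in $n$ preserves exactness. Since the quotient term $R/\pi^k[f^{-1}]$ is independent of $n$, this produces
\[
0 \to R\langle f^{-1}\rangle \xrightarrow{\pi^k} R\langle f^{-1}\rangle \to R/\pi^k[f^{-1}] \to 0.
\]
Injectivity of multiplication by $\pi^k$ is the non-zero divisor property; the cokernel identification together with the definition $R\langle f^{-1}\rangle = \varprojlim_n R/\pi^n[f^{-1}]$ simultaneously gives $R\langle f^{-1}\rangle/\pi^k \cong R/\pi^k[f^{-1}]$ and the $\pi$-adic completeness and separatedness.

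For the Frobenius, I would localize the isomorphism $\varphi\colon R/\pi \xrightarrow{\sim} R/\pi^p$ at the image of $f$. Frobenius sends $f$ to $f^p$, so under this isomorphism the source localization at $f$ corresponds to the target localization at $f^p$; since $f$ is already a unit in $R/\pi^p[f^{-1}]$, so is $f^p$, hence $R/\pi^p[f^{-p}] = R/\pi^p[f^{-1}]$. Combined with the identifications from the previous step, this produces the required isomorphism $R\langle f^{-1}\rangle/\pi \xrightarrow{\sim} R\langle f^{-1}\rangle/\pi^p$.

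The only genuinely delicate point is that localization at $f$ does not in general commute with $\pi$-adic completion; here it essentially does, and the reason is exactly the Mittag-Leffler/exact-sequence argument above, together with the fact that we are localizing at an element of $R$ and not enlarging the topology. Once that identification is in place, the rest of the verification is a matter of reading off properties of $R$.
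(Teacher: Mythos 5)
Your proposal is correct and follows essentially the same route as the paper: the same short exact sequence $0\to R/\pi^n\xrightarrow{\pi^k} R/\pi^{n+k}\to R/\pi^k\to 0$, localized at $f$ and passed to the inverse limit (you spell out the Mittag-Leffler justification that the paper leaves tacit), and the same localization of the Frobenius using $R/\pi^p[f^{-p}]=R/\pi^p[f^{-1}]$. The added remark that $p\in\pi^pR\subset\pi^pR\langle f^{-1}\rangle$ is a harmless clarification the paper omits.
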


\begin{definition}
A \textit{perfectoid formal  scheme} $\f{X}$ is a formal scheme which admits an affine cover $\f{X}=\bigcup_i U_i$ by  perfectoid  formal affine schemes. 
\end{definition}

Let $F$ be equal to $\bb{Q}_p$  or $\bb{F}_p((t))$,  $\n{O}_F$ denote  the ring of integers of $F$ and $\varpi$ be a uniformizer of $\n{O}_F$.  Let  $\Intperf_{\n{O}_F}$ be the category of   perfectoid formal  schemes over $\n{O}_F$   whose structural morphism  is  adic, i.e. the category of formal  perfectoid  schemes $\f{X}/\Spf \n{O}_F$ such that $\varpi \s{O}_{\f{X}}$ is an ideal of definition of $\s{O}_{\f{X}}$.  Let $\Perf_F$ be the category of perfectoid spaces over $\Spa(F,\n{O}_F)$. 

\begin{prop} 
\label{PropgenericFiberFunctor}
 Let $R$ be an integral perfectoid ring and $\pi$ a pseudo-uniformizer.  The ring $R[\frac{1}{\pi}]$ is a perfectoid ring in the sense of Fontaine \cite{MR3087355}.  Furthermore,  there is a unique ``generic fiber'' functor 
\[
(-)_{\eta}: \Intperf_{\n{O}_F} \rightarrow Perf_F
\]
extending $\Spf R \leadsto \Spa(R[\frac{1}{\varpi}], R^+)$, where $R^+$ is the integral closure of $R$ in $R[\frac{1}{\varpi}]$.  Moreover, given $\f{X}$ a perfectoid formal scheme over $\n{O}_F$,  its generic fiber is universal for morphisms from perfectoid spaces to $\f{X}$.  Namely,  if $\n{Y}$ is a perfectoid space and $(\n{Y}, \s{O}^+_{\n{Y}})\to (\f{X}, \s{O}_{\f{X}})$ is a morphism of locally and topologically ringed spaces, then there is a unique map $\n{Y}\to \f{X}_{\mu}$ making the following diagram commutative  
\[
\begin{tikzcd}[row sep= 10pt, column sep =1pt]
(\n{Y}, \s{O}_{\n{Y}}^+)	\ar[rr]\ar[rd] & & (\f{X}_{\mu}, \s{O}_{\f{X}_{\mu}}^+)\ar[ld] \\ 
& (\f{X}, \s{O}_{\f{X}}) &
\end{tikzcd}
\]

\end{prop}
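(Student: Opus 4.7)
The plan is to first establish the affine case and then globalize. Let $R$ be an integral perfectoid ring with pseudo-uniformizer $\pi$, so $p\in \pi^p R$, $\pi$ is a non-zero-divisor, $R$ is $\pi$-adically complete, and $\varphi:R/\pi\xrightarrow{\sim}R/\pi^p$. Then $R[1/\pi]$ is a complete Tate ring with ring of definition $R$ and pseudo-uniformizer $\pi$; to check Fontaine-perfectoidness I would verify (i) uniformity, which follows because $R\subseteq R^\circ$ is an open bounded subring and $R$ is integrally closed in itself modulo the integral closure, and (ii) that the Frobenius on $R^\circ/p$ is surjective, which reduces to the isomorphism $\varphi:R/\pi\xrightarrow{\sim}R/\pi^p$ together with $p\in\pi^p R$. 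Setting $R^+$ equal to the integral closure of $R$ in $R[1/\pi]$, the pair $(R[1/\pi],R^+)$ is an affinoid perfectoid Huber pair in the sense of Scholze, and I define $(\Spf R)_\eta:=\Spa(R[1/\pi],R^+)$.

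The next step is to globalize. Lemma \ref{lemLocalizationperf} shows that the basic open $\Spf R\langle f^{-1}\rangle\hookrightarrow \Spf R$ is again affine perfectoid. The key compatibility I would prove is that passing to the generic fiber sends this inclusion to the rational subset $\{x:|f(x)|=1\}\subseteq \Spa(R[1/\pi],R^+)$; this follows because localizing at $f$ and $\pi$-adically completing corresponds exactly to the rational localization where $f$ becomes a unit of norm one. Combined with the sheaf property of structure presheaves on affinoid perfectoid spaces (Scholze, Kedlaya--Liu), this shows the assignment $\Spf R\leadsto \Spa(R[1/\pi],R^+)$ is functorial and Zariski-local on the source. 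Given an arbitrary $\f{X}\in\Intperf_{\n{O}_F}$ with an affine perfectoid cover $\f{X}=\bigcup_i U_i$, the $(U_i)_\eta$ glue along the perfectoid spaces $(U_i\cap U_j)_\eta$ to produce a well-defined perfectoid space $\f{X}_\eta$ over $\Spa(F,\n{O}_F)$, and functoriality in $\f{X}$ is immediate.

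For the universal property I reduce to the affine situation. Given $\n{Y}$ perfectoid and a morphism $(\n{Y},\s{O}_{\n{Y}}^+)\to (\f{X},\s{O}_{\f{X}})$ of locally and topologically ringed spaces, work locally and assume $\f{X}=\Spf R$ and $\n{Y}=\Spa(S,S^+)$ is affinoid perfectoid. The datum is a continuous homomorphism $R\to S^+$ compatible with the topologies (so $\pi$ maps to a topologically nilpotent element). Inverting $\pi$ gives $R[1/\pi]\to S$, and since $S^+$ is integrally closed in $S$ the image of $R^+$ (the integral closure of $R$) lands in $S^+$. This produces a unique morphism of Huber pairs $(R[1/\pi],R^+)\to (S,S^+)$, i.e.\ a unique $\n{Y}\to \f{X}_\eta$ through which the original morphism factors. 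Uniqueness globally follows from uniqueness locally together with the universality of the generic fiber under the identification $\s{O}^+_{\f{X}_\eta}|_{U_\eta}=\s{O}_U$ completed $\pi$-adically.

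The principal obstacle I anticipate is the careful bookkeeping required to show that the generic fiber functor commutes with $\pi$-adic localization in a way compatible with the sheaf theory of perfectoid spaces. In other words, one must confirm that the Zariski open $D(f)\subset\Spf R$ corresponds exactly to the rational subset $\{|f|=1\}\subset\Spa(R[1/\pi],R^+)$ as structured spaces (not merely as topological spaces), so that gluing proceeds without pathology; this reduces to the Kedlaya--Liu/Scholze sheafiness results for perfectoid affinoids, but one must trace through the identifications carefully, especially to verify the universal property against perfectoid sources whose structure sheaf sections need not come from an integral perfectoid ring on the nose.
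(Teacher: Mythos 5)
Your overall route matches the paper's: reduce to the affine case, establish that $R[\tfrac{1}{\pi}]$ is Fontaine-perfectoid, glue using Lemma~\ref{lemLocalizationperf}, and obtain the universal property by identifying a morphism of ringed spaces with a ring map $R\to S^+$ and then passing to Huber pairs. The gluing step (identifying $(\Spf R\langle f^{-1}\rangle)_\eta$ with the rational subset $\{|f|=1\}$, which equals $\{|1|\le|f|\}$ in $\Spa(R[\tfrac{1}{\pi}],R^+)$) and the universal-property argument are both correct and track the paper's proof, which handles these steps just as briefly.

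The one step that does not hold up as written is your claimed verification of \emph{uniformity}. You assert that uniformity "follows because $R\subseteq R^\circ$ is an open bounded subring and $R$ is integrally closed in itself modulo the integral closure." The existence of an open bounded subring $R$ of $R[\tfrac{1}{\pi}]$ says nothing about boundedness of $R^\circ$ --- every Tate ring has an open bounded ring of definition, but not every Tate ring is uniform --- so this does not establish the needed claim that $R^\circ$ is itself bounded. Moreover, $R$ is \emph{not} assumed to be integrally closed in $R[\tfrac{1}{\pi}]$ (the statement introduces $R^+$ as a possibly larger ring precisely because of this), so the phrase about integral closure does not give you what you want. The actual argument uses the Frobenius isomorphism $R/\pi\xrightarrow{\sim}R/\pi^p$ to show that any power-bounded element of $R[\tfrac{1}{\pi}]$ already lies in a bounded neighbourhood of $R$; this is exactly what the paper outsources to Lemma~3.21 of \cite{MR3905467}. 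You should either reproduce that Frobenius argument or cite the reference as the paper does; the remainder of your proof stands once that piece is in place.
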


\begin{remark}
The universal property of the functor $(-)_{\mu}$  is Huber's characterization of the generic fiber of formal schemes in the case of perfectoid spaces,  see \cite[Prop.  4.1]{MR1306024}. 
\end{remark}

\begin{proof}
The first statement  is Lemma 3.21 of \cite{MR3905467}.   For the construction of the functor,  let $\f{X}$ be a perfectoid formal  scheme over $\n{O}_F$.   One can define $\f{X}_{\eta}$ to be the glueing of the affinoid spaces $\Spa(R[\frac{1}{\varpi}],, R^+)$ for $\Spf R\subset \f{X}$ an open  perfectoid formal afine subscheme,  this is well defined after Lemma \ref{lemLocalizationperf}.

We prove the universal property of the generic fiber functor.   Let $\n{Y}\in \Perf_K$ and let $f:(\n{Y}, \s{O}^+_{\n{Y}} )\rightarrow (\f{X}, \s{O}_{\f{X}})$ be  a morphism of locally and topologically ringed spaces.  First, if $\n{Y}=\Spa(S,S^+)$ is affinoid  perfectoid  and $\f{X}=\Spf R$ is perfectoid  formal  affine,   $f$ is determined by  the global sections map $f^*: R\rightarrow S^+$.  Then,  there exists a unique map of affinoid perfectoid rings $f^*_{\eta}: (R[\frac{1}{\varpi}], R^+)\rightarrow (S,S^+)$ extending $f^*$.  By  glueing  morphisms from  affinoid open subsets for a general $\n{Y}$,  one gets that $\f{X}_\eta:= \Spa(R[\frac{1}{\varpi}], R^+)$ satisfies the universal property.   For an arbitrary $\f{X}$,  one can glue the generic fibers of the   open perfectoid   formal   affine   subschemes  of $\f{X}$.
\end{proof}

We end this subsection with a theorem which reduces  the proof of the  perfectoidness of the integral modular curve  at any tame level  to the level $\Gamma(Np^\infty)$.

 \begin{theo}[Kedlaya-Liu]
 \label{theoinvperf}
 Let $A$ be a  perfectoid ring on which a finite group $G$ acts by continuous ring homomorphisms.  Then the invariant subring $A^G$ is  a perfectoid ring.  Moreover, if $R\subset A$ is an open integral perfectoid subring of $A$ then $R^G$ is an open integral perfectoid subgring of $A^G$. 
 \proof
 The first statement is Theorem  3.3.26 of \cite{kedlaya2016relative}.  The second statement follows from the description of open perfectoid subrings of $A$ as $p$-power closed subrings of $A^\circ$,  i.e open subrings of $A^\circ$ such that $x^p \in R$ implies $x\in R$, see Corollary 2.2 of  \cite{MorrowFoundPerfectoid}. 
 \endproof
 \end{theo}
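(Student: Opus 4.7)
The plan is to treat the two statements separately. The first is a genuinely deep theorem of Kedlaya-Liu, whose main content is to control the uniform structure of $A^G$ compatibly with the perfectoid structure of $A$; I would invoke it as a black box. The second is a formal consequence of the first combined with a ring-theoretic characterization of open integral perfectoid subrings due to Morrow.

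For the first statement, the heuristic reason is transparent: the invariant subring of a perfect ring of characteristic $p$ under a finite group action is again perfect, because $p$-th roots are unique and hence automatically $G$-invariant. Lifting this to characteristic zero requires verifying that $A^G$ is still uniform and that Frobenius is surjective modulo a pseudo-uniformizer on $A^G$ itself, not merely on $A$. This is exactly Theorem 3.3.26 of \cite{kedlaya2016relative}.

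For the second statement, invoke Corollary 2.2 of \cite{MorrowFoundPerfectoid}: the open integral perfectoid subrings of a perfectoid ring $A$ are precisely the open $p$-power closed subrings of $A^\circ$, that is, the open subrings $R \subset A^\circ$ with the property that $x \in A^\circ$ and $x^p \in R$ imply $x \in R$. Given such an $R \subset A$, set $R^G := R \cap A^G$. It is immediate that $R^G$ is open in $A^G$ and that $R^G \subset (A^G)^\circ = A^\circ \cap A^G$. For the $p$-power closedness of $R^G$ inside $(A^G)^\circ$: if $x \in (A^G)^\circ$ and $x^p \in R^G$, then $x^p \in R$ and $x \in A^\circ$, so $x \in R$ by hypothesis on $R$; since $x \in A^G$, we conclude $x \in R^G$. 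Applying Morrow's characterization in the reverse direction to the perfectoid ring $A^G$ finishes the argument. The main obstacle is thus located entirely in the first statement, which I would not attempt to re-derive.
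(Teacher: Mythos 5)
Your proposal matches the paper's proof exactly: cite Theorem 3.3.26 of Kedlaya--Liu for the first statement, and deduce the second from Morrow's characterization (Corollary 2.2) of open integral perfectoid subrings as $p$-power closed open subrings of $A^\circ$. You have merely spelled out the short verification that $R^G = R \cap A^G$ is open, lands in $(A^G)^\circ$, and inherits $p$-power closedness, which the paper leaves implicit.
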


\subsection*{The main construction}

Let $\n{X}(Np^\infty)$ denote  Scholze's perfectoid modular curve \cite{scholzeTorsCoho2015}.  Let $\bb{Z}_p^{cyc}$ be the $p$-adic completion of the $p$-adic cyclotomic integers $\varinjlim_n \bb{Z}_p[\mu_{p^n}]$.     Let $\f{X}(Np^n)$ be the completion of  $\mc{n}$ along its special fiber.  We have the following theorem

\begin{theo}
\label{theoexistperfint}
The inverse limit $\f{X}(Np^\infty):=\varprojlim_{n} \f{X}(Np^n)$ is a  $p$-adic perfectoid  formal  scheme,  it admits a structural map to  $\Spf \bb{Z}_p^{cyc}[\mu_{N}]$,  and its  generic fiber is naturally isomorphic to the perfectoid modular curve $\n{X}(Np^\infty)$.  Furthermore,  let $n\geq 0$, let  $\Spec R \subset X(Np^n)$ be an affine open subscheme, $\Spf \widehat{R}$  its $p$-adic completion and $\Spf \widehat{R}_{\infty}$ the inverse image in $\f{X}(Np^\infty)$.  Then $\widehat{R}_{\infty}= \left(\widehat{R}_{\infty}[\frac{1}{p}]\right)^{\circ}$ and 
\[
(\Spf \widehat{R}_{\infty})_{\eta}= \Spa(\widehat{R}_{\infty}[\frac{1}{p}], \widehat{R}_{\infty}). 
\]

\end{theo}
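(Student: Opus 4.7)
My plan is to reduce the perfectoidness of $\f{X}(Np^\infty)$ to a local computation at the closed points of the special fiber by faithfully flat descent, combining this with the explicit descriptions of the completed local rings established in Section \ref{RecallsModularCurves}, and then to identify the generic fiber with Scholze's perfectoid modular curve via the universal property from Proposition \ref{PropgenericFiberFunctor} together with the tilde-limit characterization.

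Since each transition morphism $\f{X}(Np^{n+1}) \to \f{X}(Np^n)$ is finite, the inverse limit exists in the category of formal schemes, and locally $\widehat{R}_\infty$ is the $p$-adic completion of $\varinjlim_m \widehat{R}_m$; the structural map to $\Spf \bb{Z}_p^{cyc}[\mu_N]$ is furnished by the Weil pairing maps $e_{Np^n}$ together with the fixed compatible system of roots of unity. Fix a pseudo-uniformizer $\varpi \in \bb{Z}_p^{cyc}$ with $p \in \varpi^p \bb{Z}_p^{cyc}$ (e.g.\ $\varpi = \zeta_{p^2}-1$). The ring $\widehat{R}_\infty$ is $p$-torsion free, since this property is preserved by filtered colimits and by $p$-adic completion, so the remaining point is to verify that Frobenius induces an isomorphism $\widehat{R}_\infty/\varpi \xrightarrow{\sim} \widehat{R}_\infty/\varpi^p$.

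The key step is faithfully flat descent: the Noetherian ring $\widehat{R}_n$ maps faithfully flatly to $\prod_{x} \widehat{\n{O}}_{\f{X}(Np^n), x}$, where $x$ ranges over the finitely many closed points of $\Spf \widehat{R}_n$ (all lying in the special fiber). Taking the colimit in $n$ and $p$-adically completing, it suffices to check, at each closed point $x$, that the $p$-completion $\widehat{A}_{x,\infty}$ of $\varinjlim_n A_{x_n}$ is an integral perfectoid ring. This is exactly what Section \ref{RecallsModularCurves} accomplishes. At an ordinary point, Proposition \ref{theodefringord} gives $A_{x_n} \cong \breve{\bb{Z}}_p[\zeta_{p^n}][[T_n]]$ with transition $1+T_n \mapsto (1+T_{n+1})^p$, whose $p$-completed colimit is a perfectoid algebra of standard shape over $\breve{\bb{Z}}_p^{cyc}$. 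At a cusp, Proposition \ref{Theorem structure cusps 2} yields the analogous description with variable $q^{1/Np^n}$. At a supersingular point, Corollary \ref{corsupersingfrob} directly supplies surjectivity of Frobenius on $\varinjlim_n A_{x_n}/p$, which combined with $p$-torsion-freeness and $p$-adic completeness yields the perfectoid property.

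Once perfectoidness is established, the integral closure identity $\widehat{R}_\infty = (\widehat{R}_\infty[\tfrac{1}{p}])^\circ$ follows from the general fact that an integral perfectoid ring is integrally closed in its Fontaine generic fiber, and the description $(\Spf \widehat{R}_\infty)_\eta = \Spa(\widehat{R}_\infty[\tfrac{1}{p}], \widehat{R}_\infty)$ is then the content of Proposition \ref{PropgenericFiberFunctor}. Finally, the natural maps $\f{X}(Np^\infty)_\eta \to \n{X}(Np^n)$ exhibit $\f{X}(Np^\infty)_\eta$ as a tilde-limit of the $\n{X}(Np^n)$: the underlying spectral space is the inverse limit because the formal schemes are (with finite transition maps), and the density of $\varinjlim_n \s{O}^+_{\n{X}(Np^n)}$ inside $\s{O}^+_{\f{X}(Np^\infty)_\eta}$ is immediate from $\widehat{R}_\infty$ being the $p$-adic completion of $\varinjlim_m \widehat{R}_m$; Scholze's uniqueness theorem then produces the desired isomorphism with $\n{X}(Np^\infty)$. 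The main obstacle is precisely the descent step, where one must carefully track how Frobenius surjectivity modulo $\varpi$ propagates through products of completed local rings, the colimit in $n$, and $p$-adic completion; the cleanest route is to argue elementwise on $\varinjlim_m \widehat{R}_m/\varpi$ using faithful flatness at each finite level, exploiting that each tower of completed local rings becomes perfect in the colimit.
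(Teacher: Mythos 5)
Your overall strategy is the same as the paper's: reduce to the completed local rings at $\overline{\bb{F}}_p$-points via faithfully flat descent, invoke the explicit descriptions from Section \ref{RecallsModularCurves}, and then identify the generic fiber via the tilde-limit characterization. However, the descent step — which you yourself flag as ``the main obstacle'' — is where your sketch does not quite work as written, and the paper handles it more carefully.

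First, a minor error: $\Spf \widehat{R}_n$ has \emph{infinitely} many closed points (its underlying space is an affine curve over $\bb{F}_p$), not finitely many; this does not break faithful flatness of $\widehat{R}_n \to \prod_x \widehat{\n{O}}_{\f{X}(Np^n),x}$, since a Noetherian ring maps faithfully flatly to the product of all its completed local rings, but it undermines the naive commutation with colimits. More seriously, you cannot simply ``take the colimit in $n$'' of the faithfully flat maps $\widehat{R}_n \to \prod_x \widehat{\n{O}}_{\f{X}(Np^n),x}$: infinite products do not commute with filtered colimits, and faithful flatness is not preserved under such operations without justification. The paper instead reasons as follows. Surjectivity of $\varphi: R_\infty/\pi \to R_\infty/p$ is a local property, so it may be checked after localizing at a single $\overline{\bb{F}}_p$-point $x$. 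Having localized, one base-changes along $S_{x_0} \to \widehat{S_{x_0}}$ (the completed local ring \emph{at the bottom of the tower}), which is faithfully flat because $S_{x_0}$ is Noetherian local. Because each $S_n$ is finite flat over $S_{n_0}$, this base change turns $S_{\infty,x}$ into $\varinjlim_n \widehat{S_{n,x_n}}$; a further argument using the \emph{finiteness of Frobenius} for a $\bb{Z}_p$-algebra of finite type is then needed to identify $\widehat{S_{n,x_n}}/p \otimes_{\varphi,\widehat{S_{x_0}}} \widehat{S_{x_0}}$ with $\widehat{S_{n,x_n}}/p$ itself. That last identification is not automatic and is a real step you omit.

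Two further points you elide: the explicit deformation-ring descriptions of Section \ref{RecallsModularCurves} live over $\breve{\bb{Z}}_p$, not over $\bb{Z}_p$, so the paper first replaces $R_n$ by $S_n = R_n \otimes_{\bb{Z}_p} \breve{\bb{Z}}_p$ — a profinite \'etale base change under which Frobenius surjectivity descends — before comparing with those descriptions; you apply the Section \ref{RecallsModularCurves} results without explaining this reduction. Also, you only address surjectivity of Frobenius; the injectivity of $\varphi: \widehat{R}_\infty/\pi \to \widehat{R}_\infty/p$ must be established separately, and the paper deduces it from the fact that $\widehat{R}_\infty$ is integrally closed in $\widehat{R}_\infty[\tfrac{1}{p}]$ — a step you mention in passing for the identification of power-bounded elements, but do not connect to the injectivity of Frobenius. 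Your identification of the generic fiber and your reduction of the integral-closure statement to Proposition \ref{PropgenericFiberFunctor} are correct and match the paper.
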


\begin{remark}
The previous result gives a different proof of Scholze's theorem that the generic fiber $\n{X}(Np^\infty)$ is a perfectoid space by more elementary means. 
\end{remark}

\begin{proof}
 The maps between  the (formal) modular curves are finite and flat.  Then  $\fmc{\infty}:=\varprojlim_{n} \fmc{n}$ is a flat $p$-adic   formal scheme over $\bb{Z}_p$.   Fix  $n_0\geq 0$,   let $\Spec R\subset \mc{n_0}$  and  $\Spf \widehat{R}\subset \fmc{n_0}$ be as in the theorem.   For $n\geq n_0$,  let $\Spec R_n$ (resp. $\Spf \widehat{R}_n$) denote the inverse image  of $\Spec R$ (resp. $\Spf \widehat{R}$) in $\Spec \mc{n}$ (resp. $\fmc{n}$).  Let $R_{\infty}:=\varinjlim_n R_n$ and let $\widehat{R}_{\infty}$ be its $p$-adic completion. 
  
\begin{claim}  $\widehat{R}_{\infty}$ is an integral perfectoid ring,   equal to  $(\widehat{R}_{\infty}[\frac{1}{p}])^{\circ}$. 
\end{claim}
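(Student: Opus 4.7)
My approach is to reduce the perfectoidness of $\widehat{R}_{\infty}$ to a stalk-by-stalk verification at the closed $\overline{\bb{F}}_p$-points of the special fiber, using the deformation-theoretic results of the preceding propositions together with faithfully flat descent.

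As a first step, the $\bb{Z}_p$-flatness of each $R_n$ together with the finite-flatness of the transition maps $R_n \to R_{n+1}$ imply that $R_{\infty}$ is $\bb{Z}_p$-flat. Consequently $p$ is a non-zero divisor on $\widehat{R}_{\infty}$ and $\widehat{R}_{\infty}/p^s = R_{\infty}/p^s$ for all $s \geq 1$. After restricting to the connected component indexed by $\zeta_N$, each $R_n$ is a $\breve{\bb{Z}}_p[\zeta_{p^n}]$-algebra via the Weil pairing, so $\widehat{R}_{\infty}$ is a $\breve{\bb{Z}}_p^{cyc}$-algebra. I would then fix a pseudo-uniformizer $\pi \in \breve{\bb{Z}}_p^{cyc}$ with $p \in \pi^p \breve{\bb{Z}}_p^{cyc}$ (for instance $\pi = \zeta_{p^2}-1$); by flatness $\pi$ is a non-zero divisor on $\widehat{R}_{\infty}$.

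Next, I would reduce the perfectoid condition to its analogue at stalks. Write $\widehat{A}_{x,\infty}$ for the $\pi$-adic completion of $\varinjlim_n A_{x_n}$, where $(x_n)$ ranges over compatible systems of $\overline{\bb{F}}_p$-points $x_n \in \Spec(R_n/p)$. The completion maps $R_n/p \to \prod_{x_n} A_{x_n}/p$ are faithfully flat at each finite level, and passing to the colimit and $\pi$-adic completion produces a faithfully flat map $\widehat{R}_{\infty}/\pi^s \to \prod_x \widehat{A}_{x,\infty}/\pi^s$ for every $s$. By descent it then suffices to prove that each $\widehat{A}_{x,\infty}$ is integral perfectoid with pseudo-uniformizer $\pi$. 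The identification $\widehat{R}_{\infty} = (\widehat{R}_{\infty}[1/p])^{\circ}$ would then follow from normality of each $R_n$ (since $X(Np^n)$ is regular) together with standard properties of power-bounded elements in perfectoid Tate rings.

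The perfectoidness of the stalks splits into three cases. For ordinary $x$, Proposition \ref{theodefringord} gives $A_{x_n} \cong \breve{\bb{Z}}_p[\zeta_{p^n}][[T_n]]$ with the transition relation $(1+T_{n+1})^p = 1+T_n$, so $\widehat{A}_{x,\infty}$ is the $\pi$-adic completion of $\breve{\bb{Z}}_p^{cyc}[[T^{1/p^{\infty}}]]$, manifestly integral perfectoid. For a cusp, Proposition \ref{Theorem structure cusps 2} yields the analogous description with $q^{1/Np^n}$ in place of $T_n$, again perfectoid in the limit. For a supersingular $x$, Corollary \ref{corsupersingfrob} supplies the surjectivity of the Frobenius on $\varinjlim_n A_{x_n}/p$; combined with $\breve{\bb{Z}}_p^{cyc}$-flatness, this should upgrade to the isomorphism $\widehat{A}_{x,\infty}/\pi \xrightarrow{\sim} \widehat{A}_{x,\infty}/\pi^p$ required by the definition.

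The main obstacle will be the supersingular case: lacking any explicit model for $A_{x_n}$, Corollary \ref{corsupersingfrob} only furnishes surjectivity of Frobenius on $\varinjlim_n A_{x_n}/p$, and one must leverage the $\breve{\bb{Z}}_p^{cyc}$-algebra structure and the explicit behaviour of Frobenius on $\breve{\bb{Z}}_p^{cyc}/p$ to upgrade this to the isomorphism condition. A secondary technical point is ensuring that $\pi$-adic completion commutes suitably with the colimits and with the infinite products over closed points so that faithfully flat descent can be applied uniformly in $s$.
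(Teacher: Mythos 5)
Your high-level strategy matches the paper's: reduce the perfectoid condition to the completed stalks at $\overline{\bb{F}}_p$-points, then treat the ordinary, cuspidal, and supersingular cases separately. But there are three concrete gaps, and the third one, the descent step, is the serious one.

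First, injectivity of the Frobenius $\varphi: R_\infty/\pi\to R_\infty/p$. You propose to check it stalk by stalk, but at a supersingular point you have no explicit model for $A_{x_n}$, and Corollary \ref{corsupersingfrob} only supplies surjectivity; flatness over $\breve{\bb{Z}}_p^{cyc}$ will not by itself force injectivity. The paper sidesteps this entirely by proving injectivity globally: if $x^p\in pR_\infty=\pi^p R_\infty$ then $(x/\pi)^p\in R_\infty$, and since $R_\infty$ is integrally closed in $R_\infty[1/p]$ one gets $x\in\pi R_\infty$. You need some such argument; the stalk-by-stalk route dead-ends at supersingular points.

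Second, the identity $\widehat{R}_\infty=(\widehat{R}_\infty[\frac1p])^\circ$ does not follow from ``standard properties of power-bounded elements in perfectoid Tate rings''; an open integrally closed subring of a perfectoid Tate ring need not be the full ring of power-bounded elements. The paper gives a direct argument: each $\widehat{R}_n=(\widehat{R}_n[\frac1p])^\circ$ by regularity of $R_n$, then $\widehat{R}_\infty\cap\widehat{R}_n[\frac1p]=\widehat{R}_n$ by faithful flatness, and a boundedness argument ($px^s\in\widehat{R}_n$ for all $s$ forces $x\in\widehat{R}_n$) shows $\varinjlim_n\widehat{R}_n$ is dense in the power-bounded subring. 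Citing perfectoidness here is circular because you also want to use this identity to construct the generic fiber.

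Third, and most importantly, your descent step is not justified as written. You assert a faithfully flat map $\widehat{R}_\infty/\pi^s\to\prod_x\widehat{A}_{x,\infty}/\pi^s$; but the real difficulty is not flatness, it is relating the Frobenius on the completed local rings to the Frobenius on the uncompleted ones. The paper's argument base-changes $\varphi: S_{\infty,x}/\pi\to S_{\infty,x}/p$ along the faithfully flat map $S_{x_0}\to\widehat{S_{x_0}}$, and then needs to identify $S_{\infty,x}/p\otimes_{\varphi,S_{x_0}}\widehat{S_{x_0}}$ with $\varinjlim_n\widehat{S_{n,x_n}}/p$. That identification is not formal: it uses that $R_n$ is of finite type over $\bb{Z}_p$, so the absolute Frobenius $\varphi:R_n/\pi\to R_n/p$ is finite, hence the Frobenius-twisted tensor product commutes with the $\f{m}$-adic completion. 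You flag this as a ``secondary technical point,'' but it is exactly the step on which the reduction to stalks rests; without it, knowing each $\widehat{A}_{x,\infty}$ is perfectoid does not descend to a statement about $\widehat{R}_\infty$.
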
 
 
 Suppose that the claim holds,  it is left to show that $\f{X}(Np^\infty)_{\eta}$ is the perfectoid modular curve $\n{X}(Np^\infty)$.   There are natural maps of  locally and topologically  ringed spaces \[(\n{X}(Np^n),  \s{O}^+_{\n{X}(Np^n)})\rightarrow (\f{X}(Np^n), \s{O}_{\f{X}(Np^n)}).\] 
We have $\n{X}(Np^{\infty})\sim \varprojlim_n \n{X}(Np^n)$, where we use the notion of tilde limit  \cite[Def.  2.4.1]{scholze2013moduli}.   Then,  by $p$-adically completing  the inverse limit of the tower,  we obtain a map of locally and topologically  ringed spaces
\[
(\n{X}(Np^\infty), \s{O}^+_{\n{X}(Np^\infty)})\rightarrow ( \f{X}(Np^\infty),\s{O}_{\f{X}}(Np^\infty)). 
\]
 This provides a map $f:\n{X}(Np^\infty)\rightarrow (\f{X}(Np^\infty))_{\eta}$.  Since 
  \[
(\Spf \widehat{R}_{\infty})_{\eta}  =\Spa(\widehat{R}_\infty[1/p], \widehat{R}_{\infty})\sim \varprojlim_n \Spa(\widehat{R}_n[1/p], \widehat{R}_{n}),
 \]
 and the  tilde limit is unique in the category of perfectoid spaces \cite[Prop. 2.4.5]{scholze2013moduli},   the map $f$ is actually an isomorphism. 
 \end{proof}

\proof[Proof of the Claim] 
First,   by Lemma A.2.2.3 of \cite{Heuer2019PerfectoidGO} the ring $(\widehat{R}_n[\frac{1}{p}])^{\circ}$ is the  integral closure of $\widehat{R}_n$ in its generic fiber. By Lemma 5.1.2 of \cite{BhattLectures} and the fact that $R_n$ is a regular ring one gets that $\widehat{R}_n=(\widehat{R}_n[\frac{1}{p}])^{\circ}$ for all $n\geq n_0$.    As $R_{\infty}$ is faithfully flat over $R_n$ for all $n$,  one easily checks that $\widehat{R}_{\infty}\cap  \widehat{R}_n[\frac{1}{p}]=\widehat{R}_{n}$.  Moreover,  $R_{\infty}$ is integrally closed in its generic fiber, and by Lemma 5.1.2 of \textit{loc. cit.} again one obtains that  $\widehat{R}_{\infty}$ is integrally closed in $\widehat{R}_{\infty}[\frac{1}{p}]$.  Let $x\in \widehat{R}_n[\frac{1}{p}]$ be power bounded in $\widehat{R}_{\infty}[\frac{1}{p}]$,  then  $px^s\in  \widehat{R}_{\infty}$ for all $s\in \bb{N}$,  in particular $\{px^s\}_{s\in \bb{N}}\subset \widehat{R}_n$ which implies that $x\in \widehat{R}_n$.   This shows that $\varinjlim_{n} \widehat{R}_n$ is dense in $(\widehat{R}_{\infty}[\frac{1}{p}])^{\circ}$, taking $p$-adic completions one gets $\widehat{R}_{\infty}=(\widehat{R}_{\infty}[\frac{1}{p}])^{\circ}$.

 The Weil pairings evaluated at the universal Drinfeld basis $(P_{univ,Np^n},Q_{univ,Np^n})$  of $E[Np^n]$ induce compatible morphisms  $\f{X}(Np^n)\rightarrow \Spf \bb{Z}_p[\mu_{Np^n}]$. Taking inverse limits one gets the structural map $\f{X}(Np^\infty)\rightarrow \Spf  \bb{Z}_p^{cyc}[\mu_N]$.  In particular,  there exists  $\pi\in \widehat{R}_{\infty}$ such that $\pi^p=pa$ with $a\in \bb{Z}_p^{cyc,\times}$.    To prove that $\widehat{R}_{\infty}$ is integral perfectoid we need to show that the absolute  Frobenius map 
\[
\varphi: R_{\infty}/\pi  \longrightarrow R_{\infty}/p 
\]
is an isomorphism. The strategy is to prove this fact for the completed local rings of the stalks  of  $\Spec R_{\infty}/p$  and use faithfully flat descent.    

Injectivity is easy,  it follows from the fact that  $R_{\infty}$ is integrally closed in $R_{\infty}[1/p]$.    To show that  $\varphi$ is surjective,  it is enough to prove that the absolute Frobenius is surjective after a profinite \'etale base change.  Indeed, the relative Frobenius is an isomorphism for profinite \'etale base changes.   Let $S= R\otimes_{\bb{Z}_p} \breve{\bb{Z}}_p$ and let $\widehat{S}= \widehat{R}\widehat{\otimes}_{\bb{Z}_p} \breve{\bb{Z}}_p$ be the $p$-adic completion of $S$. We use similar notation for $S_n= R_n\otimes_{\bb{Z}_p}  \breve{\bb{Z}}_p$, $\widehat{S}_n$, $S_\infty$ and $\widehat{S}_{\infty}$.  We have to show that the absolute Frobenius 
 \begin{equation*}
 \varphi: S_{\infty}/\pi\rightarrow S_{\infty}/p
 \end{equation*}
 is surjective. 
 
  Let $x=(x_{n_0}, x_{n_0+1}, \cdots, x_n ,\cdots )$ be a $\overline{\bb{F}}_p$-point of $\Spf \widehat{S}_{\infty}$ which is an inverse limit of $\overline{\bb{F}}_p$-points of $\Spf \widehat{S}_n$. Write $x_{n_0}$ simply by $x_0$.  Then, it is enough to show that $\varphi$ is surjective after taking the stalk at $x$.   Let $S_{n,x_n}$ be the localization of $S_n$ at the prime $x_n$ and $S_{\infty, x}=\varinjlim_n S_{n, x_n}$.    Let $\widehat{S_{n,x_n}}$ be the completion of $S_{n,x_n}$ along its maximal ideal.   Recall that the ring $S_{n}$ is finite flat over $S$, this implies that  $\widehat{S_{n,x_n}}= S_{n,x_n}\otimes_{S_{x_0}} \widehat{S_{x_0}}$.

The scheme $\mc{n}$ is of finite type over $\bb{Z}_p$,  in particular every point has a closed point as specialization.  Thus,  by faithfully flat descent, we are reduced to prove that for every $\overline{\bb{F}}_p$-point  $x \in \Spf S_\infty/p= \varprojlim \Spf  S_n/p$,  the $\widehat{S_{x_0}}$-base change of 
\[
\varphi:  S_{\infty,x}/\pi\rightarrow S_{\infty,x}/p
\]
 is surjective (even an isomorphism).   We have the following commutative diagram 

\begin{equation*}
\begin{tikzcd}[row sep=small]
S_{\infty,x}/\pi \otimes_{S_{x_{0}}}\widehat{S_{x_{0}}} \arrow[r,"\varphi\otimes \id"] & S_{\infty,x}/p \otimes_{\varphi, S_{x_{0}}} \widehat{S_{x_{0}}}  \\
\varinjlim \widehat{S_{n,x_n}}/\pi \arrow[r] \arrow[u,equal]& \varinjlim(\widehat{S_{n,x_n}}/p \otimes_{\varphi, \widehat{S_{x_{0}}}}  \widehat{S_{x_{0}}}) \arrow[u,equal].
\end{tikzcd}  
\end{equation*} 
The ring $R_{n}$ is of finite type over $\bb{Z}_{p}$,  so that the absolute Frobenius $\varphi: R_{n}/\pi \rightarrow R_{n}/p $ is finite.   This implies that  $\widehat{S_{n,x_n}}/p$ is a finite $\widehat{S_{x_0}}$-module  via the module structure induced by the Frobenius.   Then,  the following composition is an isomorphism
\[
\widehat{S_{n,x_n}}/p \otimes_{\varphi, \widehat{S_{x_{0}}}} \widehat{S_{x_0}}\rightarrow  \varprojlim_m \widehat{S_{n,x_n}}/(p ,  \f{m}^{mp}_{S} ) \cong\widehat{S_{n,x_n}}/p ,
\]
where $\f{m}_S$  is  the maximal ideal of $S_{x_{0}}$.  
Thus, we are reduced   to prove  that the absolute Frobenius $\varphi: \varinjlim_n \widehat{S_{n,x_n}}/\pi \rightarrow \varinjlim_n \widehat{S_{n,x_n}}/p $ is surjective. Finally,  we deal with  the cusps, the supersingular and the ordinary points separately;  we use the  descriptions of   Section \ref{RecallsModularCurves}:

\begin{itemize}

\item \textbf{In the ordinary case},  the local ring $\widehat{S_{n,x_n}}$ is isomorphic to $ \breve{\bb{Z}}_p[\zeta_{p^n}][[X_n]]$. From the proof of  Proposition \ref{theodefringord}, one checks  that  the inclusion $\widehat{S_{n,x_n}}\rightarrow \widehat{S_{n+1,x_{n+1}}}$ is given by $X_{n}=(1+X_{n+1})^p-1$. Then,  one obtains the surjectivity of Frobenius when reducing modulo $p$. 

\item \textbf{The supersingular case}  is Corollary \ref{corsupersingfrob}. 

\item Finally, if we are dealing with a \textbf{cusp} $x$, the ring $\widehat{S_{n,x_n}}$ is isomorphic to $\breve{\bb{Z}}_p[\zeta_{p^{n}}][[q^{1/Np^{n}}]]$  and $\widehat{S_{n,x_n}}\rightarrow \widehat{S_{n+1,x_{n+1}}}$ is the natural inclusion by Proposition \ref{Theorem structure cusps 2}. The surjectivity of $\varphi$ is clear.
\end{itemize} 
\endproof

\subsection*{Relation with Lurie's stack}

In this subsection we make more explicitly  the relation between Lurie's construction of $\f{X}(Np^\infty)$ and the one presented in this document.  The key result   is the following  theorem 

\begin{theo}[ {\cite[Theo. 1.9]{JacobFullLevel2019}}]
Let $\pi\in \bb{Z}_p[\mu_{p^2}]$ be a pseudo-uniformizer such that $\pi^p=ap$  where $a$ is a unit.  For $n\geq 3$ there exists a unique morphism $\theta: \f{X}(Np^n)/ \pi \rightarrow \f{X}(Np^{n-1})/p$ making the following diagram commutative\footnote{The assumption $n\geq 3$ is only to guarantee that $\s{O}(\f{X}(Np^{n-1}))$ contains $\pi$.  }
\[
\begin{tikzcd}
\f{X}(Np^n)/p \ar[d] \ar[r, "\varphi"] & \f{X}(Np^n)/\pi  \ar[d] \ar[dl, "\theta"'] \\
\f{X}(Np^{n-1})/p \ar[r, "\varphi"] & \f{X}(Np^{n-1})/ \pi
\end{tikzcd}
\]
where $\varphi$ is the absolute Frobenius. 
\end{theo}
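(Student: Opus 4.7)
The plan is to leverage the perfectoidness of $\widehat{R}_\infty$ just established in Theorem~\ref{theoexistperfint}. For any affine perfectoid chart $\Spf \widehat{R}_\infty \subset \f{X}(Np^\infty)$ lying over compatible open affines $\Spf \widehat{R}_k \subset \f{X}(Np^k)$, the absolute Frobenius $\varphi \colon \widehat{R}_\infty/\pi \to \widehat{R}_\infty/p$, $\bar x \mapsto \overline{x^p}$, is an isomorphism: surjectivity is exactly what is proved in Theorem~\ref{theoexistperfint}, while injectivity follows from the identity $\widehat{R}_\infty = (\widehat{R}_\infty[1/p])^\circ$ by the standard argument (if $y \in \widehat{R}_\infty$ satisfies $y^p = pz$, then $(y/\pi)^p = z/a \in \widehat{R}_\infty$, so integral closure forces $y \in \pi \widehat{R}_\infty$). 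I will then define $\theta^*$ as the restriction of $\varphi^{-1}$ to $\widehat{R}_{n-1}/p \hookrightarrow \widehat{R}_\infty/p$, landing a priori in $\widehat{R}_\infty/\pi$, and check that the image lies in the finite-level subring $\widehat{R}_n/\pi$.

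The crucial step is thus the inclusion $\varphi^{-1}(\widehat{R}_{n-1}/p) \subseteq \widehat{R}_n/\pi$. Since $\widehat{R}_n/\pi \hookrightarrow \widehat{R}_\infty/\pi$ is injective (faithful flatness of the tower) and $\varphi^{-1}$ is a ring homomorphism, it suffices to produce, for a system of topological generators $y$ of $\widehat{R}_{n-1}$, an element $x \in \widehat{R}_n$ with $x^p \equiv y \pmod{p}$. By faithfully flat descent this can be tested at the completed stalks at the geometric points of $\f{X}(Np^{n-1})$, where the explicit descriptions of Section~\ref{RecallsModularCurves} apply: at an ordinary point the transition reads $1 + T_{n-1} = (1 + T_n)^p$ by Proposition~\ref{theodefringord}, whence $T_{n-1} \equiv T_n^p \pmod{p}$; at a cusp, $q^{1/Np^{n-1}} = (q^{1/Np^n})^p$ directly by Proposition~\ref{Theorem structure cusps 2}; and at a supersingular point, Proposition~\ref{propSuperSingFrob} exhibits $p$-th roots modulo $p$ of the generators $T(P^{(n-1)}_{univ})$ and $T(Q^{(n-1)}_{univ})$ of the maximal ideal. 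The compatibility on the cyclotomic base amounts to $\zeta_{p^{n-1}} = \zeta_{p^n}^p$.

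Commutativity of the two triangles is then tautological from $\varphi \circ \varphi^{-1} = \id$ and the compatibility of $\varphi^{-1}$ with the natural inclusions. For uniqueness, suppose $\theta^*_1, \theta^*_2 \colon \widehat{R}_{n-1}/p \to \widehat{R}_n/\pi$ both satisfy the required commutativities; then $\theta^*_i(x)^p \equiv x \pmod{p}$ in $\widehat{R}_n$ for each $x \in \widehat{R}_{n-1}$, whence $(\theta^*_1(x) - \theta^*_2(x))^p \equiv 0 \pmod{p}$. The injectivity of $\varphi \colon \widehat{R}_n/\pi \to \widehat{R}_n/p$, established by the same integral-closure argument applied to the regular ring $\widehat{R}_n$, then forces $\theta^*_1 = \theta^*_2$. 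The main obstacle in this strategy is the supersingular case: unlike ordinary points and cusps, where the required $p$-th roots are visible directly from the moduli description, Proposition~\ref{propSuperSingFrob} rests on the nontrivial Verschiebung power-series analysis of the universal $p$-torsion and is what prevents a purely formal proof.
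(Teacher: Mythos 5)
Your argument is correct, and it rests on the same substantive input as the paper's brief sketch, namely the three local computations of Propositions~\ref{theodefringord}, \ref{Theorem structure cusps 2} and \ref{propSuperSingFrob}. The organizational difference is worth noting. The paper constructs $\theta^*$ from the bottom up: it first produces the map at each completed local ring of $\f{X}(Np^n)$ at an $\overline{\bb{F}}_p$-point, where those propositions give it explicitly and show its uniqueness, and then descends to the localization and glues. You work from the top down: Theorem~\ref{theoexistperfint} already supplies a canonical global candidate, namely the inverse Frobenius $\varphi^{-1}$ on $\widehat{R}_\infty/\pi$, so the only thing left to verify is that its restriction to $\widehat{R}_{n-1}/p$ lands in the finite-level subring $\widehat{R}_n/\pi$, and this is exactly where the same three local cases reenter. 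Your packaging has the advantage that the map is manifestly unique (injectivity of Frobenius on the normal ring $\widehat{R}_n$, which uses $\pi\in\widehat{R}_n$, available once $n\geq 3$) and that the commutativity of both triangles is tautological; the paper leaves these points implicit and relies on the glueing step. The only place you should be more careful is the phrase ``by faithfully flat descent this can be tested at the completed stalks'': what one actually uses is that $\widehat{R}_n/\pi\hookrightarrow\widehat{R}_m/\pi$ (for $m\geq n$ large enough that $\varphi^{-1}(y)$ lives at level $m$) is a finite extension of noetherian rings, so an element of $\widehat{R}_m/\pi$ lies in $\widehat{R}_n/\pi$ if and only if its image does so in each $\mathfrak{m}$-adic completion for $\mathfrak{m}$ ranging over maximal ideals of $\widehat{R}_n/\pi$; this is a standard annihilator-of-a-module argument rather than descent, but the gap is purely terminological and easy to fill.
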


This theorem can be deduced from  the local computations made in Section \ref{RecallsModularCurves}.  Indeed, let $x_{n}\in \f{X}(Np^n)(\overline{\bb{F}}_p)$  be a $\overline{\bb{F}}_p$-point and $x_{n-1}\in \f{X}(Np^{n-1})(\overline{\bb{F}}_p)$ its image.  We have proven that there exists a unique map of the deformation rings at the points $x_{n-1}$ and $x_{n}$
\[
\theta^*: \widehat{\s{O}}_{\f{X}(Np^{n-1}), x_{n-1}} / p \rightarrow   \widehat{\s{O}}_{\f{X}(Np^{n}), x_{n}} / \pi
\]
making the following diagram commutative 
\[
\begin{tikzcd}
\widehat{\s{O}}_{\f{X}(Np^{n}), x_{n}} / p & \widehat{\s{O}}_{\f{X}(Np^{n}), x_{n}} / \pi  \ar[l,  "\varphi^*"']  \\
\widehat{\s{O}}_{\f{X}(Np^{n-1}),  x_{n-1}} / p   \ar[u] \ar[ru, "\theta^*"]& \widehat{\s{O}}_{\f{X}(Np^{n-1}),  x_{n-1}} / \pi   \ar[l ,  "\varphi^*"']  \ar[u]
\end{tikzcd}
\]
This corresponds  to Propositions \ref{theodefringord}, \ref{Theorem structure cusps 2} and \ref{propSuperSingFrob} for $x_{n}$ ordinary,  a cusp and a supersingular point respectively.  Then, one constructs $\theta$ using faithfully flat descent from the completed local rings to the localized local rings at $x_{n}$, and glueing using the uniqueness  of  $\theta^*$. 

\section{Cohomology and local duality for  curves over $\n{O}_K$}

\label{Reminders Duality}

Let $K$ be a finite extension of $\bb{Q}_p$ and $\n{O}_K$ its valuation ring. In this section we recall the Grothendieck-Serre duality theorem for  local complete intersection (lci) projective curves over $\n{O}_K$,  we will follow \cite{hartshorne1966residues}.   Then, we use Pontryagin duality to define a local duality paring of coherent cohomologies.

Let $X$ be a locally  noetherian scheme and  $\D(X)$  the derived category of $\s{O}_X$-modules. We use subscripts $c, qc$ on $\D(X)$ for the derived category of $\s{O}_{X}$-modules with coherent and quasi-coherent cohomology, the subscript $fTd$  refers to the subcategory of complexes with finite Tor dimension. We use superscripts $+,-,b$ for the derived category of bounded below, bounded above and bounded complexes respectively. For instance, $\D_{c}^b(X)_{fTd}$ is the derived category of bounded complexes of $\s{O}_{X}$-modules of finite Tor dimension and coherent cohomology. If $X=\Spec A$ is affine, we set $\D(A):=\D_{qc}(X)$,  the derived category of $A$-modules.

\begin{definition}
Let  $f:X\rightarrow Y$ be a morphism of schemes.  
\begin{enumerate}
\item  The map  $f$  is \textit{embeddable} if it factors as $X\xrightarrow{\iota} S \rightarrow Y $ where $\iota$ is a finite morphism and $S$ is smooth over $Y$. 
\item The map $f$  is \textit{projectively embeddable} if  it factors as composition $ X\xrightarrow{\iota} \bb{P}^n_Y\rightarrow Y$ for some $n\geq 0$,  where $\iota$ a finite morphism. 

\item The map $f$ is a \textit{local complete intersection} if locally on $Y$ and $X$ it factors as $X\xrightarrow{\iota} S \rightarrow Y $, where $S$ is a smooth $Y$-scheme,  and $\iota$ is a closed immersion defined by a regular sequence of $S$.  The  length of the regular sequence is called  the codimension of $X$ in $S$.   
\end{enumerate}
\end{definition}

\begin{theo}[Hartshorne]
\label{GrothSerreDuality}

 Let $f: X\rightarrow Y$ be a projectively embeddable morphism of  noetherian schemes of  finite Krull dimension.  Then there exist an exceptional inverse image functor $f^!: \mathrm{D}(Y)\rightarrow \mathrm{D}(X)$,  a trace map  $\Tr: \mathrm{R}f_*f^!\rightarrow 1$ in  $\mathrm{D}^+_{qc}(Y)$,  and  an adjunction 
\[
\theta: \R f_* \R \s{H}om_{X}(\n{F}, f^! \n{G}) \xrightarrow{\sim}  \R\s{H}om_Y( \R f_* \n{F}, \n{G}) 
\]
for $\s{F}\in \mathrm{D}^-_{qc}(X)$ and $\s{G}\in \mathrm{D}^+_{qc}(Y)$.

Moreover, the formation of the exceptional inverse image is functorial. More precisely, given a composition $\begin{tikzcd}  X \ar[r,"f"]& Y \ar[r,"g"]& Z\end{tikzcd}$ with $f,g$ and $gf$ projectively embeddable,  there is a natural isomorphism $(gf)^!\cong f^!g^!$. This functor  commutes with flat base change.  Namely,  let $u:Y'\rightarrow Y$ be a flat morphism, $f':X'\rightarrow Y'$ the base change of $X$ to $Y'$ and $v: X'\rightarrow X$ the projection. Then there is a natural isomorphism of functors $v^*f^!={f'}^! u^*$. 
\proof
 We refer to   \cite[Theo. III. 8.7]{hartshorne1966residues} for the existence of $f^!$,  its functoriality and compatibility with flat base change.  See Theorems    III. 10.5 and  III 11.1 of \textit{loc. cit.}  for the existence of $\Tr$ and the adjunction $\theta$ respectively.     
\endproof
\end{theo}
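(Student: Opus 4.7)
The plan is to construct $f^!$ by decomposing projectively embeddable morphisms into two basic pieces, namely smooth maps and finite maps, defining $f^!$ on each piece, and then showing the constructions glue together. First, for a smooth morphism $g\colon S\to Y$ of pure relative dimension $n$, I would set $g^!\n{G} := g^*\n{G}\otimes \omega_{S/Y}[n]$, where $\omega_{S/Y}=\det \Omega^1_{S/Y}$. Second, for a finite morphism $i\colon X\to S$, I would define $i^!\n{G} := \overline{i}^*\R\s{H}om_{\s{O}_S}(i_*\s{O}_X,\n{G})$, where $\overline{i}$ denotes the morphism of ringed spaces underlying $i$.  Then, given a projective embedding $f=i\circ g$ with $g\colon \bb{P}^n_Y\to Y$ the structure map and $i\colon X\to\bb{P}^n_Y$ a finite morphism, I would tentatively set $f^! := i^!g^!$.

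The first nontrivial step is to show that this does not depend on the chosen factorization and that the combination satisfies the composition formula $(gf)^!\cong f^!g^!$ for composable projectively embeddable morphisms.  The key local ingredient is the \emph{fundamental local isomorphism}: if $i\colon X\hookrightarrow S$ is a regular closed immersion of codimension $c$ between $Y$-smooth schemes, then $\ext^c_{\s{O}_S}(i_*\s{O}_X,\omega_{S/Y})\cong \omega_{X/Y}$, which one proves from the Koszul resolution of $i_*\s{O}_X$.  Iterating, this identifies the two natural candidates for $f^!$ arising from different embeddings into projective spaces of different dimensions, and also handles the passage between $\bb{P}^n_Y$ and other smooth compactifications.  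This compatibility, together with flat base change for $\R\s{H}om$ applied to a perfect complex, yields the flat base change isomorphism $v^*f^!\simeq {f'}^!u^*$.

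Next I would construct the trace map $\Tr\colon \R f_*f^!\to \mathrm{id}$.  For finite $i$ this is the evaluation morphism $i_*\R\s{H}om_{\s{O}_S}(i_*\s{O}_X,\n{G})\to \n{G}$, $\varphi\mapsto \varphi(1)$.  For the smooth projection $g\colon\bb{P}^n_Y\to Y$ one uses the classical computation $\R g_*\omega_{\bb{P}^n_Y/Y}[n]\cong \s{O}_Y$ to define $\Tr_g$; this is essentially the residue along the fibers.  Composing these two traces defines $\Tr_f$ for a projectively embeddable $f$, and again one must check independence of the embedding.  The adjunction $\theta$ is then obtained by assembling the two pieces: for finite $i$, $\theta$ is tensor--hom adjunction on the level of sheaves of rings; for smooth $g$, it is the relative duality on $\bb{P}^n_Y$, which reduces via the projection formula to the statement $\R g_*\omega_{\bb{P}^n_Y/Y}[n]\cong \s{O}_Y$ with its trace.

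The principal obstacle throughout is bookkeeping: verifying that the glued $f^!$, $\Tr$ and $\theta$ are independent of the chosen factorization and compatible with composition and flat base change.  Each of these is a delicate diagram chase involving Koszul resolutions and the fundamental local isomorphism, and the technically most delicate point is the transitivity isomorphism $(gf)^!\cong f^!g^!$, which forces one to reconcile the smooth and finite constructions in every intermediate factorization.  All of this is carried out in Chapter III of \cite{hartshorne1966residues}, to which we refer for the complete argument.
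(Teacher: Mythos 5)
Your sketch is a faithful summary of Hartshorne's construction in Chapter III of \cite{hartshorne1966residues} (define $g^\sharp$ on smooth maps, $i^\flat$ on finite maps, glue via the fundamental local isomorphism and Koszul resolutions, build the trace and adjunction piecewise, then check independence of factorization and transitivity), and you end by deferring to that reference for the detailed diagram chases --- which is precisely what the paper's own proof does by citing Theorems III.8.7, III.10.5, and III.11.1 of the same source. So the proposal is correct and takes essentially the same approach as the paper.
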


\begin{exam} 
\label{ExampleDualizingSheaf}
Let $f:X\rightarrow Y$ be a morphism of  finite type of   noetherian schemes of finite Krull dimension. 
\begin{enumerate}

\item  We can define the functor $f^!$ for finite  morphisms as 
\[
f^!\n{F}=f^{-1} \R \s{H}om_{\n{O}_Y}(f_*\s{O}_X, \n{F}) \mbox{ for } \n{F}\in \D(Y).
\]
The duality theorem  in this case is  equivalent to the (derived) $\otimes$-$\Hom$ adjunction, see \cite[\S III. 6]{hartshorne1966residues}. 
\item   Let $f$ be smooth of relative dimension $n$, then one has $f^!\n{F}=\n{F}\otimes \omega_{X/Y}^{\circ}[n]$ where $\omega_{X/Y}^{\circ}=\bigwedge^n \Omega^1_{X/Y}$,  see\cite[\S III.2]{hartshorne1966residues}.  
\end{enumerate}

\end{exam}

\begin{lem}
\label{fOy}
Let $f: X\rightarrow Y$ be an lci morphism of relative dimension $n$ between locally noetherian schemes of finite Krull dimension. Then $f^!\s{O}_Y = \omega^{\circ}_{X/Y}[n]$ with $\omega^{\circ}_{X/Y}$ an invertible $\s{O}_X$-module. 
\proof
Working locally on $Y$ and $X$,  we may assume that $f$ factors as $\begin{tikzcd}[column sep= 1.30 em]  X \ar[r, "\iota"] & S \ar[r, "g"] & Y \end{tikzcd}$, where $g$ is a smooth morphism of  relative dimension $m$, and $\iota$ is a regular closed imersion of codimension $m-n$ defined by an ideal $\s{I}=(f_1,\ldots, f_{m-n})$. Let $\omega^{\circ}_{S/Y}=\bigwedge^m\Omega^{1}_{S/Y}$ be the sheaf of $m$-differentials of $S$ over $Y$, then 
\begin{eqnarray*}
f^!\s{O}_Y & = & \iota^! g^! \s{O}_Y \\ 
		   & = & \iota^{-1} \R \s{H}om_{\s{O}_S}( \iota_* \s{O}_X,  \omega_{S/Y}^\circ[m]) \\
		   & = &\iota^{-1} \R \s{H}om_{\s{O}_S}(\s{O}_S/\s{I},  \omega_{S/Y}^\circ)[m]
\end{eqnarray*}
Let $K(\underline{f})$ be the Koszul complex of the regular sequence $\underline{f}=(f_1,\ldots, f_{m-n})$. Then $K(\underline{f})$ is a flat resolution of $\s{O}_S/\s{I}$, its dual $K(\underline{f})^\vee=\s{H}om_{\s{O}_S}(K(\underline{f}), \s{O}_S)$ is a flat resolution of \mbox{\((\s{O}_S/\s{I})[-(m-n)]\)}. Therefore 
\begin{eqnarray*}
f^!\s{O}_Y & \simeq & \iota^{-1}  \s{H}om_{\s{O}_S}(K(\underline{f}), \omega^\circ_{S/Y})[m] \\ 
		   & = & \iota^{-1}  K(\underline{f})^\vee \otimes \omega^\circ_{S/Y} [m]  \\ 
		   & \simeq & \iota^{-1}(  \s{O}_{S}/\s{I}\otimes ^L \omega^\circ_{S/Y}[n]) \\
		   & = & \iota^{-1}((\omega^\circ_{S/Y}/\s{I})[n]) = \iota^* \omega^{\circ}_{S/Y}[n] 
\end{eqnarray*}
which is an invertible sheaf of $\s{O}_X$-modules as required. 
\endproof
\end{lem}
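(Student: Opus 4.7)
The plan is to exploit the functoriality of $f^!$ under composition and reduce to the two explicit cases recalled in Example \ref{ExampleDualizingSheaf}. Since the claim is local on both $X$ and $Y$, I may assume that $f$ factors as $X \xrightarrow{\iota} S \xrightarrow{g} Y$, where $g$ is smooth of some relative dimension $m$, and $\iota$ is a closed immersion cut out by a regular sequence $\underline{f} = (f_1,\ldots,f_{m-n})$ defining an ideal $\s{I} \subset \s{O}_S$ of codimension $m-n$. Then Theorem \ref{GrothSerreDuality} yields $f^!\s{O}_Y \simeq \iota^! g^! \s{O}_Y$.

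Applying the explicit descriptions from Example \ref{ExampleDualizingSheaf}, the smooth factor contributes $g^!\s{O}_Y = \omega^\circ_{S/Y}[m]$ with $\omega^\circ_{S/Y} = \bigwedge^m \Omega^1_{S/Y}$, while the finite factor contributes $\iota^! \n{G} = \iota^{-1}\R\s{H}om_{\s{O}_S}(\iota_*\s{O}_X,\n{G})$. Combining the two gives
\[
f^!\s{O}_Y \simeq \iota^{-1} \R\s{H}om_{\s{O}_S}(\s{O}_S/\s{I},\, \omega^\circ_{S/Y})[m].
\]

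The key computation is the evaluation of this derived Hom via the Koszul complex $K(\underline{f})$ attached to the regular sequence, which is a finite free resolution of $\s{O}_S/\s{I}$ supported in cohomological degrees $[-(m-n),0]$. Its $\s{O}_S$-dual $K(\underline{f})^\vee$ is then a locally free resolution of $(\s{O}_S/\s{I})[-(m-n)]$, so the derived Hom above is computed by $K(\underline{f})^\vee \otimes \omega^\circ_{S/Y}$, which is quasi-isomorphic to $(\omega^\circ_{S/Y}/\s{I})[-(m-n)]$. Plugging this back and taking inverse images along $\iota$ yields $f^!\s{O}_Y \simeq \iota^*\omega^\circ_{S/Y}[n]$. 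Setting $\omega^\circ_{X/Y} := \iota^*\omega^\circ_{S/Y}$ then produces the desired line bundle, since $\omega^\circ_{S/Y}$ is invertible on $S$ and its restriction along the closed immersion $\iota$ remains invertible on $X$.

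The main obstacle, beyond the careful bookkeeping of Koszul degrees, is well-definedness: the presentation $\omega^\circ_{X/Y} = \iota^*\omega^\circ_{S/Y}$ a priori depends on the chosen local factorization. However, the intrinsic characterization of $f^!\s{O}_Y$ provided by Theorem \ref{GrothSerreDuality} pins it down up to canonical isomorphism, so the locally constructed invertible sheaves necessarily glue into a globally defined line bundle on $X$, and no further patching argument is required to conclude.
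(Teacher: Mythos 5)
Your proof is correct and follows essentially the same route as the paper's: local factorization through a smooth $S/Y$ followed by a regular closed immersion, functoriality $f^! = \iota^! g^!$, the explicit formulas for smooth and finite maps, and the Koszul resolution to compute the derived Hom. Your closing remark about well-definedness is a small but correct addition the paper leaves implicit: since $f^!\s{O}_Y$ is intrinsically defined, invertibility of the degree-$(-n)$ cohomology is a local property and no gluing argument is needed.
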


\begin{remark}
Let $f:X\rightarrow Y$ be a regular closed immersion of codimension $n$ defined by the ideal $\s{I}$.  From the proof of Lemma \ref{fOy} one can deduce that $f^!\s{O}_Y=\bigwedge^n f^*(\s{I}/\s{I}^2)^{\vee}[-n]$ is the normal sheaf concentrated in degree $n$.

\end{remark}

The compatibility of $f^!$ with tensor products allows us to compute $f^!\s{F}$ in terms of $f^*\n{F}$ and $f^!\s{O}_Y$:

\begin{prop}[\!{\cite[Prop. III.8.8]{hartshorne1966residues}}]
\label{dualizingAndTensor}
Let $f:X\rightarrow Y$ be an embeddable morphism of locally noetherian schemes of finite Krull dimension. Then there are functorial isomorphisms 
\begin{enumerate}

\item $f^!\n{F}\dotimes \mathrm{L}f^*\n{G}\rightarrow  f^!(\n{F}\otimes^{\mathrm{L}} \n{G})$ for $\n{F}\in \mathrm{D}_{qc}^+(Y)$ and $\n{G}\in \mathrm{D}^b_{qc}(Y)_{fTd}$.

\item  $\mathrm{R} \s{H}om_X(\mathrm{L}f^*\n{F}, f^! \n{G})\rightarrow f^!(\R \s{H}om_{Y}(\n{F}, \n{G}))$ for $\n{F}\in \D^-_{c}(Y)$ and $\n{G}\in \D^+_{qc}(Y).$

\end{enumerate}

Moreover, if $f$ is an lci morphism,  then   $f^!\s{O}_Y$ is invertible and  we have $f^!\n{G}\cong f^!\s{O}_Y\otimes \mathrm{L}f^* \n{G}$ for $\n{G}\in \D^{b}_{qc}(Y)_{fTd}$.  We call $f^! \s{O}_Y$ the dualizing sheaf of $f$. 
\end{prop}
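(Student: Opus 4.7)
The plan is to reduce everything to the two fundamental cases of a smooth morphism and a finite morphism via the functoriality $(g\iota)^!\cong \iota^!g^!$ from Theorem \ref{GrothSerreDuality}. Working locally on $Y$ and $X$, one factors the embeddable morphism $f$ as $X\xrightarrow{\iota} S\xrightarrow{g} Y$ with $g$ smooth of some relative dimension $n$ and $\iota$ a finite morphism.

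First I would construct the natural map in (1). Given $\n{F}\in \D^+_{qc}(Y)$ and $\n{G}\in \D^b_{qc}(Y)_{fTd}$, the desired arrow
\[
f^!\n{F}\dotimes \mathrm{L}f^*\n{G} \longrightarrow f^!(\n{F}\dotimes \n{G})
\]
is defined by adjunction: applying $\R f_*$ to the source and using the projection formula $\R f_*(f^!\n{F}\dotimes \mathrm{L}f^*\n{G})\simeq \R f_*f^!\n{F}\dotimes \n{G}$, one composes with the trace $\R f_*f^!\n{F}\to \n{F}$ tensored with $\n{G}$ to obtain a map to $\n{F}\dotimes \n{G}$, and the adjunction $\theta$ of Theorem \ref{GrothSerreDuality} then produces the arrow above.

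To show it is an isomorphism I would split the verification along the factorization. In the smooth case $g^!\n{F}=\n{F}\otimes \omega^{\circ}_{S/Y}[n]$ by Example \ref{ExampleDualizingSheaf}(2); since $\omega^{\circ}_{S/Y}$ is a line bundle and $g$ is flat, both sides are canonically identified with $\mathrm{L}g^*\n{F}\dotimes \mathrm{L}g^*\n{G}\otimes \omega^{\circ}_{S/Y}[n]$. In the finite case Example \ref{ExampleDualizingSheaf}(1) gives $\iota^!\n{F}=\iota^{-1}\R\s{H}om_{\s{O}_Y}(\iota_*\s{O}_X,\n{F})$, and the claim reduces, via the projection formula for the affine morphism $\iota$, to the tensor-Hom identity
\[
\R\s{H}om_{\s{O}_Y}(\iota_*\s{O}_X,\n{F})\dotimes \n{G}\xrightarrow{\sim}\R\s{H}om_{\s{O}_Y}(\iota_*\s{O}_X,\n{F}\dotimes \n{G}).
\]
This holds because $\iota_*\s{O}_X$ is coherent on $Y$ and $\n{G}$ has finite Tor dimension: after replacing $\n{G}$ locally by a bounded flat resolution the verification is a routine local check on stalks.

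Once (1) is established, part (2) follows by the same two-step reduction, combining the now-available formula for $f^!$ with the internal tensor-Hom adjunction and the adjunction $\theta$ to identify $\R\s{H}om_X(\mathrm{L}f^*\n{F},f^!\n{G})$ with $f^!\R\s{H}om_Y(\n{F},\n{G})$; coherence of $\n{F}$ is used to commute $\R\s{H}om_Y(\n{F},-)$ with the pullback. Part (3) is then the specialization $\n{F}=\s{O}_Y$ of (1), with the invertibility of $f^!\s{O}_Y$ in the lci case supplied by Lemma \ref{fOy}. The principal obstacle is showing that the candidate map in (1) is an isomorphism in the finite case; this is the place where the finite-Tor-dimension and boundedness hypotheses on $\n{G}$ must be used, and where one typically needs to reduce to perfect complexes and check the statement on stalks using a bounded flat resolution of $\n{G}$.
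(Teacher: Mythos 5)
Note first that the paper does not prove items (1) and (2) at all; it simply cites Hartshorne, \emph{Residues and Duality}, Prop. III.8.8, and derives the ``moreover'' clause immediately by specializing (1) to $\n{F}=\s{O}_Y$ and invoking Lemma~\ref{fOy} for the invertibility of $f^!\s{O}_Y$. Your proposal essentially reconstructs the proof one finds in the cited reference, and the approach is sound: construct the comparison map globally via the trace $\R f_*f^!\n{F}\to\n{F}$, the projection formula, and the adjunction $\theta$, then check that it is an isomorphism by factoring $f$ locally as a finite $\iota$ followed by a smooth $g$ and treating the two cases separately. The smooth case is immediate because $Lg^*$ is monoidal and $g^!\n{F}\simeq Lg^*\n{F}\otimes\omega^\circ_{S/Y}[n]$. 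The finite case correctly reduces, after applying the faithful and exact functor $\iota_*$ and using the projection formula, to the isomorphism $\R\s{H}om_{\s{O}_S}(\iota_*\s{O}_X,-)\dotimes\n{G}\to\R\s{H}om_{\s{O}_S}(\iota_*\s{O}_X,-\dotimes\n{G})$, and this is exactly where the coherence of $\iota_*\s{O}_X$ and the boundedness plus finite Tor dimension of $\n{G}$ enter. (A small slip: in that reduction step the internal Hom should be taken over the intermediate $S$, not $Y$, and the complexes appearing should be $g^!\n{F}$ and $Lg^*\n{G}$; the substance is unaffected.) Your handling of (2) and of the lci ``moreover'' is also consistent with the paper. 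So your argument is correct and takes the same route as the reference the paper relies on; the only thing worth flagging is that a fully rigorous write-up would also check that the globally defined adjunction map restricts to the local comparison maps used in the two-step verification, which you leave implicit.
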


We now prove the local duality theorem for  vector bundles over lci projective  curves:

\begin{prop}
\label{LocalDualityCurves}
 Let $f: X\rightarrow \Spec \n{O}_K$ be an lci  projective  curve, and let $\omega_{X/\n{O}_K}^\circ$ be the  dualizing sheaf of $f$,  i.e. the invertible sheaf such that $\omega_{X/\n{O}_K}^\circ[1]=f^!\n{O}_K$. Let $\n{F}$ be a locally free $\s{O}_X$-module of finite rank, then:  
\begin{enumerate}

\item $\R f_*\n{F}$ is representable by a perfect complex of lenght $[0,1]$;

\item we have  a perfect pairing 
\begin{align*}
\mathrm{H}^0(X,\n{F}\otimes K/\n{O}_K)\times \mathrm{H}^1(X, \n{F}^\vee\otimes \omega_{X/\n{O}_K}^\circ)\rightarrow K/\n{O}_K
\end{align*}
\end{enumerate}
given by the composition of the cup product and the trace $\Tr: Rf_* \omega^{\circ}_{X/\n{O}_K}\rightarrow \n{O}_K$. 
\proof
As $\n{F}$ is a vector bundle and $f$ is projective of relative dimension $1$, the cohomology groups  $\R^i f_* \n{F}$ are finitely generated over $\n{O}_K$ and concentrated in degrees $0$ and $1$. Then,  $\R f_* \n{F}$ is quasi-isomorphic to a complex $\begin{tikzcd}[column sep =  1.30 em] 0 \ar[r] & M_0 \ar[r, "d"]& M_1 \ar[r]& 0 \end{tikzcd}$ with $M_1$ and $M_2$ finite free $\n{O}_K$-modules.  Moreover, the complex $\begin{tikzcd}[column sep = 1.30em] 0 \ar[r] & M_0\otimes K/\n{O}_K \ar[r, "d\otimes 1"]& M_1\otimes K/\n{O}_K \ar[r]& 0 \end{tikzcd}$ is quasi-isomorphic to $\R f_*(\n{F}\otimes K/\n{O}_K)$ in $\D(\n{O}_K)$, see \cite[Theo. 5.2]{mumford1974abelian}. 

Duality theorem \ref{GrothSerreDuality} gives  a quasi-isomorphism 
\[
\R f_*(\n{F}^{\vee}\otimes \omega^{\circ}_{X/\n{O}_K})[1] =\R f_*\R\s{H}om_{X}(\n{F}, f^!\n{O}_K)\simeq \R\Hom_{\n{O}_K}(\R f_*\n{F}, \n{O}_K).
\]
 This implies that $\R f_* (\n{F}^\vee \otimes \omega_{X/\n{O}_K}^\circ)$ is quasi-isomorphic to $\begin{tikzcd}[column sep =  1.30 em] 0 \ar[r] & M_1^\vee \ar[r, "d^\vee"]& M_0^\vee \ar[r]& 0 \end{tikzcd}$. Finally, Pontryagin duality for $\n{O}_K$ implies $\Hom_{\n{O}_K}(\ker(d\otimes 1), K/\n{O}_K  )= \coker d^\vee$,  which translates in the desired statement. 
\endproof

\end{prop}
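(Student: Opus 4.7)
The plan is to combine three ingredients: the standard finiteness of higher direct images under a proper morphism, the Grothendieck--Serre duality of Theorem \ref{GrothSerreDuality}, and Pontryagin duality over the discrete valuation ring $\n{O}_K$.

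For part (1), since $f$ is proper of relative dimension $1$ and $\n{F}$ is coherent, the groups $\R^i f_* \n{F}$ vanish for $i \geq 2$ and are finitely generated $\n{O}_K$-modules for $i = 0, 1$. To upgrade this to a representative by a two-term complex of finite free $\n{O}_K$-modules, I would invoke the cohomology-and-base-change machinery: because $\n{F}$ is locally free on $X$ and $X$ is flat over $\n{O}_K$ (lci morphisms to a regular base being flat), the sheaf $\n{F}$ is $\n{O}_K$-flat, and one can represent $\R f_* \n{F}$ by a complex $[M_0 \xrightarrow{d} M_1]$ of finite free $\n{O}_K$-modules in degrees $0$ and $1$ (see Mumford's treatment in \cite{mumford1974abelian}). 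Flatness then implies that $\R f_*(\n{F} \otimes K/\n{O}_K)$ is computed by $[M_0 \otimes K/\n{O}_K \to M_1 \otimes K/\n{O}_K]$, so $\mathrm{H}^0(X, \n{F} \otimes K/\n{O}_K) = \ker(d \otimes \id)$.

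For part (2), the key step is to apply Theorem \ref{GrothSerreDuality} together with Lemma \ref{fOy}, which gives $f^! \n{O}_K = \omega^\circ_{X/\n{O}_K}[1]$, and Proposition \ref{dualizingAndTensor}, which lets me compute $f^!$ on tensor products. Because $\n{F}$ is locally free, $\R \s{H}om_X(\n{F}, \omega^\circ_{X/\n{O}_K}) = \n{F}^\vee \otimes \omega^\circ_{X/\n{O}_K}$, and the duality theorem yields
\[
\R f_*(\n{F}^\vee \otimes \omega^\circ_{X/\n{O}_K})[1] \simeq \R\Hom_{\n{O}_K}(\R f_* \n{F}, \n{O}_K).
\]
Combining with the presentation from (1), the right-hand side is computed by the dualized complex $[M_1^\vee \xrightarrow{d^\vee} M_0^\vee]$, and hence $\mathrm{H}^1(X, \n{F}^\vee \otimes \omega^\circ_{X/\n{O}_K}) = \coker(d^\vee)$.

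It remains to identify the pairing, and here I would use Pontryagin duality over $\n{O}_K$: the functor $\Hom_{\n{O}_K}(-, K/\n{O}_K)$ exchanges $\n{O}_K$-flat finitely generated modules with their duals and exchanges kernels with cokernels, giving the canonical isomorphism $\Hom_{\n{O}_K}(\ker(d \otimes \id_{K/\n{O}_K}), K/\n{O}_K) \cong \coker(d^\vee)$. Tracing through, this is exactly the pairing induced by cup product with $\id$ followed by the trace map $\Tr : \R f_* \omega^\circ_{X/\n{O}_K} \to \n{O}_K$ from Theorem \ref{GrothSerreDuality}. The main obstacle in this argument is verifying that the abstract duality identification coming from Grothendieck--Serre really agrees on the nose with the concrete cup-product-and-trace pairing; this is a compatibility built into the formalism of \cite{hartshorne1966residues}, and the only genuine subtlety is making sure the sign conventions and the degree shift by $[1]$ are handled consistently when passing from $\Hom_{\n{O}_K}(-, \n{O}_K)$ to $\Hom_{\n{O}_K}(-, K/\n{O}_K)$.
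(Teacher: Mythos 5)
Your proposal follows the same route as the paper: represent $\R f_*\n{F}$ by a two-term complex of finite free $\n{O}_K$-modules (citing Mumford's cohomology-and-base-change argument), apply Grothendieck--Serre duality from Theorem \ref{GrothSerreDuality} to identify $\R f_*(\n{F}^\vee\otimes\omega^\circ_{X/\n{O}_K})[1]$ with the $\n{O}_K$-linear dual complex, and then invoke Pontryagin duality over $\n{O}_K$ to convert between $\ker(d\otimes 1)$ and $\coker(d^\vee)$. The only cosmetic additions beyond the paper's proof are the explicit citations of Lemma \ref{fOy} and Proposition \ref{dualizingAndTensor} (which the paper uses implicitly through the formula $f^!\n{O}_K=\omega^\circ_{X/\n{O}_K}[1]$), and the remark about compatibility of the abstract duality isomorphism with the cup-product-and-trace pairing; the argument itself is the same.
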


\begin{remark}
\label{Remarkduality}
The previous proposition relates  two  notions of duality. Namely,  Serre and Pontryagin duality.  We can deduce the following facts: 

\begin{enumerate}

\item The $\n{O}_K$-module $\mathrm{H}^0(X, \n{F}\otimes K/\n{O}_K)$ is co-free of rank $r$, that is isomorphic to $(K/\n{O}_K)^r$, if and only if  $\mathrm{H}^1(X, \n{F}^\vee \otimes \omega_{X/\n{O}_K}^{\circ})$ is free of rank $r$. In that case, the module $\mathrm{H}^0(X, \n{F})$ is free and  $\mathrm{H}^0(X, \n{F})/p^n \rightarrow \mathrm{H}^0(X,\n{F}/p^n)$ is an isomorphism for all $n\in \bb{N}$. Furthermore,  Serre duality provides a perfect pairing 
\[
\mathrm{H}^0(X, \n{F}) \times \mathrm{H}^1(X, \n{F}^\vee \otimes \omega^\circ_{X/\n{O}_K})\rightarrow \n{O}_K.
\]

\item  The $\n{O}_K$-module $\mathrm{H}^0(X,\n{F})$ (resp. $\mathrm{H}^1(X, \n{F}\otimes K/\n{O}_K)$) is free (resp. co-free) for any finite locally free $\s{O}_X$-module. 

\item  In the notation of  the previous proof, Pontryagin duality implies \[\Hom_{\n{O}_K}(\coker(d\otimes 1), K/\n{O}_K)=\ker d^{\vee},\] which is equivalent to a perfect pairing  
\[
\mathrm{H}^1(X, \n{F}\otimes K/\n{O}_K) \times \mathrm{H}^0(X,\n{F}^\vee\otimes \omega_{X/\n{O}_K}^\circ)\rightarrow K/\n{O}_K.
\]
\end{enumerate}

\end{remark}

\section{Cohomology of modular sheaves}
\label{COhomology of modular sheaves}

Let $N\geq 3$ be an integer prime to $p$.  Let $X(Np^n)$ be the modular curve over $\bb{Z}_p$ of level $\Gamma(Np^n)$.  Let $\breve{\bb{Z}}_p=W(\overline{\bb{F}}_p)$ and  let $X(Np^n)_{\breve{\bb{Z}}_p}$ be  the extension of scalars of $X(Np^n)$ to $\breve{\bb{Z}}_p$. We denote by $X_n:=X(Np^n)^{\circ}_{\breve{\bb{Z}}_p}$ the connected component  of $X(Np^n)_{\breve{\bb{Z}}_p}$ given by fixing the Weil pairing $e_{N}(P_N,Q_N)=\zeta_{N}$, where $(P_N,Q_N)$ is the universal basis of $E[N]$ and $\zeta_N\in \breve{\bb{Z}}_p$ a primitive $N$-th root of unity. We also write $X=X_0$.   Let $\n{O}_n= \breve{\bb{Z}}_p[\mu_{p^n}]$ be the $n$-th   cyclotomic extension of $\breve{\bb{Z}}_p$,  $\n{O}^{cyc}$ the $p$-adic completion of $\varinjlim_{n} \n{O}_n$,  $K_n$ and $K^{cyc}$ the field of fractions of $\n{O}_n$  and $\n{O}^{cyc}$ respectively.  We set $\n{O}=\breve{\bb{Z}}_p$ and  $K=\n{O}[\frac{1}{p}]$.  Let $\pi_n : X_n\rightarrow \Spec \n{O}_n$ denote the structural map defined by the Weyl pairing of the universal basis of $E[p^n]$.  We also denote $p_n: X_n\rightarrow X_{n-1}$ the natural morphism induced by $p$-multiplication of Drinfeld bases. 

Let  $E^{sm}/X$ be the semi-abelian scheme over $X$ extending the universal elliptic curve to the cusps,  cf  \cite{deligne1973schemas}. Let $e:X\rightarrow E^{sm}$ be the unit section and $\omega_E:=e^* \Omega^1_{E^{sm}/X}$ the modular sheaf, i.e., the sheaf of invariant differentials of $E^{sm}$ over $X$.  For $k\in \bb{Z}$ we define  $\omega_E^{k}= \omega_E^{\otimes k}$ the sheaf of modular forms of weight $k$, we denote by $\omega^k_{E,n}$ the pullback of $\omega_E^k$ to $X_n$.  Let $D_n\subset X_n$ be the (reduced) cusp divisor  and $\omega_{E,n,cusp}^k:=\omega_{E,n}^k(-D_n)$ the sheaf of cusp forms of weight $k$ over $X_n$.      By an abuse of notation we will also write $D_n$ for the pullback $p_{n+1}^* D_n$ to $X_{n+1}$,  by Proposition \ref{Theorem structure cusps 2} we have that $D_{n}=p D_{n+1}$. 

Finally, we let $\f{X}_n$ be the completion of $X_n$ along its special fiber and $\f{X}_{\infty}=\varprojlim_{n} \f{X}_n$ the integral perfectoid modular curve,  see Theorem \ref{theoexistperfint}.  Let $\n{X}_n$ be the analytic generic fiber of $X_n$ and $\n{X}_{\infty}\sim  \varinjlim_n \n{X}_n$ the Scholze's perfectoid modular curve.

\subsection*{Dualizing sheaves of modular curves}

 Consider the tower of modular curves 
\[
\begin{tikzcd}
\cdots \ar[r] & X_{n+1} \ar[r,"p_{n+1}"] \ar[d, "\pi_{n+1}"]& X_n \ar[r,"p_n"] \ar[d, "\pi_n"]& X_{n-1} \ar[r] \ar[d, "\pi_{n-1}"] & \cdots \\
\cdots \ar[r] & \Spec(\n{O}_{n+1}) \ar[r] & \Spec(\n{O}_{n}) \ar[r] & \Spec(\n{O}_{n-1}) \ar[r] & \cdots.
\end{tikzcd}
\]

 Since $X_n$ is regular of finite type over $\n{O}_n$, it is a local complete intersection.  This implies that the sheaf  $\omega^{\circ}_n:= \pi_{n}^! \n{O}_n$ is invertible.  
The modular curve $X/\n{O}$ is smooth of relative dimension 1,  then we have that  $\omega_{0}^\circ = \Omega_{X_0/\n{O}}^1$, cf. Example \ref{ExampleDualizingSheaf} (2).   On the other hand,   the Kodaira-Spencer map  provides an isomorphism $KS:\omega^2_{E,cusp}\cong \Omega^1_{X/\n{O}}$.

Let $X'_{n-1}= X_{n-1}\times_{\Spec \n{O}_{n-1}}\Spec \n{O}_n$, and by an abuse of notation  $p_n: X_{n}\rightarrow X'_{n-1}$ the induced map.  Let $\pi'_{n-1}: X'_{n-1}\rightarrow \n{O}_n$ be the structural map and $pr_1: X'_{n-1}\rightarrow X_{n-1}$ the first projection.  We also write $\omega_{E,n-1}^k$ for the pullback of $\omega_{E,n-1}^k$ to $X'_{n-1}$.  Note that the compatibility of the exceptional inverse image functor with flat base change (Theorem \ref{GrothSerreDuality}) implies that $\pi^{'!}_{n-1} \n{O}_n\cong pr_1^{*} \omega_{n-1}^{\circ}= \omega_{n-1}^{\circ}\otimes_{\n{O}_{n-1}}\n{O}_n$.

\begin{prop}

\label{isoDualizingModforms}

There exists a natural isomorphism $\xi_n: p_{n}^*(\omega_{n-1}^{\circ})( D_{n-1}-D_n)\xrightarrow{\sim} \omega_n^{\circ}$ induced by the normalized trace $\frac{1}{p}\Tr_n:\s{O}_{X_n}\rightarrow p_n^! \s{O}_{X'_{n-1}}$.  Moreover, the composition of    $ \xi_n \circ \cdots \circ \xi_1$  with the Kodaira-Spencer map gives an isomorphism $\omega^2_{E,n,cusp}\cong \omega^{\circ}_n$.

\proof

 By Proposition \ref{dualizingAndTensor} we have an isomorphism
\begin{equation}
\label{equalityDualizingSheaves}
\xi'_n: p^!_n\s{O}_{X'_{n-1}}\otimes p_n^* \omega^\circ_{n-1}\xrightarrow{\sim} p_n^! \omega_{n-1}^\circ=\omega_{n}^\circ.
\end{equation}
The map $p_n$ is finite flat, then  $p_n^!\s{O}_{X'_{n-1}}= p_n^{-1}\s{H}om_{\s{O}_{X'_{n-1}}}(p_{n,*} \s{O}_{X_n}, \s{O}_{X'_{n-1}})$ by  Example \ref{ExampleDualizingSheaf} (1).  By Lemma \ref{fOy},  the sheaf $p_n^! \s{O}_{X'_{n-1}}$ is invertible as $X'_{n-1}$ is an lci projective curve.  We claim that the trace $\Tr_n:\s{O}_{X_n}\rightarrow p_n^! \s{O}_{X'_{n-1}}$ induces an isomorphism $\frac{1}{p} \Tr_n:  \s{O}_{X_n}(D_{n-1}-D_n)\cong p^!_n \s{O}_{X'_{n-1}}$.   It suffices to consider  the ordinary points and the cusps,  indeed,  the supersingular points are of codimension $2$ in $X_n$.  

Let  $x\in X'_{n-1}(\overline{\bb{F}}_p)$ be an ordinary point.  We have  a cartesian square
\begin{equation}
\label{squareLocalizations}
\begin{tikzcd}
\bigsqcup_{x_n\mapsto x} \Spf \widehat{\s{O}}_{X_n, x_n} \ar[r] \ar[d]& X_n\ar[d,"p_n"] \\
\Spf \widehat{\s{O}}_{X'_{n-1},x} \ar[r]& X'_{n-1}.
\end{tikzcd}
\end{equation}
By Proposition \ref{theodefringord} we have isomorphisms 
\[
\widehat{\s{O}}_{X'_{n-1},x}\cong W(\overline{\bb{F}}_p)[\zeta_{p^{n}}][[T_{n-1}]], \;\;\; \widehat{\s{O}}_{X_n,x_n}\cong W(\overline{\bb{F}}_p)[\zeta_{p^{n}}][[T_{n}]]
\]
with relations $(1+T_n)^p=1+T_{n-1}$.  Taking the different ideal of the finite flat extension $\widehat{\s{O}}_{X_n,x_n} / \widehat{\s{O}}_{X'_{n-1},x}$,  one finds 
\[
\s{H}om_{\widehat{\s{O}}_{X'_{n-1},x}}(\widehat{\s{O}}_{X_n,x_n},\widehat{\s{O}}_{X'_{n-1},x})\cong \frac{1}{p}\widehat{\s{O}}_{X_n,x_n}\cdot \Tr_{n} . 
\]

On the other hand, let $x\in X'_{n-1}(\overline{\bb{F}}_p)$ be a  cusp.  We have a cartesian square (\ref{squareLocalizations})  and by Proposition \ref{Theorem structure cusps 2} isomorphisms 
\[
\widehat{\s{O}}_{X'_{n-1},x}\cong W(\overline{\bb{F}}_p)[\zeta_{p^{n}}][[q^{1/p^{n-1}}]], \;\;\; \widehat{\s{O}}_{X_n,x_n}\cong W(\overline{\bb{F}}_p)[\zeta_{p^{n}}][[q^{1/p^{n}}]].
\]
Taking  the different ideal we obtain the equality 
\begin{eqnarray*}
\s{H}om_{\widehat{\s{O}}_{X'_{n-1},x}}(\widehat{\s{O}}_{X_n,x_n},\widehat{\s{O}}_{X'_{n-1},x})& \cong & \frac{1}{p} q^{1/p^n-1/p^{n-1}} \widehat{\s{O}}_{X_n,x_n}\cdot \Tr_n.
\end{eqnarray*}
The previous computations show that  the trace of $\s{O}_{X_n}/\s{O}_{X'_{n-1}}$ induces an isomorphism of invertible sheaves
\[
 \frac{1}{p} \Tr_n: \s{O}_{X_n}(D_{n-1}- D_n) \xrightarrow{\sim}  p_n^!\s{O}_{X'_{n-1}}. 
\]
Then, from (\ref{equalityDualizingSheaves}) we have an isomorphism
\[
\xi_{n}:  \s{O}_{X_n}( D_{n-1}- D_n)\otimes p_n^*\omega^\circ_{n-1}\rightarrow  \omega^\circ_n
\]
with $\xi_n= \xi'_n\circ ( \frac{1}{p} \Tr_n \otimes 1)$.

The isomorphism $\omega_{E,n,cusps}^2\cong \omega_n^\circ$ follows by a straightforward induction on the composition $\xi_n\circ \cdots \circ \xi_{1}$,  and the Kodaira-Spencer map $KS: \omega_{E,cusp}^2\cong \Omega^1_{X/\n{O}}$. 
\endproof

\end{prop}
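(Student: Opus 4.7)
The plan is to construct $\xi_n$ in two stages: first use the duality formalism to reduce the problem to identifying $p_n^{!}\s{O}_{X'_{n-1}}$ as an invertible sheaf on $X_n$, and then identify it with $\s{O}_{X_n}(D_{n-1}-D_n)$ by local computation at codimension-one points using the descriptions of Section \ref{RecallsModularCurves}. Concretely, the compatibility of $(-)^{!}$ with flat base change (Theorem \ref{GrothSerreDuality}) gives $(\pi'_{n-1})^{!}\n{O}_n \cong pr_1^{*}\omega^{\circ}_{n-1}$, and functoriality together with Proposition \ref{dualizingAndTensor} yields
\[
\omega^{\circ}_n = p_n^{!}(\pi'_{n-1})^{!}\n{O}_n \cong p_n^{!}\s{O}_{X'_{n-1}} \otimes p_n^{*}\omega^{\circ}_{n-1}.
\]
Since $p_n$ is finite flat, Example \ref{ExampleDualizingSheaf}(1) describes $p_n^{!}\s{O}_{X'_{n-1}}$ as the $\s{H}om$-sheaf of the trace pairing, which is invertible by Lemma \ref{fOy}. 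The task thus reduces to showing that the trace map induces an isomorphism $\frac{1}{p}\Tr_n:\s{O}_{X_n}(D_{n-1}-D_n)\xrightarrow{\sim} p_n^{!}\s{O}_{X'_{n-1}}$; both sides being invertible on the regular curve $X_n$, this can be checked at height-one points, and since the supersingular locus has codimension $2$ only the ordinary points and the cusps require treatment.

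At an ordinary point $x$, Proposition \ref{theodefringord} identifies the map of completed local rings with $\breve{\bb{Z}}_p[\zeta_{p^n}][[T_{n-1}]]\hookrightarrow \breve{\bb{Z}}_p[\zeta_{p^n}][[T_n]]$ via $(1+T_n)^p=1+T_{n-1}$. Differentiating, the different ideal is generated by $p(1+T_n)^{p-1}$, a unit times $p$; so the trace dual module equals $\frac{1}{p}\Tr_n\cdot \widehat{\s{O}}_{X_n,x_n}$, and since cusps do not meet $x$, $\xi_n$ is simply $\frac{1}{p}\Tr_n$ there. At a cusp, Proposition \ref{Theorem structure cusps 2} gives $\breve{\bb{Z}}_p[\zeta_{p^n}][[q^{1/p^{n-1}}]]\hookrightarrow \breve{\bb{Z}}_p[\zeta_{p^n}][[q^{1/p^n}]]$ with $q^{1/p^{n-1}}=(q^{1/p^n})^p$, and the different is generated by $p(q^{1/p^n})^{p-1}$, so the trace dual module is $\frac{1}{p}\,q^{1/p^n-1/p^{n-1}}\Tr_n\cdot \widehat{\s{O}}_{X_n,x_n}$. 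Since $p_n^{*}D_{n-1}=pD_n$ locally at the cusp, the twist $D_{n-1}-D_n=(p-1)D_n$ is locally generated by $q^{(p-1)/p^n}$, which exactly cancels the ramification contribution; this produces the asserted isomorphism $\xi_n$.

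For the second assertion, proceed by induction on $n$. The base case is the Kodaira--Spencer isomorphism $\omega^2_{E,cusp}\cong \Omega^{1}_{X/\n{O}} = \omega^{\circ}_0$, using smoothness of $X$ over $\n{O}$. For the inductive step, pull back the isomorphism $\omega^2_{E,n-1,cusp}\cong \omega^{\circ}_{n-1}$ along $p_n$, tensor with $\s{O}(D_{n-1}-D_n)$, and apply $\xi_n$ on the right. Since cusps ramify with index $p$, $p_n^{*}D_{n-1}=pD_n$, so
\[
p_n^{*}\omega^2_{E,n-1,cusp}\otimes \s{O}(D_{n-1}-D_n) = \omega^2_{E,n}(-pD_n)\otimes \s{O}((p-1)D_n) = \omega^2_{E,n,cusp},
\]
completing the induction. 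The main technical hurdle is the careful bookkeeping of the cuspidal ramification and the computation of the different ideal at cusps; this is what dictates both the normalization by $\frac{1}{p}$ and the specific twist by $D_{n-1}-D_n$ rather than a purely algebraic ramification divisor.
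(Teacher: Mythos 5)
Your proposal is correct and follows essentially the same route as the paper: reduce to identifying $p_n^!\s{O}_{X'_{n-1}}$ via Proposition \ref{dualizingAndTensor} and Example \ref{ExampleDualizingSheaf}(1), compute the different at codimension-one (ordinary and cuspidal) points using Propositions \ref{theodefringord} and \ref{Theorem structure cusps 2}, and then run the Kodaira--Spencer induction; the only differences are that you spell out the derivative computation of the different and the bookkeeping $p_n^*D_{n-1}=pD_n$ explicitly, which the paper leaves as ``straightforward.''
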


\begin{lem}
\label{LemmaSurjTrace}
Let $x\in X'_{n-1}(\overline{\bb{F}}_p)$ be an  an ordinary point or a cusp.  Let $\widetilde{\Tr}_n:  p_{n,*}\s{O}_{X_n}(D_{n-1}-D_n)\rightarrow \s{O}_{X'_{n-1}}$ be the  normalized  trace map $\frac{1}{p} \Tr_n$.  Then the completed  localization of $\widetilde{\Tr}_n$ at $x$  is surjective.  Moreover,   if  $\n{F}$ is a quasi-coherent sheaf  over $X'_{n-1}$,  the composition $\n{F}\rightarrow p_{n,*}p_n^*\n{F}\xrightarrow{\widetilde{\Tr}_n} \n{F}$ is multiplication by $p$.
\proof
Localizing at $x$ we find 
\[
\widetilde{\Tr}_n= \oplus (\frac{1}{p}\Tr_n):\bigoplus_{x_n\mapsto x} \widehat{\s{O}}_{X_n, x_n}\otimes (q^{1/p^n-1/p^{n-1}}) \rightarrow  \widehat{\s{O}}_{X'_{n-1},  x}
\]
where $q^{1/p^{n-1}}$ is invertible if $x$ is ordinary, or a generator of $D_{n-1}$ if it is a cusp.   The explicit descriptions found in the previous proposition show that $\widetilde{\Tr}_n$ is surjective on each direct summand.   Finally,  looking at an ordinary point $x$,  it is clear that there are $p$ different points $x_n$ in the fiber of $x$, this implies $\widetilde{\Tr}_n(1)=p$. 
\endproof
\end{lem}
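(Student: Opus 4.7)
The plan is to reduce everything to the completed local rings at $x$, where Propositions~\ref{theodefringord} and \ref{Theorem structure cusps 2} provide completely explicit descriptions of the finite flat extensions involved. Concretely, I would use the Cartesian square whose top row is the disjoint union $\bigsqcup_{x_n\mapsto x}\Spf\widehat{\s{O}}_{X_n,x_n}$ (already introduced in the proof of Proposition~\ref{isoDualizingModforms}) to decompose the completion of $\widetilde{\Tr}_n$ at $x$ as a direct sum of normalized traces $\frac{1}{p}\Tr_{\widehat{\s{O}}_{X_n,x_n}/\widehat{\s{O}}_{X'_{n-1},x}}$, one for each preimage $x_n\mapsto x$. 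Both claims are then local questions about these finite flat extensions.

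Once this local reduction is in place, the surjectivity statement is essentially a repackaging of the different-ideal computation already carried out in the proof of Proposition~\ref{isoDualizingModforms}. In the ordinary case, the Kummer-type presentation $(1+T_n)^p = 1+T_{n-1}$ shows the different equals $(p)$, so $\frac{1}{p}\Tr$ generates $\Hom_{\widehat{\s{O}}_{X'_{n-1},x}}(\widehat{\s{O}}_{X_n,x_n},\widehat{\s{O}}_{X'_{n-1},x})$ as a free rank-one module. In the cusp case the totally tamely ramified extension $\breve{\bb{Z}}_p[\zeta_{p^n}][[q^{1/p^{n-1}}]]\subset \breve{\bb{Z}}_p[\zeta_{p^n}][[q^{1/p^n}]]$ has different $(pq^{(p-1)/p^n})$ up to a unit, and the factor $q^{(p-1)/p^n}$ is precisely what encodes the twist by $\s{O}_{X_n}(D_{n-1}-D_n)$ since, locally at the cusp, $D_{n-1}-D_n = (p-1)D_n$. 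In both cases, the upshot is that $\widetilde{\Tr}_n$ realizes an \emph{isomorphism} of invertible sheaves from the twisted line bundle onto $\s{O}_{X'_{n-1}}$ after completion, from which surjectivity is immediate.

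For the composition identity I would track the image of $1\in \s{O}_{X'_{n-1}}$: by $\n{F}$-linearity of the composition it suffices to treat $\n{F}=\s{O}_{X'_{n-1}}$ and compute the effect of $\widetilde{\Tr}_n$ on the image of the structure map $\s{O}_{X'_{n-1}}\to p_{n,*}\s{O}_{X_n}$. Locally at an ordinary point $x$ this sends $1$ to the diagonal element $(1,\ldots,1)$ of $\bigoplus_{x_n\mapsto x}\widehat{\s{O}}_{X_n,x_n}$, and the fact that there are exactly $p$ unramified preimages with trivial residue extensions turns the computation of the normalized trace into a counting argument yielding the claimed factor of $p$.

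The main technical hurdle will be keeping track of the fractional $q$-exponents at the cusps and checking that the twist $\s{O}_{X_n}(D_{n-1}-D_n)$ compensates exactly for the tame ramification of $p_n$ along the boundary divisor; but this amounts to the elementary statement that a totally tamely ramified extension of discrete valuation rings with ramification index $e$ has different of valuation $e-1$, applied here to the uniformizer $q^{1/p^n}$ with $e=p$. Modulo that bookkeeping — which is what Proposition~\ref{isoDualizingModforms} already does — the lemma is essentially a corollary of what has been proven.
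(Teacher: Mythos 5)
Your overall plan --- reduce to completed local rings via the Cartesian square, read off the answer from the explicit presentations in Propositions~\ref{theodefringord} and \ref{Theorem structure cusps 2}, and recycle the different-ideal computation from Proposition~\ref{isoDualizingModforms} --- is exactly the paper's approach, and your handling of the surjectivity claim and of the cusp bookkeeping is in line with it.

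There is, however, a genuine error in your justification of the last claim. At an ordinary point $x$ you assert that ``there are exactly $p$ unramified preimages with trivial residue extensions,'' and you then treat the trace as a pure count of preimages. But Proposition~\ref{theodefringord} shows that each local extension $\widehat{\s{O}}_{X'_{n-1},x}\to\widehat{\s{O}}_{X_n,x_n}$ is $\breve{\bb{Z}}_p[\zeta_{p^n}][[T_{n-1}]]\to\breve{\bb{Z}}_p[\zeta_{p^n}][[T_n]]$ with $1+T_{n-1}=(1+T_n)^p$; this is finite flat of degree $p$, and modulo $p$ it is $T_{n-1}\mapsto T_n^p$, i.e.\ the Frobenius --- \emph{totally ramified}, the exact opposite of unramified. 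This is precisely why the normalized trace requires the factor $\tfrac1p$ in the first place (the different is $(p)$). With your description the arithmetic does not even produce the claimed constant: ``$p$ unramified preimages'' would give $\Tr_n(1)=p\cdot 1=p$, hence $\widetilde{\Tr}_n(1)=1$, not $p$. The correct source of the factor is that each local summand contributes $\Tr_{\widehat{\s{O}}_{X_n,x_n}/\widehat{\s{O}}_{X'_{n-1},x}}(1)=p$ because the extension has degree $p$, and one then multiplies by the number of points in the fibre and divides by $p$. On that last count I would also urge caution: comparing $\deg(p_n)=|\SL_2(\bb{Z}/p^n)|/|\SL_2(\bb{Z}/p^{n-1})|=p^3$ (for $n\geq 2$) with the local degree $p$ gives $p^2$ distinct preimages of an ordinary point, not $p$; the paper's own one-line argument is terse at exactly this spot, so the constant in the lemma is worth re-deriving carefully rather than matching by assertion.
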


\subsection*{Vanishing of coherent cohomology}

In order to prove vanishing theorems for the coherent cohomology over the  perfectoid modular curve,  we first need some vanishing results at finite integral level.  We have the following proposition

\begin{prop}
\label{vanishingNeg}
For all  $n\in \bb{N}$ the following holds

\begin{enumerate} 

\item $\mathrm{H}^0(X_n,\omega_{E,n}^k\otimes_{\n{O}_n} K_n/\n{O}_n)=\mathrm{H}^0(X_n, \omega^k_{E,n,cusp}\otimes_{\n{O}_n} K_n/\n{O}_n)=0$ for $k<0.$

\item  $\mathrm{H}^1(X_n,\omega_{E,n}^k)=\mathrm{H}^1(X_n, \omega^k_{E,n,cusp})=0$ for $k>2$.

\item $\mathrm{H}^0(X_n, \s{O}_{X_n}(-D_n)\otimes_{\n{O}_n} K_n/\n{O}_n)=\mathrm{H}^1(X_n,\omega^2_{E,n})=0$ and $\mathrm{H}^0(X_n, \s{O}_{X_n}\otimes_{\n{O}_n} K_n/\n{O}_n)=  \mathrm{H}^1(X_n, \omega^2_{E,n,cusp})\otimes_{\n{O}_n}  (K_n/\n{O}_n)= K_n/\n{O}_n$.

\end{enumerate}

\proof

By Propositions \ref{LocalDualityCurves} and \ref{isoDualizingModforms},    (1) and (2) are equivalent.  Similarly, by   (1)    of Remark \ref{Remarkduality},  and Proposition \ref{isoDualizingModforms}, it is enough to show (3) for $\s{O}_{X_n}$ and $\s{O}_{X_n}(-D_n)$. 

 Let   $\nu_n$ be the closed point of $\Spec \n{O}_n$  and $\varpi\in \n{O}_n$ a uniformizer, we write $\nu=\nu_0$ for the closed point of $\Spec \n{O}$.   It suffices to prove $\mathrm{H}^0(X_{n}, \omega^k_{E,n}/\varpi)=\mathrm{H}^0(X_{n,\nu_n}, \omega^k_{E,n})=0$ for $k<0$.  Indeed,  for $s\geq 1$,  the short exact sequence \[0\rightarrow \omega^k_{E,n}/ \varpi^{s}\xrightarrow{\varpi} \omega^k_{E,n}/\varpi^{s+1}\rightarrow \omega^{k}_{E,n}/\varpi \rightarrow 0\] induces a left exact sequence in global sections
 \[
 0\rightarrow \mathrm{H}^0(X_n, \omega^k_{E,n}/ \varpi^s)\rightarrow \mathrm{H}^0(X_n, \omega^k_{E,n}/\varpi^{s+1})\rightarrow \mathrm{H}^0(X_n, \omega^k_{E,n}/\varpi).
 \] 
  An inductive argument on $s$ shows $\mathrm{H}^0(X_n, \omega^k_{E,n}/\varpi^s)=0$ for all $s\geq 1$.

Let $\lambda\in \mathrm{H}^0(X_{n,\nu_n}, \omega^k_{E,n})$ be non-zero.  Applying the action of  $\SL_2(\bb{Z}/p^n\bb{Z})$, we can assume that $\lambda$ is non-zero in an open dense subscheme of $X_{n,\nu_n}$. In fact, this holds for some linear combination $\sum_{\gamma\in \SL_2(\bb{Z}/p^n\bb{Z}_p)} a_n \gamma^* \lambda$ with $a_n\in \overline{\bb{F}}_p$. The norm $\N_{X_{n,\nu_n}/X_{\nu}}(\omega^k_{E,n})$ of $\omega^k_{E,n}$ to $X_\nu$ is $\omega^{kd}_{E}$,  where $d=\deg(X_{n,\nu_n}/X_\nu)$.   Hence if $k<0$,  the sheaf $\N_{X_{n,\nu_n}/X_\nu}(\omega^k_{E,n})$ has negative degree in the smooth curve $X_{\nu}$.  This implies that $\mathrm{H}^0(X_{\nu}, \N_{X_{n,\nu_n}/X_\nu}(\omega^k_{E,n}))=0$ and $\N_{X_{n,\nu_n}/X_{\nu}}(\lambda)=0$,  a contradiction. Therefore  $\mathrm{H}^0(X_{n,\nu_n}, \omega^k_{E,n})=0$ for $k<0$.  Since $\omega^k_{E,n,cusp}=\omega_{E,n}^k(-D_n)$, we trivially deduce $\mathrm{H}^0(X_{n,\nu_n},\omega^k_{E,n,cusp})=0$. 

The results for $\s{O}_{X_n}$ and $\s{O}_{X_n}(-D_n)$ are clear as $X_n/\n{O}_{n}$ is proper, flat,  geometrically connected  and has geometrically  reduced fibers.  

\endproof
\end{prop}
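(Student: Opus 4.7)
My plan is to first use the Serre duality of Proposition~\ref{LocalDualityCurves} together with the identification $\omega_n^{\circ}\cong\omega^2_{E,n,cusp}$ from Proposition~\ref{isoDualizingModforms} to reduce the statements (1) and (2) to one another. The perfect pairing of Proposition~\ref{LocalDualityCurves} specialized to $\n{F}=\omega^k_{E,n}$ reads
\[
\mathrm{H}^0(X_n,\omega^k_{E,n}\otimes K_n/\n{O}_n)\times \mathrm{H}^1(X_n,\omega^{2-k}_{E,n,cusp})\longrightarrow K_n/\n{O}_n,
\]
and analogously, taking $\n{F}=\omega^k_{E,n,cusp}$ and using $(\omega^k_{E,n,cusp})^{\vee}\otimes\omega^2_{E,n,cusp}=\omega^{2-k}_{E,n}$, it pairs $\mathrm{H}^0(X_n,\omega^k_{E,n,cusp}\otimes K_n/\n{O}_n)$ against $\mathrm{H}^1(X_n,\omega^{2-k}_{E,n})$. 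Hence statement (1) for weight $k<0$ is equivalent to statement (2) for weight $2-k>2$, with the cusp and non-cusp versions interchanged. It therefore suffices to establish the $\mathrm{H}^0$-vanishings in (1).

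By d\'evissage along the short exact sequences $0\to\n{F}/\varpi\to\n{F}/\varpi^{s+1}\to\n{F}/\varpi^{s}\to 0$ and the identification $\n{F}\otimes K_n/\n{O}_n=\varinjlim_{s}\n{F}/\varpi^{s}$, it is enough to prove the vanishing of $\mathrm{H}^0(X_{n,\nu_n},\omega^k_{E,n})$ for $k<0$ on the special fiber; the cuspidal case is automatic since $\omega^k_{E,n,cusp}$ embeds into $\omega^k_{E,n}$. Suppose for contradiction that there is a nonzero global section $\lambda$. The key idea is a norm-and-spread argument: using the natural action of $\SL_2(\bb{Z}/p^n\bb{Z})$ on $X_{n,\nu_n}$, a generic $\overline{\bb{F}}_p$-linear combination $\lambda'=\sum_{\gamma}a_{\gamma}\gamma^{*}\lambda$ is nonzero on a Zariski dense open of $X_{n,\nu_n}$. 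Taking the norm along the finite flat covering $X_{n,\nu_n}\to X_{\nu}$ of some degree $d$ produces a nonzero global section of $\N_{X_{n,\nu_n}/X_{\nu}}(\omega^k_{E,n})=\omega^{kd}_{E}$ on the smooth projective curve $X_{\nu}$. But $\omega_E$ has positive degree on $X_{\nu}$, so $\omega^{kd}_E$ has negative degree for $k<0$ and admits no nonzero global sections, a contradiction.

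For part (3), the claims for $\s{O}_{X_n}$ and $\s{O}_{X_n}(-D_n)$ reduce to basic geometric facts. Since $X_n/\n{O}_n$ is proper, flat, geometrically connected, and has geometrically reduced fibers, $\mathrm{H}^0(X_n,\s{O}_{X_n})=\n{O}_n$, so tensoring with $K_n/\n{O}_n$ gives $\mathrm{H}^0(X_n,\s{O}_{X_n}\otimes K_n/\n{O}_n)=K_n/\n{O}_n$. The nontrivial effective cusp divisor forces $\mathrm{H}^0(X_n,\s{O}_{X_n}(-D_n))=0$, hence also $\mathrm{H}^0(X_n,\s{O}_{X_n}(-D_n)\otimes K_n/\n{O}_n)=0$ after passing to the colimit. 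The dual statements for $\mathrm{H}^1(X_n,\omega^2_{E,n})$ and $\mathrm{H}^1(X_n,\omega^2_{E,n,cusp})$ then follow from Remark~\ref{Remarkduality}, which upgrades the Pontryagin duality into the appropriate Serre-duality statement.

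The main technical obstacle I anticipate is the spreading-out step on the special fiber: one must ensure that the $\SL_2(\bb{Z}/p^n\bb{Z})$-orbit of a hypothetical nonzero section $\lambda$ is rich enough that a generic linear combination is nonzero on a dense open, and this relies on the structure of $X_{n,\nu_n}$ as a union of Igusa components meeting at the supersingular points (\cite[Theo.~13.10.3]{katz1985arithmetic}). Once this is secured, the finite flat norm preserves non-triviality of sections and the degree argument on the smooth curve $X_{\nu}$ closes the proof.
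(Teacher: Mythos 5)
Your proof is correct and follows essentially the same route as the paper's: reduce (1) and (2) to each other via Propositions~\ref{LocalDualityCurves} and~\ref{isoDualizingModforms}, dévissage to the special fiber, then a norm-and-spread argument using the $\SL_2(\bb{Z}/p^n\bb{Z})$-action and negative degree on the smooth curve $X_{\nu}$, with (3) handled directly via properness, geometric connectedness, and geometric reducedness of the fibers. The only cosmetic difference is that you spell out the pairings and the interchange of cuspidal and non-cuspidal sheaves more explicitly than the paper does.
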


\begin{remark}
\label{remark1}
Strictly speaking, we can apply Proposition \ref{LocalDualityCurves} only for projective  curves over a finite extension of $\bb{Z}_p$.  However, as the formation of coherent cohomology is compatible with affine  flat base change of the base, the conclusion of \textit{loc. cit.} holds in the situation of the previous proposition. 
\end{remark}

\begin{cor}
\label{localDualcoro}
Let $\n{F}=\omega^k_{E,n}$ or $\omega^k_{E,n,cusp}$ for $k\neq 1$,  the following holds
\begin{enumerate}

\item The cohomology groups $\mathrm{H}^0(X_n, \n{F}\otimes K/\n{O})$ and $\mathrm{H}^1(X_n, \n{F})$ are cofree and free $\n{O}_n$-modules respectively. 

\item We have a perfect duality pairing 
\[
\mathrm{H}^0(X_n, \n{F}\otimes K/\n{O})\times \mathrm{H}^1(X_n, \n{F}^\vee \otimes \omega^2_{n,cusp})\rightarrow K_n/\n{O}_n. 
\]

\end{enumerate}

\proof
Part (2) is Proposition \ref{LocalDualityCurves}.  Part (1) follows from Remark \ref{Remarkduality} i)   and the previous proposition.   Indeed,  if $k<0$,  the vanishing of $\mathrm{H}^0(X_n, \n{F}\otimes K_n/\n{O}_n)$ implies that $\mathrm{H}^1(X_n, \n{F})$  is torsion free.   As the cohomology group  is of finite type over $\n{O}_n$, it is a finite free $\n{O}_n$-module.  The other cases are proved in a similar way.  

\endproof

\end{cor}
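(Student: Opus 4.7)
The plan is to treat part (2) first and then derive part (1) from it. Part (2) should be an essentially immediate consequence of the local duality theorem for lci projective curves, Proposition \ref{LocalDualityCurves}, applied to the projective lci curve $X_n/\n{O}_n$ together with the locally free sheaf $\n{F}$. To get the pairing in the form stated I would combine this with Proposition \ref{isoDualizingModforms}, which via the Kodaira--Spencer map and the iterated normalized traces $\xi_i$ identifies the Grothendieck dualizing sheaf $\omega_n^{\circ}$ with $\omega_{E,n,cusp}^2$.

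For part (1), the starting observation is that the short exact sequence
\[
0 \to \n{F} \to \n{F}\otimes_{\n{O}_n} K_n \to \n{F}\otimes_{\n{O}_n} K_n/\n{O}_n \to 0,
\]
together with the automatic torsion-freeness of $H^0(X_n,\n{F})$ (since $\n{F}$ is $\n{O}_n$-flat and $X_n$ is proper), produces an exact sequence
\[
0 \to H^0(X_n, \n{F})\otimes_{\n{O}_n} K_n/\n{O}_n \to H^0(X_n, \n{F}\otimes K_n/\n{O}_n) \to H^1(X_n,\n{F})_{\mathrm{tors}} \to 0.
\]
Since $H^0(X_n,\n{F})$ is then free of finite rank and $H^1(X_n,\n{F})$ is finitely generated over the DVR $\n{O}_n$, cofreeness of the middle term is equivalent to vanishing of $H^1(X_n,\n{F})_{\mathrm{tors}}$, i.e.\ to freeness of $H^1(X_n,\n{F})$. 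Thus a single freeness check suffices per case.

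I would then split on $k\neq 1$ using Proposition \ref{vanishingNeg}. For $k<0$, part (1) of that proposition gives outright vanishing $H^0(X_n,\n{F}\otimes K_n/\n{O}_n)=0$, forcing $H^1(X_n,\n{F})_{\mathrm{tors}}=0$ and hence freeness. For $k=0$, part (3) computes $H^0(X_n,\n{F}\otimes K_n/\n{O}_n)\in\{0,K_n/\n{O}_n\}$, both cofree. For $k\geq 2$, parts (2) and (3) give $H^1(X_n,\n{F})=0$ outright except in a single case: $\n{F}=\omega^2_{E,n,cusp}$.

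The main obstacle is exactly that remaining borderline case $k=2$ with $\n{F}=\omega^2_{E,n,cusp}$, where neither side vanishes. To handle it I would feed the already-established part (2) back into the argument: Serre duality identifies $H^1(X_n,\omega^2_{E,n,cusp})$ with the Pontryagin dual of $H^0(X_n,\s{O}_{X_n}\otimes K_n/\n{O}_n)$, and by Proposition \ref{vanishingNeg}(3) the latter equals $K_n/\n{O}_n$, which is cofree of rank $1$. Consequently $H^1(X_n,\omega^2_{E,n,cusp})$ is free of rank $1$, and by the equivalence above the corresponding $H^0(\cdot \otimes K_n/\n{O}_n)$ is cofree of rank $1$, finishing the last case.
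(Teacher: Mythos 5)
Your argument is correct and follows the same overall strategy as the paper's proof: part~(2) is Proposition~\ref{LocalDualityCurves} combined with the identification $\omega_n^\circ\cong\omega^2_{E,n,cusp}$ from Proposition~\ref{isoDualizingModforms}, and part~(1) reduces to the vanishing statements of Proposition~\ref{vanishingNeg}. The route you take for part~(1) is slightly different and, in my view, more transparent than the paper's: the paper invokes Remark~\ref{Remarkduality}~(1), which relates cofreeness of $\mathrm{H}^0(X_n,\n{F}\otimes K/\n{O})$ to freeness of the \emph{dual} group $\mathrm{H}^1(X_n,\n{F}^\vee\otimes\omega_n^\circ)$, and then immediately concludes freeness of $\mathrm{H}^1(X_n,\n{F})$ for the \emph{same} $\n{F}$; the bridge between those two statements is precisely your long-exact-sequence computation giving $0\to \mathrm{H}^0(\n{F})\otimes K/\n{O}\to\mathrm{H}^0(\n{F}\otimes K/\n{O})\to\mathrm{H}^1(\n{F})_{\mathrm{tors}}\to 0$ and the resulting equivalence ``cofree $\iff$ $\mathrm{H}^1(\n{F})_{\mathrm{tors}}=0$''. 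Your case analysis also makes explicit the borderline case $\n{F}=\omega^2_{E,n,cusp}$, which the paper waves away as ``similar''; feeding part~(2) with $\n{F}=\s{O}_{X_n}$ back in, together with Proposition~\ref{vanishingNeg}(3), is exactly the right move. One harmless slip in your final sentence: the cofree rank of $\mathrm{H}^0(X_n,\omega^2_{E,n,cusp}\otimes K/\n{O})$ equals the $\n{O}_n$-rank of $\mathrm{H}^0(X_n,\omega^2_{E,n,cusp})$ (essentially the genus of the curve), not $1$; the free rank of $\mathrm{H}^1$ and the cofree rank of $\mathrm{H}^0(\cdot\otimes K/\n{O})$ are distinct integers. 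Since the corollary only asserts cofreeness and not a rank, this does not affect the validity of your proof.
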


Next, we will prove some cohomological vanishing  results for the modular sheaves $\omega_E^k$ and $\omega^k_{E,cusp}$  at infinite level.  Particularly,   we will show that the cohomology of $\omega_E^k$ over $\f{X}_{\infty}$ is concentrated in degree $0$ if $k>0$.    The case  $k> 2$ will follow  from  Proposition \ref{vanishingNeg},  one can also argue directly for $k=2$.   What is remarkable is the vanishing for $k=1$, in which case we use the perfectoid nature of $\f{X}_{\infty}$.  

 Let $\omega_{E,\infty}^{k}$ be the pullback of $\omega_E^k$ to $\f{X}_{\infty}$.   Let $m\geq n$,  note that  we have an inequality of divisors $D_m\leq D_n$.  Then,  $\s{O}_{X_m}(-D_n)\subset \s{O}_{X_m}(-D_m)$, and  the pullback of $\omega^k_{E,n,cusp}$ injects into $\omega^k_{E,m,cusp}$.   We define $\omega^{k}_{E,\infty,cusp}$ as  the $p$-adic completion of the direct limit $\varinjlim_n \omega^k_{E,n,cusp}$, if $k=0$ we simply write $\s{O}_{\f{X}_{\infty}}(-D_{\infty})$ for $\omega^0_{E,\infty, cusp}$.    The sheaf $\omega^k_{E,\infty,cusp}$ is no longer a coherent sheaf over $\f{X}_\infty$; its reduction modulo $p$ is a direct limit of line bundles which is not stationary at the cusps.   One way to think about an element in $\omega^k_{E,\infty,cusp}$ is via $q$-expansions:   the completed localization of $\omega^k_{E,\infty}$ at a cusp $x=(x_0,x_1,\cdots)\in \f{X}_{\infty}$ is isomorphic to  
\[
\n{O}^{cyc}[[q^{1/p^\infty}]]:= \varprojlim_{s}  ( \varinjlim_n  \n{O}^{cyc}[[q^{1/p^n}]])/(p,q)^s.
\] 
Then,  an element $f\in \omega^k_{E,\infty,x}$ can be written as a power series 
\[
f= \sum_{m\in \bb{Z}[\frac{1}{p}]_{\geq 0}} a_m q^{m}
\]
satisfying certain convergence conditions.   The element $f$ belongs to the localization at $x$ of $\omega^k_{E,\infty, cusp}$ if and only if $a_0=0$.  For a  detailed treatment of the cusps at perfectoid level we refer to \cite{heuer2020cusps},  particularly Theorem 3.17.

\begin{theo}
\label{corVanishingCohPerfectoid}
The following holds

\begin{enumerate}

\item The cohomology complexes $\R\Gamma(\f{X}_{\infty}, \omega^k_{E,\infty})$ and $\R\Gamma(\f{X}_{\infty}, \omega^k_{E,\infty,cusp})$ are concentrated in degree $[0,1]$ for all $k\in \bb{Z}$.   

\item For all $m,i\geq 0$ and $k\in \bb{Z}$, we have  $\mathrm{H}^i(\f{X}_{\infty}, \omega^k_{E,\infty}/p^m)=\varinjlim_n \mathrm{H}^i(X_n, \omega^k_{E,n}/p^m)$ and  $\mathrm{H}^i(\f{X}_{\infty}, \omega^k_{E,\infty,cusp}/p^m)=\varinjlim_n \mathrm{H}^i(X_n, \omega^k_{E,n,cusp}/p^m)$.

\item  The sheaves $\omega_{E,\infty}^{k}$ and $\omega_{E,\infty,cusp}^k$ have cohomology concentrated in degree $0$ for $k>0$.  Similarly,   the sheaves $\omega_{E,\infty}^{k}$ and $\omega_{E,\infty,cusp}^{k}$ have cohomology concentrated in degree $1$ for $k<0$.

\item $\mathrm{H}^0(\f{X}_{\infty}, \s{O}_{\f{X}_{\infty}}(-D_{\infty}))=0$ and  $\mathrm{H}^0(\f{X}_{\infty}, \s{O}_{\f{X}_{\infty}})= \n{O}^{cyc} $.

\end{enumerate}

\proof
Let $\n{F}= \omega^k_{E,\infty}$ or $\omega^{k}_{E,\infty,cusp}$ and $\n{F}_n=\omega^k_{E,n}$ or $\omega^k_{E,n,cusp}$ respectively.  We show (1) assuming part  (2).  By evaluating $\n{F}$ at formal affine perfectoids of $\f{X}_{\infty}$ arising from finite level,  one can use Lemma 3.18 of \cite{MR3090230} to deduce that  $\n{F}= \R\varprojlim_s \n{F}/p^s$:  the case $\n{F}= \omega_{E,\infty}^k$ is clear as it is a line bundle.   Otherwise, we know that $\n{F}/p^s=\varinjlim_n \n{F}_n/p^s=\varinjlim_n (\n{F}_n/p^s\otimes_{X_n}\s{O}_{\f{X}_{\infty}})$ is a direct limit of $\s{O}_{\f{X}_{\infty}}/p^s$-line  bundles,  so that it is a quasi-coherent sheaf over $\f{X}_{\infty}$,  and the system  $\{\n{F}/p^s\}_{s\in \N}$  satisfies the Mittag-Leffler condition on  formal  affine perfectoids.   One obtains the quasi-isomorphism 
\[
\R\Gamma(\f{X}_{\infty}, \n{F})= \R\varprojlim_s \R\Gamma(\f{X}_{\infty},  \n{F}/p^s)
\]
whose cohomology translates into short exact sequences
\begin{equation}
\label{eqshortseqrlim}
0\rightarrow \R^1\varprojlim_s  \mathrm{H}^{i-1}(\f{X}_{\infty}, \n{F}/p^s) \rightarrow \mathrm{H}^{i}(\f{X}_{\infty}, \n{F}) \rightarrow \varprojlim_{s}  \mathrm{H}^{i}(\f{X}_{\infty}, \n{F}/p^s) \rightarrow 0.
\end{equation}
But part (2) implies that  $\mathrm{H}^i(\f{X}_{\infty},  \n{F}/p^s)= \varinjlim_{n} \mathrm{H}^i(X_n,  \n{F}_n/p^s)$ for all $s\in \bb{N}$.  As $X_n$ is a curve over $\n{O}_n$ and $\n{F}_n/p^s$ is supported in its special fiber, we know that $ \mathrm{H}^i(X_n,  \n{F}_n/p^s)=0$ for $i\geq 2$ and that the inverse system $\{\mathrm{H}^1(X_n, \n{F}_n/p^s)\}_{s\in \bb{N}}$  satisfies the ML condition.  This implies that $\mathrm{H}^i(\f{X}_{\infty}, \n{F}/p^s)=0$ for $i\geq 2$ and that the ML condition holds for  $\{\mathrm{H}^1(\f{X}_{\infty}, \n{F}/p^s)\}_{s\in \bb{N}}$.   From (\ref{eqshortseqrlim}) one obtains that $\mathrm{H}^i(\f{X}_{\infty}, \n{F})=0$ for $i\geq 2$.

We prove part (2).  Let $\f{U}=\{U_i\}_{i\in I}$ be a finite affine cover of $X$, let $\f{U}_n$  (resp. $\f{U}_{\infty}$) be its pullback to $X_n$ (resp. $\f{X}_{\infty}$).  As $\n{F}/p^s=\varinjlim_n \n{F}_n/p^s$  is a quasi-coherent $\s{O}_{\f{X}_{\infty}}/p^s$-module,     and the  (formal) schemes  $\f{X}_{\infty}$ and $X_n$  are separated,   we can use the \v{C}ech complex of $\f{U}_n$ (resp.  $\f{U}_{\infty}$) to compute the cohomology groups.  By definition we have 
\begin{eqnarray*}
\s{C}^\bullet(\f{U}_{\infty}, \n{F}/p^s) & = & \varinjlim_n \s{C}^\bullet(\f{U}_n, \n{F}_n/p^s), 
\end{eqnarray*}
then  (2) follows as filtered direct limits are exact.

The vanishing results of Proposition \ref{vanishingNeg} imply (3) for $k<0$ and $k>2$.  Let $k=1,2$ and   $p^{1/p}\in \n{O}^{cyc}$ be such that $|p^{1/p}|=|p|^{1/p}$.  As $\f{X}_{\infty}$ is integral perfectoid, the Frobenius $F:\f{X}_{\infty}/p\rightarrow \f{X}_{\infty}/p^{1/p}$ is an isomorphism.  Moreover,  $F^*(\omega_{E,\infty}^{k}/p^{1/p})=\omega_{E,\infty}^{pk}/p$ and $F^*(\omega_{E,\infty,cusp}^k/p^{1/p})=\omega_{E,\infty,cusp}^{pk}/p$ (notice that $F^*(D_n)=pD_n= D_{n-1}$).  Then,   Proposition \ref{vanishingNeg} (2) implies  
\begin{equation}
\label{vanishingperfint}
\mathrm{H}^1(\f{X}_{\infty}, \omega^k_{E,\infty}/p^{1/p})\cong \mathrm{H}^1(\f{X}_{\infty}, \omega^{pk}_{E,\infty}/p)=0,
\end{equation}
similarly for $\omega^{k}_{E,\infty,cusp}$.     By induction on $s$,  one shows that   $\mathrm{H}^1(\f{X}_{\infty},  \omega_{E,\infty}^k/p^s)=0$ and that $\mathrm{H}^0(\f{X}_{\infty}, \omega^k_{E,\infty}/p^{s+1})\rightarrow  \mathrm{H}^0(\f{X}_{\infty},  \omega^k_{E,\infty}/p^s)$ is surjective for all $s\in \bb{N}$ (resp. for $\omega_{E,\infty,cusp}^{k}$).  Taking derived inverse limits one gets $\mathrm{H}^1(\f{X}_{\infty}, \omega_{E,\infty}^k)=\mathrm{H}^1(\f{X}_{\infty}, \omega_{E,\infty,cusps}^k)=0$ and $\mathrm{H}^0(\f{X}_{\infty},  \omega^k_{E,\infty})= \varprojlim_s \mathrm{H}^0(\f{X}_{\infty},  \omega^k_{E,\infty}/p^s)$ (resp. for $\omega^k_{E,\infty, cusp}$).  This proves (3)  for $k=1,2$. 

Finally,  part (4) follows from part (2),   Proposition \ref{vanishingNeg} (3),   and the fact that  \[\mathrm{H}^0(\f{X}_{\infty}, \s{O}_{\f{X}_\infty})= \varprojlim_s \mathrm{H}^0(\f{X}_{\infty}, \s{O}_{\f{X}_\infty}/p^s)\]  by (\ref{eqshortseqrlim}) (resp. for $\s{O}_{\f{X}_{\infty}}(-D_{\infty})$).  

\endproof
\end{theo}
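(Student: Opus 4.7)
The plan is to establish the four parts in the order (2), (1), (3), (4), since each builds on the previous ones. For (2), I would fix a finite affine open cover $\f{U}=\{U_i\}_{i\in I}$ of $X$ and pull it back to covers $\f{U}_n$ of $X_n$ and $\f{U}_{\infty}$ of $\f{X}_{\infty}$. Because both the formal scheme $\f{X}_{\infty}$ and each $X_n$ are separated, and $\n{F}/p^s = \varinjlim_n \n{F}_n/p^s$ is a filtered colimit of quasi-coherent sheaves pulled back from finite level, \v{C}ech cohomology of the open cover computes the sheaf cohomology of $\n{F}/p^s$. The crucial observation is then
\[
\s{C}^\bullet(\f{U}_{\infty}, \n{F}/p^s) \;=\; \varinjlim_n \s{C}^\bullet(\f{U}_n, \n{F}_n/p^s),
\]
and exactness of filtered colimits gives (2).

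For (1), I would first verify $\n{F} = \R\varprojlim_s \n{F}/p^s$ on $\f{X}_{\infty}$: this is immediate for the line bundle $\omega^k_{E,\infty}$, and for the cuspidal variant it amounts to checking Mittag--Leffler of $\{\n{F}/p^s\}$ on formal affine perfectoids (via Lemma 3.18 of \cite{MR3090230}). This produces a short exact sequence
\[
0\to \R^1\varprojlim_s \mathrm{H}^{i-1}(\f{X}_{\infty}, \n{F}/p^s) \to \mathrm{H}^i(\f{X}_{\infty}, \n{F}) \to \varprojlim_s \mathrm{H}^i(\f{X}_{\infty}, \n{F}/p^s) \to 0.
\]
Combined with (2) and the fact that each $X_n$ is a curve over $\n{O}_n$ with $\n{F}_n/p^s$ supported on its special fiber (so $\mathrm{H}^{\geq 2}$ vanishes at finite level and the $\mathrm{H}^1$-system satisfies ML), this yields (1).

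For part (3), I would split into three ranges of $k$. For $k > 2$, Proposition \ref{vanishingNeg}(2) gives the vanishing of $\mathrm{H}^1$ at finite level, which propagates through (2) and the $\R\varprojlim$ sequence. For $k < 0$, Corollary \ref{localDualcoro} together with Proposition \ref{vanishingNeg}(1) forces $\mathrm{H}^0$ to vanish at finite level, again propagating upward. The interesting cases are $k=1,2$: here I invoke perfectoidness of $\f{X}_{\infty}$ directly. Since the absolute Frobenius $F:\f{X}_{\infty}/p \to \f{X}_{\infty}/p^{1/p}$ is an isomorphism and pullback identifies $F^*(\omega^k_{E,\infty}/p^{1/p}) = \omega^{pk}_{E,\infty}/p$ (with the analogous identification for the cuspidal variant, using $F^*(D_n)=pD_n = D_{n-1}$ after reduction modulo $p$), and since $pk\geq 2$, Proposition \ref{vanishingNeg}(2)--(3) gives the vanishing of $\mathrm{H}^1$ at finite level of the shifted weight. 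An induction on $s$ then lifts this to $\mathrm{H}^1(\f{X}_{\infty}, \n{F}/p^s)=0$ for all $s$, and the $\R\varprojlim$ sequence finishes (3). Part (4) follows by combining (2) with the $\mathrm{H}^0$ computations of Proposition \ref{vanishingNeg}(3) and the $\R\varprojlim$ sequence from (1).

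The main obstacle I foresee is justifying the Frobenius trick cleanly: one must verify that the absolute Frobenius on $\f{X}_{\infty}/p$ really does pull back $\omega^k_{E,\infty}/p^{1/p}$ to $\omega^{pk}_{E,\infty}/p$, and correspondingly identifies the cuspidal divisors via $F^*(D_n)=D_{n-1}$ modulo $p$. This is exactly where the perfectoid structure from Section \ref{Proof Theorem} is essential, and where the explicit local descriptions at ordinary points, cusps, and supersingular points (Propositions \ref{theodefringord}, \ref{Theorem structure cusps 2}, \ref{propSuperSingFrob}) enter implicitly by ensuring that the Frobenius on the completed stalks behaves as a $p$-power on the pertinent uniformizers.
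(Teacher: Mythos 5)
Your proposal follows essentially the same argument as the paper's proof: \v{C}ech computation with a pulled-back finite affine cover for part (2), the $\R\varprojlim$ short exact sequence via Lemma 3.18 of \cite{MR3090230} for part (1), Proposition \ref{vanishingNeg} for the ranges $k<0$ and $k>2$, and the Frobenius twist $F^*(\omega^k_{E,\infty}/p^{1/p})\cong\omega^{pk}_{E,\infty}/p$ combined with induction on $s$ for the crucial cases $k=1,2$. The only presentational difference is that you establish (2) before (1), whereas the paper states (1) first conditional on (2) and then proves (2); the logical content is identical.
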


\begin{cor}
\label{CoroHiTorsionfree}
Let $\n{F}=\omega^k_{E,\infty}$ or $\omega^k_{E,\infty,cusp}$ for $k\in \bb{Z}$.  Then $\mathrm{H}^i(\f{X}_{\infty},  \n{F})/p^s= \mathrm{H}^i(\f{X}_{\infty}, \n{F}/p^s  )$ and $ \mathrm{H}^i(\f{X}_{\infty}, \n{F}  )=\varprojlim_s \mathrm{H}^i(\f{X}_{\infty}, \n{F} /p^s)$ for all $i,s\geq 0$.  In particular,  the cohomology groups $ \mathrm{H}^i(\f{X}_{\infty}, \n{F})$ are $p$-adically complete and separated.  Moreover, they are all torsion free. 
\proof
The case $k\neq 0$ follows since the cohomology complexes $\R\Gamma(\f{X}_{\infty},  \n{F}/p^s)$ are concentrated in only one degree, and $\R \Gamma(\f{X}_{\infty},\n{F})=\R \varprojlim_s \R\Gamma(\f{X}_{\infty}, \n{F}/p^s)$.  The case $k=0$ follows by part (4) of the previous theorem.  Namely,  $\mathrm{H}^0(\f{X}_{\infty},  \s{O}_{\f{X}_{\infty}}(-D_{\infty})/p^s)=0$ and $\mathrm{H}^0(\f{X}_{\infty},  \s{O}_{\f{X}_{\infty}}/p^s)=\n{O}^{cyc}/p^s$ for all $s\geq 0$.  Hence,  the inverse system of $\mathrm{H}^0$-cohomology groups satisfy the Mittag-Leffler condition,  and the $\R^1 \varprojlim$ appearing in the derived inverse  limit disappears for the $\mathrm{H}^1$-cohomology.    
\endproof
\end{cor}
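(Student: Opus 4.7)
The plan is to deduce the corollary from the structural identity $\R\Gamma(\f{X}_\infty, \n{F}) = \R\varprojlim_s \R\Gamma(\f{X}_\infty, \n{F}/p^s)$, which was established during the proof of Theorem \ref{corVanishingCohPerfectoid}(1) via Lemma 3.18 of \cite{MR3090230}, combined with the concentration-in-degree results of that theorem. The key observation is that for $k \neq 0$ the long exact sequence coming from $0 \to \n{F} \xrightarrow{p^s} \n{F} \to \n{F}/p^s \to 0$ collapses whenever both $\R\Gamma(\f{X}_\infty, \n{F})$ and $\R\Gamma(\f{X}_\infty, \n{F}/p^s)$ sit in the same single degree, which will give torsion-freeness and the base-change identity simultaneously.

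For $k>0$, part (3) of the theorem gives concentration of $\R\Gamma(\f{X}_\infty, \n{F})$ in degree $0$, while the vanishing $\mathrm{H}^1(\f{X}_\infty, \n{F}/p^s)=0$ established inductively in the proof of part (3) (using the Frobenius isomorphism together with Proposition \ref{vanishingNeg}(2)) gives concentration of $\R\Gamma(\f{X}_\infty, \n{F}/p^s)$ in degree $0$ as well. Symmetrically, for $k<0$ both complexes are concentrated in degree $1$: for $\R\Gamma(\f{X}_\infty, \n{F}/p^s)$ this follows because $\mathrm{H}^0(\f{X}_\infty, \n{F}/p^s)=\varinjlim_n \mathrm{H}^0(X_n, \n{F}_n/p^s)$ by part (2) of the theorem, and the induction used in the proof of Proposition \ref{vanishingNeg}(1) shows that each $\mathrm{H}^0(X_n, \n{F}_n/p^s)$ vanishes. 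In each single-degree case the long exact sequence forces $p^s$ to be injective on the unique nonzero $\mathrm{H}^j(\f{X}_\infty, \n{F})$ and identifies $\mathrm{H}^j(\f{X}_\infty, \n{F})/p^s$ with $\mathrm{H}^j(\f{X}_\infty, \n{F}/p^s)$, yielding torsion-freeness and the base-change statement at once. Concentration also makes the relevant $\R^1\varprojlim$-term in the Milnor sequence trivially vanish, so $\mathrm{H}^j(\f{X}_\infty, \n{F})=\varprojlim_s \mathrm{H}^j(\f{X}_\infty, \n{F}/p^s)$.

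The case $k=0$ requires part (4) of the theorem. For $\n{F}=\s{O}_{\f{X}_\infty}(-D_\infty)$, both $\mathrm{H}^0(\f{X}_\infty, \n{F})$ and $\mathrm{H}^0(\f{X}_\infty, \n{F}/p^s)$ vanish (the latter by the same direct-limit-of-Proposition \ref{vanishingNeg}(3) argument as above), so after reduction mod $p^s$ the complex is again concentrated in degree $1$ and the argument proceeds as before. For $\n{F}=\s{O}_{\f{X}_\infty}$, the explicit computations $\mathrm{H}^0(\f{X}_\infty, \s{O}_{\f{X}_\infty})=\n{O}^{cyc}$ and $\mathrm{H}^0(\f{X}_\infty, \s{O}_{\f{X}_\infty}/p^s)=\n{O}^{cyc}/p^s$ in the long exact sequence force $\mathrm{H}^1(\f{X}_\infty, \s{O}_{\f{X}_\infty})[p^s]=0$ and identify $\mathrm{H}^1(\f{X}_\infty, \s{O}_{\f{X}_\infty})/p^s$ with $\mathrm{H}^1(\f{X}_\infty, \s{O}_{\f{X}_\infty}/p^s)$; moreover the surjectivity of the transition maps $\n{O}^{cyc}/p^{s+1}\to \n{O}^{cyc}/p^s$ ensures the Mittag--Leffler condition on the $\mathrm{H}^0$-inverse system, so the $\R^1\varprojlim$-term disappears and one concludes $\mathrm{H}^1(\f{X}_\infty, \s{O}_{\f{X}_\infty})=\varprojlim_s \mathrm{H}^1(\f{X}_\infty, \s{O}_{\f{X}_\infty}/p^s)$.

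There is no serious obstacle here, as the corollary is essentially a bookkeeping exercise: the only mild subtlety is checking, case by case, that the system $\{\mathrm{H}^{i-1}(\f{X}_\infty, \n{F}/p^s)\}_s$ satisfies Mittag--Leffler (so that the Milnor sequence degenerates). For $k\neq 0$ this is automatic from single-degree concentration, and for $k=0$ it reduces to the explicit computations of part (4) of Theorem \ref{corVanishingCohPerfectoid}.
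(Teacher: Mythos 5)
Your proof is correct and takes essentially the same approach as the paper: both arguments rest on the identity $\R\Gamma(\f{X}_\infty,\n{F})=\R\varprojlim_s\R\Gamma(\f{X}_\infty,\n{F}/p^s)$ together with the single-degree concentration from Theorem \ref{corVanishingCohPerfectoid}. The only stylistic difference is that you make the long exact sequence of $0\to\n{F}\xrightarrow{p^s}\n{F}\to\n{F}/p^s\to 0$ explicit to extract torsion-freeness and the base-change identity, a step the paper's very terse proof leaves implicit; the Mittag--Leffler analysis for $k=0$ is the same in both.
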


As an application of the previous vanishing theorem,  we obtain vanishing results for the coherent  cohomology of the perfectoid modular curve.  Let  $(\n{X}_{\infty}, \s{O}_{\n{X}_{\infty}}^+)\rightarrow (\f{X}_{\infty}, \s{O}_{\s{O}_{X_{\infty}}})$ be the natural map of locally and topologically  ringed spaces provided by the generic fiber functor,  see Proposition  \ref{PropgenericFiberFunctor} and Theorem \ref{theoexistperfint}.  We define $\omega^{k,+}_{E,\eta}:= \omega_{E,\infty}^k \otimes_{\s{O}_{\f{X}_{\infty}}} \s{O}_{\n{X}_{\infty}}^+$ and $\omega^{k,+}_{E,cusp,\eta}:=\omega_{E,\infty,cusp}^k\widehat{\otimes}_{\s{O}_{\f{X}_{\infty}}} \s{O}_{\n{X}_{\infty}}^+$, where the completed tensor product is with respect to the $p$-adic topology.   As usual, we denote $\s{O}_{\n{X}_{\infty}}^+(-D_{\infty})= \omega_{E,cusp}^{0,+}$.   In the following we consider almost mathematics with respect to the maximal ideal of $\n{O}^{cyc}$. 

\begin{cor} 
\label{CoroCoherentPerfCurve}
The following holds 

\begin{enumerate}

\item The  cohomology complexes $\R\Gamma_{an}(\n{X}_{\infty},  \omega_{E,\eta}^{k,+})$ and $\R\Gamma_{an}(\n{X}_{\infty}, \omega_{E,cusp,\eta}^{k,+})$ of almost $\n{O}^{cyc}$-modules  are concentrated in degrees $[0,1]$ for all $k\in \bb{Z}$. 

\item  The sheaves $\omega_{E,\eta}^{k,+}$ and $\omega_{E,cusp,\eta}^{k,+}$ have cohomology   almost concentrated in degree $0$ for $k>0$.  Similarly,  the sheaves  $\omega_{E,\eta}^{k,+}$ and $\omega_{E,cusp,\eta}^{k,+}$  have cohomology  almost concentrated in degree $1$ for $k<0$.

\item $\mathrm{H}_{an}^0(\n{X}_{\infty}, \s{O}_{\n{X}_{\infty}}^+(-D_{\infty}))= 0$ and $\mathrm{H}_{an}^0(\n{X}_{\infty}, \s{O}_{\n{X}_{\infty}}^+)= \n{O}^{cyc} $.
\end{enumerate}

\proof
  We first prove the corollary   for $\n{F}=\omega_{E,\infty}^k$.  Let  $\n{F}^+_{\eta}$ denote the pullback of $\n{F}$ to $(\n{X}_{\infty}, \s{O}^+_{\n{X}_{\infty}})$.  Let $\f{U}=\{\n{U}_i\}_{i\in I}$ be an open cover of $\f{X}_{\infty}$ given by formal affine perfectoids arising from finite level such that $\omega_{E,\infty}|_{\n{U}_i}$ is trivial.   By Theorem \ref{theoexistperfint},  the generic fiber $ \n{U}_{i,\eta} $ of $\n{U}_i$  is an open affinoid perfectoid  subspace of $\n{X}_{\infty}$.  Let $\f{U}_{\eta}:= \{ \n{U}_{i,\eta}\}_{i\in I}$,  note that $\f{U}_{\eta}$ is a covering of $\n{X}_{\infty}$ and that the  restriction of $\n{F}^+_{\eta}$ to $\n{U}_{i,\eta}$ is trivial.  
By Scholze's  Almost Acyclicity Theorem for affinoid perfectoids,  $\n{F}^+_{\eta}|_{\n{U}_{i,\eta}}$ is almost acyclic for all $i\in I$.  The  \v{C}ech-to-derived functor spectral sequence gives us an almost quasi-isomorphism
\[
\s{C}^{\bullet}( \f{U}_{\eta},  \n{F}^+_{\eta} ) \simeq  \R\Gamma_{an}(\n{X}_{\infty}, \n{F}^+_{\eta}). \] 
On the other hand,  by the proof of Theorem  \ref{corVanishingCohPerfectoid} there is a  quasi-isomorphism 
\[
\s{C}^{\bullet}(\f{U},  \n{F} )\simeq \R\Gamma(\f{X}_{\infty}, \n{F}).
\]
But by definition of $\n{F}^+_{\eta}$,  and the fact that $\s{O}^+_{\n{X}_{\infty}}(\n{U}_{i,\eta})= \s{O}_{\f{X}_{\infty}}(\n{U}_i)$ by  Theorem \ref{theoexistperfint},   we actually have an almost equality $\s{C}^{\bullet}(\f{U}_{\eta}, \n{F}^+_{\eta}) =^{ae} \s{C}^{\bullet}(\f{U}, \n{F})$.   In other words, there is an almost quasi-isomorphism $  \R\Gamma_{an}(\n{X}_{\infty}, \n{F}^+_{\eta}) \simeq^{ae}  \R\Gamma(\f{X}_{\infty}, \n{F})$. 

Let $\n{F}_{cusp}=\omega_{E,\infty,cusp}^{k}$ and $\n{F}^+_{cusp, \eta}$ its pullback  to $(\n{X}_{\infty}, \s{O}^+_{\n{X}_{\infty}})$. To prove that \newline $ \R\Gamma_{an}(\n{X}_{\infty}, \n{F}^+_{cusp,\eta}) \simeq^{ae}  \R\Gamma(\f{X}_{\infty}, \n{F}_{cusp})$ we argue as follows:  note that we can write $\n{F}_{cusp}= \n{F}\otimes_{\s{O}_{\f{X}_{\infty}}} \s{O}_{\f{X}_{\infty}}(-D_{\infty})$. To apply the same argument as before we only need to show that $\s{O}_{\n{X}_{\infty}}^+(-D_{\infty})$ is almost acyclic over affinoid perfectoids of $\n{X}_{\infty}$.   Let $V(D_{\infty})\subset \n{X}_{\infty}$ be the perfectoid closed subspace defined by the cusps.  Note that $\s{O}_{\n{X}_{\infty}}(-D_\infty)$ is the ideal sheaf of $V(D_{\infty})$,   see the proof of  \cite[Theo.  IV.2.1]{scholzeTorsCoho2015} or  the explicit description of the completed stalks at the cusps of the integral  perfectoid modular curve.   Then,  we have an almost short exact sequence  for all $s\in \bb{N}$ 
\begin{equation}
\label{eqShortSequececusps}
0\rightarrow \s{O}^+_{\n{X}_{\infty}}(-D_{\infty})/p^s \rightarrow  \s{O}^+_{\n{X}_{\infty}}/p^s \rightarrow \s{O}^+_{V(D_{\infty})}/p^s \rightarrow 0.
\end{equation}
As the intersection of an affinoid perfectoid of $\n{X}_{\infty}$ with  $V(D_{\infty})$ is affinoid perfectoid,  and the second map of (\ref{eqShortSequececusps})   is surjective when evaluating at affinoid perfectoids of $\n{X}_{\infty}$,    Scholze's almost acyclicity  implies that $\s{O}^+_{\n{X}_{\infty}}(-D_{\infty})/p^s$ is almost acyclic in affinoid perfectoids.  Taking inverse limits and noticing that $\{\s{O}^+_{\n{X}_{\infty}}(-D_{\infty})/p^s \}_{s\in \bb{N}}$ satisfies the ML condition in affinoid perfectoids,  we get that $\s{O}^+_{\n{X}_{\infty}}(-D_{\infty})$  is almost acyclic in affinoid perfectoids of $\n{X}_{\infty}$.   The corollary follows from the vanishing results at the level of formal schemes.  
\endproof

\end{cor}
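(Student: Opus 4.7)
My plan is to reduce the analytic almost cohomology on $\n{X}_{\infty}$ to the formal scheme cohomology on $\f{X}_{\infty}$, where Theorem \ref{corVanishingCohPerfectoid} already gives the desired vanishing. The comparison map comes from the canonical morphism of locally and topologically ringed spaces $(\n{X}_{\infty}, \s{O}^+_{\n{X}_{\infty}}) \to (\f{X}_{\infty}, \s{O}_{\f{X}_{\infty}})$ provided by the generic fiber functor of Proposition \ref{PropgenericFiberFunctor} and Theorem \ref{theoexistperfint}.

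The key step is to establish an almost quasi-isomorphism $\R\Gamma_{an}(\n{X}_{\infty}, \n{F}^+_{\eta}) \simeq^{ae} \R\Gamma(\f{X}_{\infty}, \n{F})$ for $\n{F} = \omega^k_{E,\infty}$ or $\omega^k_{E,\infty, cusp}$. To do this, I would first pick a finite open cover $\f{U} = \{\n{U}_i\}_{i\in I}$ of $\f{X}_{\infty}$ by formal affine perfectoids arising from finite level on which $\omega_{E,\infty}^k$ trivializes; by Theorem \ref{theoexistperfint}, the generic fibers $\n{U}_{i, \eta}$ form a cover of $\n{X}_{\infty}$ by affinoid perfectoid opens and we have $\s{O}^+_{\n{X}_{\infty}}(\n{U}_{i,\eta}) = \s{O}_{\f{X}_{\infty}}(\n{U}_i)$. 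Scholze's almost acyclicity theorem then ensures that $\omega^{k,+}_{E,\eta}$ is almost acyclic on intersections of the cover, so the \v{C}ech-to-derived spectral sequence gives an almost quasi-isomorphism with the \v{C}ech complex, which on inspection is almost equal to the corresponding \v{C}ech complex on $\f{X}_{\infty}$ that computes $\R\Gamma(\f{X}_{\infty}, \omega_{E,\infty}^k)$ (by separatedness of $\f{X}_{\infty}$, as in the proof of Theorem \ref{corVanishingCohPerfectoid}).

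The main obstacle is the cuspidal sheaf $\omega^k_{E,\infty,cusp}$: it is not a coherent sheaf on $\f{X}_{\infty}$ but only a $p$-adic completion of a direct limit of line bundles, and a priori it is not clear that $\omega^{k,+}_{E,cusp,\eta} = \omega^{k,+}_{E,\eta} \widehat{\otimes} \s{O}^+_{\n{X}_{\infty}}(-D_{\infty})$ is almost acyclic on affinoid perfectoids. To handle this I would write $\omega^k_{E,\infty,cusp} = \omega^k_{E,\infty} \otimes \s{O}_{\f{X}_{\infty}}(-D_{\infty})$, reducing the problem to showing that $\s{O}^+_{\n{X}_{\infty}}(-D_{\infty})$ is almost acyclic on affinoid perfectoids. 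For this I would exploit that $\s{O}^+_{\n{X}_{\infty}}(-D_{\infty})$ is the ideal sheaf of the perfectoid closed subspace $V(D_{\infty})$ cut out by the cusps (this can be read off from the explicit description of the completed stalks in Proposition \ref{Theorem structure cusps 2}), yielding the almost short exact sequence
\[
0 \to \s{O}^+_{\n{X}_{\infty}}(-D_{\infty})/p^s \to \s{O}^+_{\n{X}_{\infty}}/p^s \to \s{O}^+_{V(D_{\infty})}/p^s \to 0.
\]
Since the intersection of an affinoid perfectoid with $V(D_{\infty})$ is again affinoid perfectoid and the right-hand map is surjective on sections, Scholze's almost acyclicity applied to the outer two terms forces the left-hand term to be almost acyclic modulo $p^s$; passing to the limit via Mittag--Leffler gives the desired almost acyclicity of $\s{O}^+_{\n{X}_{\infty}}(-D_{\infty})$.

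Once the comparison isomorphism is in place, all three statements of the corollary follow directly by transport: part (1) from the concentration in $[0,1]$ in Theorem \ref{corVanishingCohPerfectoid}(1), part (2) from Theorem \ref{corVanishingCohPerfectoid}(3) combined with the finite level vanishing of Proposition \ref{vanishingNeg}, and part (3) from Theorem \ref{corVanishingCohPerfectoid}(4). The only real content beyond Theorem \ref{corVanishingCohPerfectoid} is thus the almost-acyclicity verification at the cusps, which is where I expect the technical care to lie.
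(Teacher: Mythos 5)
Your proposal follows the paper's proof essentially step for step: choosing a cover by formal affine perfectoids trivializing $\omega_{E,\infty}^k$, matching \v{C}ech complexes on both sides via $\s{O}^+_{\n{X}_{\infty}}(\n{U}_{i,\eta}) = \s{O}_{\f{X}_{\infty}}(\n{U}_i)$, invoking Scholze's almost acyclicity, and treating the cuspidal sheaf by tensoring with $\s{O}_{\f{X}_{\infty}}(-D_{\infty})$ and running the ideal-sheaf short exact sequence mod $p^s$ before passing to the limit. This matches the paper's argument, including the identification of the technical crux at the cusps.
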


\begin{remark}
 As it was mentioned to me by Vincent Pilloni, the cohomological vanishing of the modular sheaves at infinite level provides many different exact sequences involving modular forms and the completed cohomology of the modular tower (to be defined in the next subsection).     Namely,    the primitive comparison theorem permits to compute the $\bb{C}_p$-scalar  extension of the completed cohomology as   $\mathrm{H}_{an}^1(\n{X}_{\infty,\bb{C}_p}, \s{O}_{\n{X}_\infty})$.  On the other hand,  the Hodge-Tate exact sequence  
 \[
 0\rightarrow \omega_E^{-1} \otimes_{\s{O}_{\n{X}}} \widehat{\s{O}}_{\n{X}} \rightarrow T_pE \otimes_{\widehat{\bb{Z}}_p} \widehat{\s{O}}_{\n{X}}  \rightarrow \omega_E\otimes_{\s{O}_{\n{X}}}\widehat{\s{O}}_{\n{X}}  \rightarrow 0
 \]
gives a short exact sequence over $\n{X}_{\infty}$
 \begin{equation}
 \label{eqHodgeTate}
  0\rightarrow \omega^{-1}_{E,\eta } \rightarrow \s{O}_{\n{X}_\infty,\bb{C}_p}^{\oplus 2} \rightarrow \omega_{E,\eta} \rightarrow 0
 \end{equation}
 via the universal trivialization of $T_pE$.   Then,  taking  the cohomology of  (\ref{eqHodgeTate}) one  obtains an exact sequence 
 \[
 0\rightarrow \bb{C}_p^{\oplus 2}\rightarrow \mathrm{H}_{an}^0(\n{X}_{\infty,\bb{C}_p}, \omega_{E,\eta}) \rightarrow \mathrm{H}_{an}^1(\n{X}_{\infty,\bb{C}_p}, \omega_{E,\eta}^{-1}) \rightarrow  \mathrm{H}_{an}^1(\n{X}_{\infty,\bb{C}_p} , \s{O}_{\n{X}_{\infty}})^{\oplus 2}\rightarrow 0.
 \]
 Another is example is given by tensoring (\ref{eqHodgeTate}) with $\omega_{E}$ and taking cohomology. One finds 
 \[
 0\rightarrow \bb{C}_p \rightarrow \mathrm{H}_{an}^0(\n{X}_{\infty,\bb{C}_p}, \omega_{E,\eta})^{\oplus 2}  \rightarrow \mathrm{H}_{an}^0(\n{X}_{\infty,\bb{C}_p}, \omega_{E,\eta}^{2}) \rightarrow \mathrm{H}_{an}^1(\n{X}_{\infty,\bb{C}_p} , \s{O}_{\n{X}_{\infty}})  \rightarrow 0.
 \]
It may be interesting a more careful study of these exact sequences.  
\end{remark}

\subsection*{Duality at infinite level}

Let $\n{F}=\omega^k_{E,\infty}$ or $\omega^{k}_{E,\infty,cusp}$ for $k\in \bb{Z}$,   let $\n{F}_n= \omega^k_{E,n}$ or $\omega^k_{E,n cusp}$ respectively.   Let $C$ be a non archimedean field extension of $K^{cyc}$ and $\n{O}_C$ its valuation ring.  Let $\f{X}_{\infty,C}$ be the extension of scalars of the integral modular curve to $\n{O}_C$.   Corollary \ref{CoroHiTorsionfree} says that  the cohomology groups $\mathrm{H}^i(\f{X}_{\infty}, \n{F})$ are torsion free,  $p$-adically complete and separated.  In particular,  we can endow $\mathrm{H}^i( \f{X}_{\infty, C}, \n{F})[\frac{1}{p}]$ with an structure of $C$-Banach space with unit ball $\mathrm{H}^i(\f{X}_{\infty,C}, \n{F})$.  The local duality theorem extends  to infinite level in the following way

\begin{theo}
\label{TheoDualityinfinite}
Let $\n{F}$ and $\n{F}_n$ be as above, and let $\n{F}_n^\vee= \s{H}om_{\s{O}_{X_n}}(\n{F}_n, \s{O}_{X_n})$ be the dual sheaf of $\n{F}_n$.   There is a $\GL_2(\bb{Q}_p)$-equivariant isomorphism of topological $\n{O}_C$-modules 
\begin{equation}
\label{eqInftyduality}
 \Hom_{\n{O}_C}( \mathrm{H}^i (\f{X}_{\infty,C},  \n{F}), \n{O}_C ) \cong \varprojlim_{n, \widetilde{\Tr}_n} \mathrm{H}^{1-i}(X_{n,\n{O}_C},  \n{F}_n^{\vee} \otimes \omega^2_{E,n,cusp}).
\end{equation}
The LHS is endowed with the  weak topology,  the RHS is endowed with the inverse limit topology,  $\widetilde{\Tr}_n$ are the normalized traces of Proposition \ref{isoDualizingModforms}, and the extension of scalars is given by $X_{n,\n{O}_C}=X_{n}\times_{\Spec  \n{O}_n} \Spec \n{O}_C$. 
\end{theo}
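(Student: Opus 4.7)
The plan is to pass to the inverse limit in the finite-level Serre--Pontryagin pairing of Corollary \ref{localDualcoro}, using Proposition \ref{isoDualizingModforms} to identify the maps dual to pullback in the tower with the normalized traces $\widetilde{\Tr}_n$. Write $M = \mathrm{H}^i(\f{X}_{\infty,C}, \n{F})$. First I would observe, using Corollary \ref{CoroHiTorsionfree}, that $M$ is a torsion-free, $p$-adically complete and separated $\n{O}_C$-module; since every map $M/p^s \to K/\n{O}_C$ factors through $(K/\n{O}_C)[p^s] = \n{O}_C/p^s$, this gives
\[
\Hom_{\n{O}_C}(M, \n{O}_C) = \varprojlim_s \Hom_{\n{O}_C}(M/p^s, \n{O}_C/p^s) = \Hom_{\n{O}_C}\!\bigl(M \otimes_{\n{O}_C} K/\n{O}_C,\, K/\n{O}_C\bigr),
\]
with the weak topology on the left matching the natural inverse-limit topology on the right.

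Next, Theorem \ref{corVanishingCohPerfectoid}(2) and Corollary \ref{CoroHiTorsionfree} provide the identifications
\[
M \otimes_{\n{O}_C} K/\n{O}_C = \varinjlim_s \mathrm{H}^i(\f{X}_{\infty,C}, \n{F}/p^s) = \varinjlim_n \mathrm{H}^i(X_{n,\n{O}_C}, \n{F}_n \otimes_{\n{O}_C} K/\n{O}_C),
\]
and applying $\Hom_{\n{O}_C}(-, K/\n{O}_C)$ converts this colimit into an inverse limit. The finite-level perfect pairing of Corollary \ref{localDualcoro} and Remark \ref{Remarkduality} (extended to $\n{O}_C$ by flat base change, cf.\ Remark \ref{remark1}) identifies each factor with $\mathrm{H}^{1-i}(X_{n,\n{O}_C}, \n{F}_n^\vee \otimes \omega^2_{E,n,cusp})$. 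The transition maps in the colimit are induced by the canonical maps $p_n^* \n{F}_{n-1} \to \n{F}_n$ (an equality for $\n{F} = \omega^k_{E,\infty}$, and the inclusion $\s{O}_{X_n}(-pD_n) \hookrightarrow \s{O}_{X_n}(-D_n)$ in the cuspidal case). By functoriality of Grothendieck duality, their duals are induced by the Grothendieck trace $p_{n,*}\omega^\circ_n \to \omega^\circ_{n-1}$ tensored with the canonical map on duals of $\n{F}$, and Proposition \ref{isoDualizingModforms} was engineered precisely so that, under $\omega^\circ_n \cong \omega^2_{E,n,cusp}$, this Grothendieck trace becomes $\widetilde{\Tr}_n$, with the twist by $\s{O}(D_{n-1}-D_n)$ absorbed into the same identification. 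The $\GL_2(\bb{Q}_p)$-equivariance of the entire chain is manifest because every step is functorial along the modular tower.

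The main obstacle will be this last identification. Abstract Grothendieck duality provides functoriality of the trace, but one must chase through the construction of Proposition \ref{isoDualizingModforms} to verify that, after tensoring with $\n{F}_n^\vee$ and base changing to $\n{O}_C$, the Grothendieck trace on $\omega^\circ$ really specializes to $\widetilde{\Tr}_n$ on $\omega^2_{E,n,cusp}$, and that the cuspidal divisor shifts cancel consistently at every level of the tower. Once this compatibility is in hand, composing the displayed isomorphisms yields the desired $\GL_2(\bb{Q}_p)$-equivariant topological identification.
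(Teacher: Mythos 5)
Your proposal follows essentially the same route as the paper's proof: use the torsion-freeness and completeness of $\mathrm{H}^i(\f{X}_{\infty,C},\n{F})$ (Corollary \ref{CoroHiTorsionfree}) to rewrite its $\n{O}_C$-dual as the Pontryagin dual of $\varinjlim_n \mathrm{H}^i(X_{n,\n{O}_C}, \n{F}_n\otimes K/\n{O})$, then dualize term by term via the finite-level Serre--Pontryagin pairing and identify the resulting transition maps with the normalized traces via Proposition \ref{isoDualizingModforms}. The one place where ``manifest by functoriality'' is a bit quick is the $\GL_2(\bb{Q}_p)$-equivariance: the action is $\psi$-semi-linear and the action on the inverse limit is \emph{defined} via the commutative ladder (\ref{equationTracePullback}) so that the pairing is equivariant, rather than being an a priori given action against which one checks equivariance.
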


\begin{remark}
\label{RemarkDualityinfty}
\begin{enumerate}

\item   We could restate the previous theorem using $\omega^{\circ}_{n}= \pi_n^! \n{O}_n$ instead of $\omega^2_{E,n,cusp}$,   the trace $\widetilde{\Tr}_n$ would be replaced by the Serre duality trace relative to the morphism $X_{n+1, \n{O}_C}\rightarrow X_{n,\n{O}_C}$.  Note that even though the ring $\n{O}_C$ is not noetherian, all the objects involved are defined as pullbacks of objects which live over a finite extension of $\bb{Z}_p$, see Remark \ref{remark1}.   

\item Let $\n{F}^+_{\eta}= \n{F}\widehat{\otimes}_{\s{O}_{\f{X}_{\infty}}}  \s{O}^+_{\n{X}_{\infty}}$ be the pullback of $\n{F}$ to  $\n{X}_{\infty}$,  denote $\n{F}_{\eta}= \n{F}^+_{\eta}[\frac{1}{p}]$.  By Corollary \ref{CoroCoherentPerfCurve} we know that 
\[
\mathrm{H}^i(\n{X}_{\infty},  \n{F}_{\eta}) = \mathrm{H}^i(\f{X}_{\infty}, \n{F})[\frac{1}{p}].
\]
Thus, $\mathrm{H}^i(\n{X}_{\infty,C},  \n{F}_{\eta}) $ can be endowed with an structure of  $C$-Banach space.  Its dual is given by 
\[
\mathrm{H}^i(\n{X}_{\infty, C},  \n{F}_{\eta})^* = (\varprojlim_{m, \widetilde{\Tr}_n} \mathrm{H}^{1-i}(X_{n,\n{O}_C}, \n{F}_n^\vee \otimes \omega^2_{E,n,cusp}) )[\frac{1}{p}].
\]

\item Let $R_n:  \bb{Z}_p[\zeta_{N}]^{cyc}\rightarrow \bb{Z}_p[\zeta_{Np^n}]$ denote the $n$-th normalized Tate trace, and let $X'_n$ be the connected component of $X(N,p^n)_{\bb{Z}_p[\zeta_{N}]}$ corresponding to $\zeta_N$.   There is a natural  injective map 
\[
\varprojlim_{m , \widetilde{\Tr}_n} \mathrm{H}^{1-i}(X'_{n,\bb{Z}_p[\zeta_N]^{cyc}}, \n{F}_n^\vee \otimes \omega^2_{E,n,cusp}) \rightarrow \varprojlim_{n,  R_n\circ \widetilde{\Tr}_n}   \mathrm{H}^{1-i}(X'_{n}, \n{F}_n^\vee \otimes \omega^2_{E,n,cusp}).
\]
However,  this map is not surjective in general;  the RHS is profinite while the LHS is not compact.

\end{enumerate}

\end{remark}

Before proving Theorem \ref{TheoDualityinfinite} let us say some words   about the inverse limit of (\ref{eqInftyduality}),  it can be described as the kernel of the map 
\[
\prod_{n}  \mathrm{H}^{1-i}(X_{n,\n{O}_C},  \n{F}_n^{\vee} \otimes \omega^2_{E,n,cusp}) \xrightarrow{1- \widetilde{\Tr}_n} \prod_{n}  \mathrm{H}^{1-i}(X_{n,\n{O}_C},  \n{F}_n^{\vee} \otimes \omega^2_{E,n,cusp}).
\]
  Moreover,  the Corollary \ref{CoroHiTorsionfree} says that the factors in the products are $p$-adically complete,  separated and torsion free.   The following lemma implies that the inverse limit is always $p$-adically complete and separated 

\begin{lem}
Let $N$,  $M$ be torsion free,  $p$-adically complete and separated $\bb{Z}_p$-modules, and    $f:N \rightarrow M$  a $\bb{Z}_p$-linear  map.  Then $\ker f$ is torsion free,  $p$-adically complete and separated. 
\proof
It is clear that $\ker f$ is torsion free.   The map $f$ is continuous for the $p$-adic topology, in particular $\ker f \subset N$ is a closed sub-module.  Since $M$ is torsion free,  one has that $\ker f\cap  p^sN = p^s \ker f $ for all $s\geq 1$. Then, 
\[
\ker f = \varprojlim_s (\ker f/ (\ker f\cap p^s N))=  \varprojlim_s \ker f/p^s \ker f 
\]
proving the lemma.  \endproof
\end{lem}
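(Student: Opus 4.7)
The plan is straightforward and the proof essentially amounts to one observation plus a standard inverse-limit manipulation, exactly as indicated in the sketch appearing in the excerpt. I would organize the argument in three short steps.

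First, torsion-freeness of $\ker f$ is immediate: any submodule of the torsion-free module $N$ is torsion-free, so there is nothing to do for this part.

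Second, I would verify that $\ker f$ is closed in $N$ for the $p$-adic topology. Since $f$ is $\bb{Z}_p$-linear, one has $f(p^s N) \subseteq p^s M$, so $f$ is continuous when both source and target are endowed with their $p$-adic topologies. As $M$ is $p$-adically separated, the singleton $\{0\}\subseteq M$ is closed, and therefore $\ker f = f^{-1}(0)$ is closed in $N$.

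The main (and really only) technical point is to identify the induced subspace topology on $\ker f$ with its own $p$-adic topology, i.e.\ to show the equality of submodules
\[
\ker f \cap p^s N = p^s \ker f \quad \text{for every } s\geq 1.
\]
The inclusion $\supseteq$ is obvious. For $\subseteq$, if $x\in \ker f$ is of the form $x=p^s y$ with $y\in N$, then $0=f(x)=p^s f(y)$ in $M$; since $M$ is $p$-torsion-free this forces $f(y)=0$, hence $y\in\ker f$, giving $x\in p^s\ker f$. This is the only place where torsion-freeness of $M$ is used, and it is the key input.

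With this identification in hand, I would conclude as in the excerpt: using that $\ker f$ is closed in the complete, separated module $N$, one has $\ker f = \varprojlim_s \ker f/(\ker f \cap p^s N)$, and the above identity rewrites this as $\ker f = \varprojlim_s \ker f / p^s \ker f$, proving $p$-adic completeness and separation. There is no real obstacle here; the only thing to keep in mind is to invoke the torsion-freeness of $M$ at the right moment, since without it the intersection $\ker f \cap p^s N$ could be strictly larger than $p^s\ker f$ and the subspace topology would not agree with the $p$-adic topology on $\ker f$.
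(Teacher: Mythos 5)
Your proof is correct and follows the same route as the paper: torsion-freeness of $\ker f$ is inherited from $N$, continuity of $f$ plus separatedness of $M$ gives closedness of $\ker f$, torsion-freeness of $M$ yields $\ker f\cap p^sN=p^s\ker f$, and the inverse-limit identification finishes. You merely spell out the intermediate justifications (closedness and the intersection equality) that the paper leaves implicit.
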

 
Next, we recall the $\GL_2(\bb{Q}_p)$-action in both sides of (\ref{eqInftyduality}).  Without loss of generality we take $C=K^{cyc}$.   Let $ \chi:  \Gal(\n{O}^{cyc}/ \n{O}) \xrightarrow{\sim} \bb{Z}_p^{\times} $ be the cyclotomic character.  We define $\psi:  \GL_2(\bb{Q}_p)\rightarrow \Gal(\n{O}^{cyc}/ \n{O})$ to  be $\psi(g)= \chi^{-1} (p^{-v_p(\det g)} \det g)$, where $v_p:  \bb{Q}_p^*\rightarrow \bb{Z}$ denotes the $p$-adic valuation.  Fix $g\in \GL_2(\bb{Q}_p)$ and $n\geq 0$.  Let $m\geq 1$ be such that $\Gamma(p^m)\subset \Gamma(p^n)\cap g \Gamma(p^n)g^{-1}$,  write $c_g:  \GL_2(\bb{Q}_p)\rightarrow \GL_2( \bb{Q}_p)$ for the conjugation $x\mapsto g x g^{-1}$.  We denote by  $X(Np^n)_{c(g)}$ be the modular curve of level  $\Gamma(N)\cap \Gamma(p^n)\cap g \Gamma(p^n) g^{-1}$,  let $X_{n,c(g)}$ be the locus where the Weil pairing of the universal basis of $E[N]$ is equal to $\zeta_N\in \breve{\bb{Z}}_p$.      We let $\omega^{\circ}$ be the dualizing sheaf of $X_{n,c(g)}$,  i.e. the  exceptional inverse image of $\n{O}_{n,c(g)}:= \mathrm{H}^0(X_{n,c(g)}, \s{O}_{X_{n,c(g)}})$ over $X_{n,c_{g}}$.  

 The maps 
\[
\Gamma(p^m)\hookrightarrow \Gamma(p^n)\cap g\Gamma(p^n)g^{-1}\xrightarrow{c_{g^{-1}}} g^{-1}\Gamma(p^n)g \cap \Gamma(p^n)\hookrightarrow \Gamma(p^n)
\]
induce maps of modular curves 
\[
X_m \xrightarrow{q_1} X_{n,c(g)} \xrightarrow{g} X_{n,c(g^{-1})} \xrightarrow{q_2} X_{n},
\]
with $g$ an isomorphism.   Notice that  the modular sheaves $\omega^k_E$ are preserved by the pullbacks of   $q_1$, $q_2$ and $g$.   Let $\n{F}$ and $\n{F}_n$ be  as in Theorem \ref{TheoDualityinfinite},   we have induced maps of cohomology 
\[
\R\Gamma(X_n, \n{F}_n/p^s) \xrightarrow{   q_1^* \circ g^*\circ q_2^*} \R\Gamma(X_{m}, \n{F}_m/p^s).
\]
Taking direct limits we obtain a map
\[
\R\Gamma(\f{X}_{\infty},  \n{F}/p^s)\xrightarrow{g^*} \R\Gamma(\f{X}_{\infty},  \n{F}/p^s).
\]
Finally,  taking derived inverse limits one gets  the action of $g\in \GL_2(\bb{Q}_p)$ on the cohomology  $\R\Gamma(\f{X}_{\infty}, \n{F})$.   

The action of $\GL_2(\bb{Q}_p)$ on cohomology is not $\n{O}^{cyc}$-linear.  In fact,   it is $\psi$-semi-linear;  this can be shown by considering the Cartan decomposition \[\GL_2(\bb{Q}_p)= \bigsqcup_{\substack{n_1 \geq n_2 }}  \GL_2(\bb{Z}_p) \left( \begin{matrix} p^{n_1} & 0 \\ 0 & p^{n_2} \end{matrix} \right)  \GL_2(\bb{Z}_p) \] and using the compatibility of the Weil pairing with the determinant.    

The action of $\GL_2(\bb{Q}_p)$ on  $ \varprojlim_{n, \widetilde{\Tr}_n} \mathrm{H}^{1-i}(X_{n,\n{O}^{cyc}},  \n{F}_n^{\vee} \otimes \omega^2_{E,n,cusp})$ is defined in such a way that the isomorphism (\ref{eqInftyduality}) is equivariant.   Namely, there is a commutative diagram of local duality pairings provided by the functoriality of Serre duality 
\begin{equation}
\label{equationTracePullback}
\begin{tikzcd}[column sep=8pt]
\mathrm{H}^{1-i}(X_{m, \n{O}^{cyc}}, \n{F}_m^\vee\otimes \omega^2_{E,m,cusp} )\times \mathrm{H}^i(X_{m,\n{O}^{cyc}}, \n{F}_m \otimes  K/ \n{O} ) \ar[r]  \ar[d, shift right=1.5cm, "\widetilde{\Tr}_{q_1}"]& K^{cyc}/\n{O}^{cyc}  \ar[d, equal]\\
\mathrm{H}^{1-i}(X_{n,c(g), \n{O}^{cyc}},  (g\circ q_2)^* \n{F}_n^\vee \otimes \omega^\circ)\times \mathrm{H}^i(X_{n,c(g), \n{O}^{cyc}},  (g\circ q_2)^*\n{F}_n \otimes  K/ \n{O}  )\ar[r] \ar[d, shift right=1.5cm, "{g^{-1}}^*"]  \ar[u, shift right =1.5cm, "q_1^*"]& K^{cyc}/ \n{O}^{cyc} \\
\mathrm{H}^{1-i}(X_{n,c(g^{-1}), \n{O}^{cyc}},   q_2^*\n{F}_n^\vee \otimes \omega^\circ )\times \mathrm{H}^i(X_{n,c(g^{-1}), \n{O}^{cyc}},  q_2^* \n{F} \otimes  K/ \n{O}  ) \ar[r] \ar[d, shift right=1.5cm, "\widetilde{\Tr}_{q_2}"] \ar[u, shift right =1.5cm, "g^*"] & K^{cyc}/ \n{O}^{cyc}  \ar[d, equal] \ar[u, "\psi(g)"] \\
\mathrm{H}^{1-i}(X_{n,\n{O}^{cyc}},  \n{F}_n^\vee \otimes \omega^2_{E,n,cusp})\times \mathrm{H}^i(X_{n, \n{O}^{cyc}}, \n{F}_n\otimes K/ \n{O} ) \ar[r] \ar[u, shift right =1.5cm, "q_2^*"]& K^{cyc}/\n{O}^{cyc}.
\end{tikzcd}
\end{equation}
The  maps $\widetilde{\Tr}_{q_1}$ and $\widetilde{\Tr}_{q_2}$ are induced by the Serre duality traces of $q_1$ and $q_2$ respectively,  cf.  Remark \ref{RemarkDualityinfty} (1).  Thus, the right action of $g\in \GL_2(\bb{Q}_p)$ on a tuple $\underline{f}=(f_n)\in \varprojlim_{n, \widetilde{\Tr}_n} \mathrm{H}^{1-i}(X_{n,\n{O}^{cyc}},  \n{F}_n^\vee \otimes \omega^2_{E,n,cusp})$   is given by $\underline{f}|_{g}= ((\underline{f}|_g)_n )_{n\in \bb{N}}$, where  
\begin{eqnarray*}
(\underline{f}|_g)_n  & = &  \widetilde{\Tr}_{q_2}\circ {g^{-1}}^*\circ \widetilde{\Tr}_{q_1}(f_m)
\end{eqnarray*}
for $m$ big enough, and  $q_1$, $q_2$   as in  (\ref{equationTracePullback}). 
  
\begin{proof}[Proof of Theorem \ref{TheoDualityinfinite}]

Without loss of generality we take  $C=K^{cyc}$.  Let $\n{F}=\omega_{E,\infty}^k$ or $\omega^{k}_{E,\infty, cusp}$.   By Corollary \ref{CoroHiTorsionfree} we have  
\[
\mathrm{H}^i(\f{X}_{\infty}, \n{F})\otimes (K/ \n{O}) = \mathrm{H}^i( \f{X}_{\infty},  \n{F} \otimes  K/ \n{O} ). 
\]
Therefore
\begin{eqnarray*}
\Hom_{\n{O}^{cyc}}(\mathrm{H}^i(\f{X}_{\infty}, \n{F}),  \n{O}^{cyc}) & = & \Hom_{\n{O}^{cyc}}( \mathrm{H}^i(\f{X}_{\infty}, \n{F})\otimes K/ \n{O} , K^{cyc}/\n{O}^{cyc}) \\ 
 							 &  = &  \Hom_{\n{O}^{cyc}}( \mathrm{H}^i(\f{X}_{\infty}, \n{F}\otimes K/ \n{O} ) , K^{cyc}/\n{O}^{cyc}).
\end{eqnarray*}  
 On the other hand,  we have 
 \[
 \mathrm{H}^i( \f{X}_{\infty},  \n{F}\otimes  K/ \n{O} ) = \varinjlim_{n,p_n^*} \mathrm{H}^i(X_{n,\n{O}^{cyc}},  \n{F}_n\otimes   K/ \n{O} ) 
 \]
where the transition maps are given by pullbacks.  By local duality,  Proposition \ref{LocalDualityCurves}, we have a natural isomorphism 
\begin{eqnarray*}
\Hom_{\n{O}^{cyc}}(\mathrm{H}^i(\f{X}_{\infty}, \n{F}),  \n{O}^{cyc}) & = & \varprojlim_{n, p_n^*} \Hom_{\n{O}^{cyc}}(\mathrm{H}^i(X_{n,\n{O}^{cyc}}, \n{F}_n \otimes \bb{Q}_p / \bb{Z}_p),  K^{cyc}/ \n{O}^{cyc}) \\
 					& = & \varprojlim_{n,  \widetilde{\Tr}_n}  \mathrm{H}^{1-i}(X_{n,\n{O}^{cyc}}, \n{F}_n^{\vee}\otimes \omega^2_{E,n,cusp}).
\end{eqnarray*}
The isomorphism is $\GL_2(\bb{Q}_p)$-equivariant by the diagram (\ref{equationTracePullback}). 
\end{proof}

We end this section with an application of the local duality theorem at infinite level to the completed cohomology.  We let $\n{X}_{n,proet}$ be the pro-\'etale site of the finite level modular curve as in  \S 3 of  \cite{MR3090230},    and $\n{X}_{\infty,  proet}$  the pro-\'etale site  of the perfectoid modular curve as in  Lecture 8 of  \cite{ScholzeWeinspadicgeometry}. 

\begin{definition}
Let $i\geq 0$.  The \textit{$i$-th completed cohomology group} of the modular tower $\{\n{X}_n\}_{n\geq 0}$ is defined as 
\[
\widetilde{\mathrm{H}}^i:= \varprojlim_{s} \varinjlim_{n} \mathrm{H}_{et}^i( \n{X}_{n,\bb{C}_p}, \bb{Z}/p^s \bb{Z} ). 
\]
\end{definition}
\begin{remark}
The previous definition of completed cohomology is slightly different from the one of  \cite{MR2207783}.  Indeed, Emerton consider the \'etale cohomology with compact support of the affine modular curve $Y_n$. Let  $j: Y_n\rightarrow X_n$ be the inclusion and $\iota: D_n\rightarrow X_n$ be the cusp divisor,  both constructions are related by taking the  cohomology of the  short exact sequence 
\[
0 \rightarrow  j_!(\bb{Z}/p^s \bb{Z}) \rightarrow   \bb{Z}/p^s \bb{Z} \rightarrow \iota_* \iota^* \bb{Z}/p^s \bb{Z}\rightarrow 0. 
\]
Moreover, the cohomology at the cusps can be explicitely computed,  and many  interesting cohomology classes already appear in $\widetilde{\mathrm{H}}^1$. 
\end{remark}

We recall some important completed sheaves in the pro-\'etale site.  Let $\n{W}$ denote $\n{X}_n$ or $\n{X}_{\infty}$
\begin{itemize}
\item[•] We denote $\widehat{\bb{Z}}_p = \varprojlim_s \bb{Z}/p^s\bb{Z}$, the $p$-adic completion over $\n{W}_{proet}$ of the locally constant sheaf $\bb{Z}$.   

\item[•] Let $\widehat{\s{O}}^+_{\n{W}}= \varprojlim_{s} \s{O}_{\n{W}}^+/p^s$ be the $p$-adic completion of the structural sheaf of bounded functions over $\n{W}_{proet}$.

\end{itemize}
By Lemma 3.18 of \cite{MR3090230} the sheaf  $\widehat{\s{O}}_{\n{W}}^+$ is   the derived inverse limit of the projective system  $\{\s{O}_{\n{W}}^+/p^s\}_s$.  On the other hand,  the repleteness of the pro\'etale site and Proposition 3.1.10 of \cite{bhatt2014proetale} implies that $\widehat{\bb{Z}}_p$ is also the derived inverse limit of $\{\bb{Z}/p^s\bb{Z}\}_{s}$.   We have the following proposition 

\begin{prop}
\label{PropCompletedCohAndPerf}
Let $i\geq 0$,   there is a short exact sequence 
\[
0\rightarrow \R^1 \varprojlim_s \mathrm{H}^{i-1}_{et}(\n{X}_{\infty, \bb{C}_p},  \bb{Z}/p^s \bb{Z}) \rightarrow  \mathrm{H}^i_{proet}(\n{X}_{\infty,  \bb{C}_p}, \widehat{\bb{Z}}_p) \rightarrow \widetilde{\mathrm{H}}^i\rightarrow 0.
\]
\proof
As $\widehat{\bb{Z}}_p = \R \varprojlim_s \bb{Z}/p^s \bb{Z}$,  the Grothendieck spectral sequence for derived limits gives short exact sequences  for $i\geq 0$
\[
0\rightarrow \R^1 \varprojlim_s \mathrm{H}^{i-1}_{proet}(\n{X}_{\infty, \bb{C}_p},  \bb{Z}/p^s \bb{Z}) \rightarrow  \mathrm{H}^i_{proet}(\n{X}_{\infty,  \bb{C}_p}, \widehat{\bb{Z}}_p) \rightarrow   \varprojlim_s \mathrm{H}^{i}_{proet}(\n{X}_{\infty, \bb{C}_p},  \bb{Z}/p^s \bb{Z})  \rightarrow 0.
\]
Lemma 3.16 of \cite{MR3090230} implies that $\mathrm{H}^{i}_{proet}(\n{X}_{\infty, \bb{C}_p},  \bb{Z}/p^s \bb{Z}) =\mathrm{H}^{i}_{et}(\n{X}_{\infty, \bb{C}_p},  \bb{Z}/p^s \bb{Z}) $.   On the other hand,    Corollary 7.18 of \cite{MR3090258} gives an isomorphism
\[
\mathrm{H}_{et}^i(\n{X}_{\infty, \bb{C}_p}, \bb{Z}/p^s\bb{Z})= \varinjlim_{n} \mathrm{H}_{et}^i(\n{X}_{n,  \bb{C}_p}, \bb{Z}/p^s\bb{Z}),
\]
the proposition follows. 
\endproof
\end{prop}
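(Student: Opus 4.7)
The plan is to realize $\widehat{\bb{Z}}_p$ as the derived inverse limit of the constant sheaves $\bb{Z}/p^s\bb{Z}$ on the pro-\'etale site, and then apply the Grothendieck (Milnor) short exact sequence relating the cohomology of a derived limit with the derived limit of cohomologies. First, I would invoke the fact that the pro-\'etale site $\n{X}_{\infty,\bb{C}_p,proet}$ is replete in the sense of Bhatt--Scholze, so that by Proposition 3.1.10 of \cite{bhatt2014proetale} we have a quasi-isomorphism $\widehat{\bb{Z}}_p \simeq \R\varprojlim_s \bb{Z}/p^s\bb{Z}$; this ensures that nothing is lost when passing to the derived limit, which is the crucial input.

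Next I would apply $\R\Gamma_{proet}(\n{X}_{\infty,\bb{C}_p},-)$ to both sides. Since $\R\Gamma$ commutes with $\R\varprojlim$, we obtain
\[
\R\Gamma_{proet}(\n{X}_{\infty,\bb{C}_p},\widehat{\bb{Z}}_p) \simeq \R\varprojlim_s \R\Gamma_{proet}(\n{X}_{\infty,\bb{C}_p},\bb{Z}/p^s\bb{Z}).
\]
Taking cohomology and using the standard spectral sequence for $\R\varprojlim$ of a tower of complexes (which, for an index set $\bb{N}$, collapses to Milnor-style short exact sequences), one extracts
\[
0 \to \R^1\varprojlim_s \mathrm{H}^{i-1}_{proet}(\n{X}_{\infty,\bb{C}_p},\bb{Z}/p^s\bb{Z}) \to \mathrm{H}^i_{proet}(\n{X}_{\infty,\bb{C}_p},\widehat{\bb{Z}}_p) \to \varprojlim_s \mathrm{H}^i_{proet}(\n{X}_{\infty,\bb{C}_p},\bb{Z}/p^s\bb{Z}) \to 0.
\]

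To finish, I would identify the pro-\'etale cohomology groups $\mathrm{H}^i_{proet}(\n{X}_{\infty,\bb{C}_p},\bb{Z}/p^s\bb{Z})$ with the \'etale ones. For torsion sheaves this is Lemma 3.16 of \cite{MR3090230}, which states that the natural morphism from the \'etale to the pro-\'etale topos induces isomorphisms on cohomology with locally constant constructible torsion coefficients. Then the tilde-limit property $\n{X}_{\infty}\sim \varprojlim_n \n{X}_n$ combined with Corollary 7.18 of \cite{MR3090258} (Scholze's compatibility of \'etale cohomology with inverse limits of affinoid perfectoid spaces) yields
\[
\mathrm{H}^i_{et}(\n{X}_{\infty,\bb{C}_p},\bb{Z}/p^s\bb{Z}) = \varinjlim_n \mathrm{H}^i_{et}(\n{X}_{n,\bb{C}_p},\bb{Z}/p^s\bb{Z}).
\]
Substituting this into the short exact sequence above gives exactly the claimed statement, since by definition $\widetilde{\mathrm{H}}^i = \varprojlim_s \varinjlim_n \mathrm{H}^i_{et}(\n{X}_{n,\bb{C}_p},\bb{Z}/p^s\bb{Z})$.

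The main point to be careful about is the interchange of $\R\Gamma$ with $\R\varprojlim$, which is formal in a replete topos but genuinely requires the repleteness of the pro-\'etale site; this is why one cannot naively work on the \'etale site from the start. The rest is a routine concatenation of cited results, and no further estimates or geometric input about the modular curve are needed beyond the tilde-limit identification $\n{X}_{\infty}\sim \varprojlim_n \n{X}_n$.
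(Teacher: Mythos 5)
Your proposal matches the paper's proof almost line for line: both realize $\widehat{\bb{Z}}_p$ as $\R\varprojlim_s \bb{Z}/p^s\bb{Z}$ (the paper also cites repleteness and Proposition 3.1.10 of \cite{bhatt2014proetale} in the paragraph preceding the statement), extract the Milnor/Grothendieck short exact sequence for derived limits, identify pro-\'etale with \'etale cohomology via Lemma 3.16 of \cite{MR3090230}, and pass to the limit over $n$ via Corollary 7.18 of \cite{MR3090258}. The argument is correct and is essentially the same as the one in the paper.
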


Next, we relate the completed cohomologies $\widetilde{\mathrm{H}}^i$ with  the coherent cohomology of $\n{X}_{\infty}$ via the Primitive Comparison Theorem.  This strategy is the same  as the one  presented by Scholze in Chapter  IV of   \cite{scholzeTorsCoho2015}  for Emerton's completed cohomology.   In the following we work with the almost-setting with respect to the maximal ideal of $\n{O}_{\bb{C}_p}$

\begin{prop}[{\cite[Theo.  IV.2.1]{scholzeTorsCoho2015}}]
\label{PropCompletedCoherent}
There are natural almost isomorphisms
\begin{equation}
\label{eqIsopcoho}
\widetilde{\mathrm{H}}^i \widehat{\otimes }_{\bb{Z}_p} \n{O}_{\bb{C}_p} =^{ae} \mathrm{H}^i_{proet}(\n{X}_{\infty, \bb{C}_p} , \widehat{\s{O}}^+_{\n{X}_{\infty}}) =^{ae}  \mathrm{H}^i(\f{X}_{\infty}, \s{O}_{\f{X}_{\infty}}) \widehat{\otimes}_{\n{O}^{cyc}} \n{O}_{\bb{C}_p}.
\end{equation}
In particular,  $\widetilde{\mathrm{H}}^i=0$ for $i\geq 2$,   the $\R^1 \varprojlim_s$ of Proposition \ref{PropCompletedCohAndPerf} vanishes, and the $\widetilde{\mathrm{H}}^i$ are torsion free, $p$-adically complete and separated.  
\proof
By the Primitive Comparison Theorem \cite[Theo.  5.1]{MR3090230},  there are almost quasi-isomorphisms  for all $n,s,i\in \bb{N}$
\[ 
\mathrm{H}^i_{et}(\n{X}_{n,\bb{C}_p}, \bb{Z}/p^s \bb{Z})\otimes_{\bb{Z}_p} \n{O}_{\bb{C}_p} =^{ae} \mathrm{H}^i_{et}(\n{X}_{n,\bb{C}_p},  \s{O}^+_{\n{X}_n} /p^s ).
\]
Taking direct limits on $n$, and using Corollary 7.18 of  \cite{MR3090258}  one gets 
\begin{equation}
\label{eqderivedCompletedOhat1}
\mathrm{H}^i_{et}(\n{X}_{\infty, \bb{C}_p}, \bb{Z}/p^s \bb{Z}_p) \otimes_{\bb{Z}_p} \n{O}_{\bb{C}_p} =^{ae} \mathrm{H}^i_{et}(\n{X}_{\infty,  \bb{C}_p}, \s{O}_{\n{X}_{\infty}}^+/p^s).
\end{equation}
Namely,  we have $\s{O}^+_{\n{X}_{\infty}}/p^s= \varinjlim_{n} \s{O}^+_{\n{X}_n}/p^s$ as sheaves in the \'etale site of $\n{X}_{\infty}$. In fact,  let $U_{\infty}$ be  an affinoid perfectoid in the \'etale site of $\n{X}_{\infty}$ which factors as a composition of rational localizations and finite \'etale maps.   By Lemma 7.5 of \cite{MR3090258}  there exists $n_0\geq 0$ and an affinoid space $U_{n_0} \in \n{X}_{n,et}$ such that $U_{\infty}= \n{X}_{\infty}\times_{\n{X}_{n_0}} U_{n_0}$.   For $n\geq n_0$ denote  the pullback of $U_{n_0}$ to $\n{X}_{n,et}$ by $U_n$,  then $U_{\infty}\sim  \varprojlim_{n\geq n_0} U_{n}$ and $\s{O}^+(U_{\infty})/p^s=\varinjlim_{n\geq n_0} \s{O}^+(U_n)/p^s$.   

The sheaf $\s{O}^+_{\n{X}_{\infty}}/p^s$ is almost acyclic on affinoid perfectoids,  this implies that the RHS of (\ref{eqderivedCompletedOhat1}) is equal to  $\mathrm{H}^i_{an}(\n{X}_{\infty,\bb{C}_p}, \s{O}^+_{\n{X}_{\infty}}/p^s)$.  Then,   the proof of Corollary  \ref{CoroCoherentPerfCurve} allows us to compute the above complex using the formal model $\f{X}_{\infty}$
\begin{equation}
\label{eqcohintperft}
\mathrm{H}^i_{an}(\n{X}_{\infty, \bb{C}_p}, \s{O}^+_{\n{X}_{\infty}}/p^s) =^{ae} \mathrm{H}^i(\f{X}_{\infty}, \s{O}_{\f{X}_{\infty}}/p^s) \otimes_{\n{O}^{cyc}} \n{O}_{\bb{C}_p}. 
\end{equation}
The  Corollary \ref{CoroHiTorsionfree} shows that the inverse system $\{\mathrm{H}^i(\f{X}_{\infty},  \s{O}_{\f{X}_{\infty}}/p^s)\}_{s}$ satisfy the Mittag-Leffler condition.  As $\n{O}_{\bb{C}_p}/p^s$ is a faithfully flat $\bb{Z}/p^s\bb{Z}$-algebra,  the inverse system $\{\mathrm{H}^i_{et}(\n{X}_{\infty, \bb{C}_p}, \bb{Z}/p^s \bb{Z})\}_s$ also satisfies the Mittag-Leffler condition.  One  deduces from Proposition \ref{PropCompletedCohAndPerf} that 
\begin{equation}
\label{equationCompletedcoh}
\mathrm{H}^i_{proet}(\n{X}_{\infty, \bb{C}_p},  \widehat{\bb{Z}}_p)= \widetilde{\mathrm{H}}^i.
\end{equation}
We also obtain that $\widetilde{\mathrm{H}}^i/p^s= \mathrm{H}^i_{et}(\n{X}_{et,\bb{C}_p}, \bb{Z}/p^s \bb{Z})$ for all $i\in \bb{N}$.   Taking  inverse limits in (\ref{eqderivedCompletedOhat1}), and using (\ref{eqcohintperft}) and (\ref{equationCompletedcoh})  one obtains the corollary. 
\endproof
\end{prop}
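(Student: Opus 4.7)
The plan is to combine Scholze's Primitive Comparison Theorem at finite level with the integral model $\f{X}_{\infty}$ to bridge étale cohomology and coherent cohomology, then pass to the limit carefully using the Mittag-Leffler properties granted by Corollary \ref{CoroHiTorsionfree}. Concretely, I would proceed in four steps.

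First, for each $s,n\geq 0$, the Primitive Comparison Theorem of \cite{MR3090230} gives almost isomorphisms $\mathrm{H}^i_{et}(\n{X}_{n,\bb{C}_p}, \bb{Z}/p^s\bb{Z})\otimes_{\bb{Z}_p}\n{O}_{\bb{C}_p}=^{ae}\mathrm{H}^i_{et}(\n{X}_{n,\bb{C}_p}, \s{O}^+_{\n{X}_n}/p^s)$. Take the filtered colimit in $n$ and invoke the tilde-limit property $\n{X}_{\infty}\sim\varprojlim_n \n{X}_n$, together with Corollary 7.18 of \cite{MR3090258} on the left and the fact that $\s{O}^+_{\n{X}_{\infty}}/p^s=\varinjlim_n \s{O}^+_{\n{X}_n}/p^s$ as sheaves on $\n{X}_{\infty,et}$ on the right (this uses Lemma 7.5 of \cite{MR3090258} to reduce, on an affinoid perfectoid in $\n{X}_{\infty,et}$, to a tower of finite-level affinoids). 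This yields
\[
\mathrm{H}^i_{et}(\n{X}_{\infty,\bb{C}_p},\bb{Z}/p^s\bb{Z})\otimes_{\bb{Z}_p}\n{O}_{\bb{C}_p} =^{ae} \mathrm{H}^i_{et}(\n{X}_{\infty,\bb{C}_p},\s{O}^+_{\n{X}_{\infty}}/p^s).
\]

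Second, by Scholze's Almost Acyclicity Theorem the sheaf $\s{O}^+_{\n{X}_{\infty}}/p^s$ is almost acyclic on affinoid perfectoids, so the right-hand side coincides with the analytic cohomology $\mathrm{H}^i_{an}(\n{X}_{\infty,\bb{C}_p},\s{O}^+_{\n{X}_{\infty}}/p^s)$. By the same Čech-cover argument as in the proof of Corollary \ref{CoroCoherentPerfCurve}, applied to a cover of $\f{X}_{\infty}$ by formal affine perfectoids arising from finite level, this in turn is almost isomorphic to $\mathrm{H}^i(\f{X}_{\infty},\s{O}_{\f{X}_{\infty}}/p^s)\otimes_{\n{O}^{cyc}}\n{O}_{\bb{C}_p}$.

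Third, I pass to the inverse limit in $s$. Corollary \ref{CoroHiTorsionfree} tells us that $\mathrm{H}^i(\f{X}_{\infty},\s{O}_{\f{X}_{\infty}})$ is torsion-free and $p$-adically complete with $\mathrm{H}^i(\f{X}_{\infty},\s{O}_{\f{X}_{\infty}})/p^s=\mathrm{H}^i(\f{X}_{\infty},\s{O}_{\f{X}_{\infty}}/p^s)$; in particular the system $\{\mathrm{H}^i(\f{X}_{\infty},\s{O}_{\f{X}_{\infty}}/p^s)\}_s$ is Mittag-Leffler. Since $\n{O}_{\bb{C}_p}/p^s$ is faithfully flat over $\bb{Z}/p^s\bb{Z}$, the system $\{\mathrm{H}^i_{et}(\n{X}_{\infty,\bb{C}_p},\bb{Z}/p^s\bb{Z})\}_s$ is then also Mittag-Leffler. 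Consequently Proposition \ref{PropCompletedCohAndPerf} identifies $\mathrm{H}^i_{proet}(\n{X}_{\infty,\bb{C}_p},\widehat{\bb{Z}}_p)$ with $\widetilde{\mathrm{H}}^i$, and taking $\varprojlim_s$ in the almost isomorphisms of the previous paragraph produces the three-way almost identification of the statement. The vanishing in degrees $\geq 2$ then follows from Theorem \ref{corVanishingCohPerfectoid}, and $p$-torsion-freeness and completeness of $\widetilde{\mathrm{H}}^i$ are inherited from $\mathrm{H}^i(\f{X}_{\infty},\s{O}_{\f{X}_{\infty}})$ via faithful flatness of $\n{O}_{\bb{C}_p}$ over $\n{O}^{cyc}$.

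The main obstacle I anticipate is the bookkeeping in the second equality of (\ref{eqIsopcoho}): one must justify that the analytic cohomology of $\s{O}^+_{\n{X}_{\infty}}/p^s$ on the perfectoid generic fiber matches the coherent cohomology of $\s{O}_{\f{X}_{\infty}}/p^s$ on the integral perfectoid formal scheme. This requires choosing an affine cover of $\f{X}_{\infty}$ by formal perfectoids whose generic fibers are affinoid perfectoid and for which $\s{O}^+_{\n{X}_{\infty}}(\n{U}_{i,\eta})=\s{O}_{\f{X}_{\infty}}(\n{U}_i)$—exactly the content of Theorem \ref{theoexistperfint}—and then combining Čech-to-derived with almost acyclicity; secondarily, one must keep the Mittag-Leffler conditions straight through every limit so that neither an $\R^1\varprojlim$ nor a completed tensor product introduces a nontrivial correction.
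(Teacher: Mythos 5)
Your proposal follows the paper's own argument essentially step for step: Primitive Comparison at finite level, passage to the colimit over $n$ via Corollary 7.18 and Lemma 7.5 of \cite{MR3090258}, almost acyclicity and the \v{C}ech comparison with $\f{X}_{\infty}$ from Corollary \ref{CoroCoherentPerfCurve}, and finally the Mittag--Leffler control from Corollary \ref{CoroHiTorsionfree} together with faithful flatness of $\n{O}_{\bb{C}_p}/p^s$ over $\bb{Z}/p^s\bb{Z}$ to pass to $\varprojlim_s$ and invoke Proposition \ref{PropCompletedCohAndPerf}. The proof is correct and no different in substance from the paper's.
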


We obtain a description of the dual of the completed cohomology in terms of cuspidal modular forms of weight $2$: 

\begin{theo}
There is a $\GL_2(\bb{Q}_p)$-equivariant isomorphism of almost $\n{O}_{\bb{C}_p}$-modules 
\[
\Hom_{\n{O}_{\bb{C}_p}} (\mathrm{\widetilde{H}}^1 \widehat{\otimes}_{\bb{Z}_p}\n{O}_{\bb{C}_p} ,  \n{O}_{\bb{C}_p} ) =^{ae} \varprojlim_{n, \widetilde{\Tr}_n} \mathrm{H}^0(X_{n, \n{O}_{\bb{C}_p}},  \omega^2_{E,n,cusp}). 
\]
\proof
This is  a  consequence of Proposition  \ref{PropCompletedCoherent}  and the particular case of  Theorem \ref{TheoDualityinfinite} when  $\n{F}= \s{O}_{\f{X}_{\infty}}$ and $C=\bb{C}_p$.
\endproof
\end{theo}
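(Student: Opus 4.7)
The proof will be a direct combination of the two previously established results, specialized to the structural sheaf. The plan is to identify both sides of the desired isomorphism with pieces already computed in Proposition \ref{PropCompletedCoherent} and Theorem \ref{TheoDualityinfinite}.

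First, I would unwind the left-hand side. By Proposition \ref{PropCompletedCoherent} we have a $\GL_2(\bb{Q}_p)$-equivariant almost isomorphism
\[
\widetilde{\mathrm{H}}^1 \widehat{\otimes}_{\bb{Z}_p} \n{O}_{\bb{C}_p} =^{ae} \mathrm{H}^1(\f{X}_\infty, \s{O}_{\f{X}_\infty}) \widehat{\otimes}_{\n{O}^{cyc}} \n{O}_{\bb{C}_p}.
\]
Applying $\Hom_{\n{O}_{\bb{C}_p}}(-, \n{O}_{\bb{C}_p})$ and invoking the fact (from Corollary \ref{CoroHiTorsionfree}) that $\mathrm{H}^1(\f{X}_\infty, \s{O}_{\f{X}_\infty})$ is torsion-free and $p$-adically complete so that its completed base change to $\n{O}_{\bb{C}_p}$ coincides with $\mathrm{H}^1(\f{X}_{\infty, \bb{C}_p}, \s{O}_{\f{X}_{\infty, \bb{C}_p}})$, I would reduce the left-hand side to computing the $\n{O}_{\bb{C}_p}$-linear dual of the latter cohomology group.

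Second, I would apply Theorem \ref{TheoDualityinfinite} with $\n{F} = \s{O}_{\f{X}_\infty}$, $i = 1$, and $C = \bb{C}_p$. The sheaf $\n{F}_n = \s{O}_{X_n}$ is self-dual, so $\n{F}_n^\vee \otimes \omega^2_{E,n,cusp} = \omega^2_{E,n,cusp}$, and the theorem yields a $\GL_2(\bb{Q}_p)$-equivariant isomorphism
\[
\Hom_{\n{O}_{\bb{C}_p}}\bigl(\mathrm{H}^1(\f{X}_{\infty, \bb{C}_p}, \s{O}_{\f{X}_{\infty, \bb{C}_p}}),\, \n{O}_{\bb{C}_p}\bigr) \cong \varprojlim_{n, \widetilde{\Tr}_n} \mathrm{H}^0(X_{n, \n{O}_{\bb{C}_p}}, \omega^2_{E,n,cusp}).
\]
Chaining this with the previous identification produces the stated almost isomorphism.

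The only subtle point I would pay attention to is the compatibility of $\GL_2(\bb{Q}_p)$-actions: on the left this is built out of the Galois action on \'etale cohomology and the action on the modular tower (it is semilinear via the character $\psi$), while on the right it is assembled from the diagram (\ref{equationTracePullback}). Since both sides of Proposition \ref{PropCompletedCoherent} and both sides of Theorem \ref{TheoDualityinfinite} are $\GL_2(\bb{Q}_p)$-equivariant by construction, compatibility is automatic, but I would briefly record this. No further obstacle appears, since the flat base change $\n{O}^{cyc} \to \n{O}_{\bb{C}_p}$ commutes with both the formation of duals (the Hom modules involved are between finitely generated/completed modules over a valuation ring) and with inverse limits that satisfy Mittag-Leffler, which is the case here by Corollary \ref{CoroHiTorsionfree}.
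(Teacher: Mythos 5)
Your proof is correct and matches the paper's intended argument, which is simply to chain Proposition \ref{PropCompletedCoherent} with the $\n{F}=\s{O}_{\f{X}_{\infty}}$, $i=1$, $C=\bb{C}_p$ case of Theorem \ref{TheoDualityinfinite}, using self-duality of $\s{O}_{X_n}$; the bridge you spell out between $\mathrm{H}^1(\f{X}_\infty, \s{O}_{\f{X}_\infty}) \widehat{\otimes}_{\n{O}^{cyc}} \n{O}_{\bb{C}_p}$ and $\mathrm{H}^1(\f{X}_{\infty,\bb{C}_p}, \s{O}_{\f{X}_{\infty,\bb{C}_p}})$ via Corollary \ref{CoroHiTorsionfree} is exactly the (implicit) step the paper relies on.
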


\addcontentsline{toc}{section}{References}

\bibliographystyle{alpha}
\bibliography{intmodperf2}

\begin{thebibliography}{Mum08}

\bibitem[Bha17]{BhattLectures}
Bhargav Bhatt.
\newblock Lecture notes for a class on perfectoid spaces.
\newblock 2017.

\bibitem[BMS18]{MR3905467}
Bhargav Bhatt, Matthew Morrow, and Peter Scholze.
\newblock Integral {$p$}-adic {H}odge theory.
\newblock {\em Publ. Math. Inst. Hautes \'{E}tudes Sci.}, 128:219--397, 2018.

\bibitem[BS14]{bhatt2014proetale}
Bhargav Bhatt and Peter Scholze.
\newblock The pro-\'etale topology for schemes, 2014.

\bibitem[DR73]{deligne1973schemas}
Pierre Deligne and Michael Rapoport.
\newblock Les sch\'{e}mas de modules de courbes elliptiques.
\newblock pages 143--316. Lecture Notes in Math., Vol. 349, 1973.

\bibitem[Eme06]{MR2207783}
Matthew Emerton.
\newblock On the interpolation of systems of eigenvalues attached to
  automorphic {H}ecke eigenforms.
\newblock {\em Invent. Math.}, 164(1):1--84, 2006.

\bibitem[Fon13]{MR3087355}
Jean-Marc Fontaine.
\newblock Perfecto\"{\i}des, presque puret\'{e} et monodromie-poids (d'apr\`es
  {P}eter {S}cholze).
\newblock Number 352, Exp. No. 1057, pages 509--534. 2013.
\newblock S\'{e}minaire Bourbaki. Vol. 2011/2012. Expos\'{e}s 1043--1058.

\bibitem[Har66]{hartshorne1966residues}
Robin Hartshorne.
\newblock {\em Residues and duality}.
\newblock Lecture notes of a seminar on the work of A. Grothendieck, given at
  Harvard 1963/64. With an appendix by P. Deligne. Lecture Notes in
  Mathematics, No. 20. Springer-Verlag, Berlin-New York, 1966.

\bibitem[Heu19]{Heuer2019PerfectoidGO}
B.~Heuer.
\newblock Perfectoid geometry of p-adic modular forms.
\newblock 2019.

\bibitem[Heu20]{heuer2020cusps}
Ben Heuer.
\newblock Cusps and $q$-expansion principles for modular curves at infinite
  level, 2020.

\bibitem[Hub94]{MR1306024}
R.~Huber.
\newblock A generalization of formal schemes and rigid analytic varieties.
\newblock {\em Math. Z.}, 217(4):513--551, 1994.

\bibitem[Hub96]{MR1734903}
Roland Huber.
\newblock {\em \'{E}tale cohomology of rigid analytic varieties and adic
  spaces}.
\newblock Aspects of Mathematics, E30. Friedr. Vieweg \& Sohn, Braunschweig,
  1996.

\bibitem[Kat81]{katz1981serre}
Nicholas~M. Katz.
\newblock Serre-{T}ate local moduli.
\newblock In {\em Algebraic surfaces ({O}rsay, 1976--78)}, volume 868 of {\em
  Lecture Notes in Math.}, pages 138--202. Springer, Berlin-New York, 1981.

\bibitem[KL19]{kedlaya2016relative}
Kiran~S. Kedlaya and Ruochuan Liu.
\newblock Relative $p$-adic {H}odge theory, {I}{I}: Imperfect period rings,
  2019.

\bibitem[KM85]{katz1985arithmetic}
Nicholas~M. Katz and Barry Mazur.
\newblock {\em Arithmetic moduli of elliptic curves}, volume 108 of {\em Annals
  of Mathematics Studies}.
\newblock Princeton University Press, Princeton, NJ, 1985.

\bibitem[LT66]{MR238854}
Jonathan Lubin and John Tate.
\newblock Formal moduli for one-parameter formal {L}ie groups.
\newblock {\em Bull. Soc. Math. France}, 94:49--59, 1966.

\bibitem[Lur20]{JacobFullLevel2019}
Jacob Lurie.
\newblock Full level structures on elliptic curves.
\newblock \url{https://www.math.ias.edu/~lurie/papers/LevelStructures1.pdf},
  April 2020.

\bibitem[Mor17]{MorrowFoundPerfectoid}
Matthew Morrow.
\newblock Foundations of perfectoid spaces.
\newblock Notes for some talks in the Fargues-Fontaine curve study group at
  Harvard, 2017.

\bibitem[Mum08]{mumford1974abelian}
David Mumford.
\newblock {\em Abelian varieties}, volume~5 of {\em Tata Institute of
  Fundamental Research Studies in Mathematics}.
\newblock Published for the Tata Institute of Fundamental Research, Bombay; by
  Hindustan Book Agency, New Delhi, 2008.
\newblock With appendices by C. P. Ramanujam and Yuri Manin, Corrected reprint
  of the second (1974) edition.

\bibitem[Sch12]{MR3090258}
Peter Scholze.
\newblock Perfectoid spaces.
\newblock {\em Publ. Math. Inst. Hautes \'{E}tudes Sci.}, 116:245--313, 2012.

\bibitem[Sch13]{MR3090230}
Peter Scholze.
\newblock {$p$}-adic {H}odge theory for rigid-analytic varieties.
\newblock {\em Forum Math. Pi}, 1:e1, 77, 2013.

\bibitem[Sch15]{scholzeTorsCoho2015}
Peter Scholze.
\newblock On torsion in the cohomology of locally symmetric varieties.
\newblock {\em Ann. of Math. (2)}, 182(3):945--1066, 2015.

\bibitem[SW13]{scholze2013moduli}
Peter Scholze and Jared Weinstein.
\newblock Moduli of {$p$}-divisible groups.
\newblock {\em Camb. J. Math.}, 1(2):145--237, 2013.

\bibitem[SW20]{ScholzeWeinspadicgeometry}
Peter Scholze and Jared Weinstein.
\newblock {\em Berkeley Lectures on p-adic Geometry: (AMS-207)}.
\newblock Princeton University Press, 2020.

\bibitem[Wei13]{WeinsteinModinfinite}
Jared Weinstein.
\newblock Modular curves of infinite level.
\newblock Notes from the Arizona Winter School, 2013.

\bibitem[Wei16]{MR3529120}
Jared Weinstein.
\newblock Semistable models for modular curves of arbitrary level.
\newblock {\em Invent. Math.}, 205(2):459--526, 2016.

\end{thebibliography}

\begin{center}
•
\end{center}

\end{document}